\numberwithin{equation}{section}
\newtheorem{theorem}{Theorem}[section]
\newtheorem{lemma}[theorem]{Lemma}
\newtheorem{corollary}[theorem]{Corollary}
\theoremstyle{definition}
\newtheorem{example}[theorem]{Example}
\newtheorem{remark}[theorem]{Remark}
\newtheorem*{acks}{Acknowledgements}
\theoremstyle{remark}
\newenvironment{romenumerate}[1][0pt]{
\addtolength{\leftmargini}{#1}\begin{enumerate}
 }{\end{enumerate}}
\newcounter{oldenumi}
{\setcounter{oldenumi}{\value{enumi}}
\begin{romenumerate} \setcounter{enumi}{\value{oldenumi}}}
{\end{romenumerate}}
\newcounter{thmenumerate}
\newenvironment{thmenumerate}
{\setcounter{thmenumerate}{0}%
 \def\item{\par
 \refstepcounter{thmenumerate}\textup{(\roman{thmenumerate})\enspace}}
}
{}
\newcounter{xenumerate}   
\newcommand\pfitem[1]{\par(#1):}
\newcounter{step}
\newcommand{\refT}[1]{Theorem~\ref{#1}}
\newcommand{\refC}[1]{Corollary~\ref{#1}}
\newcommand{\refL}[1]{Lemma~\ref{#1}}
\newcommand{\refR}[1]{Remark~\ref{#1}}
\newcommand{\refS}[1]{Section~\ref{#1}}
\newcommand{\refSS}[1]{Section~\ref{#1}}
\newcommand{\refSSS}[1]{Subsection~\ref{#1}}
\newcommand{\refE}[1]{Example~\ref{#1}}
\newcommand{\refand}[2]{\ref{#1} and~\ref{#2}}
\xdef\klockan{\the\count1.0\the\count255}
\xdef\klockan{\the\count1.\the\count255}\fi
\newcommand{\sumj}{\sum_{j=0}^\infty}
\newcommand{\sumji}{\sum_{j=1}^\infty}
\newcommand{\sumk}{\sum_{k=1}^\infty}
\newcommand{\suml}{\sum_{\ell=0}^\infty}
\newcommand{\sumko}{\sum_{1\le k<\infty}}
\newcommand{\sumkoo}{\sum_{1\le k\le\infty}}
\newcommand\set[1]{\ensuremath{\{#1\}}}
\newcommand\bigset[1]{\ensuremath{\bigl\{#1\bigr\}}}
\newcommand\xpar[1]{(#1)}
\newcommand\bigpar[1]{\bigl(#1\bigr)}
\newcommand\Bigpar[1]{\Bigl(#1\Bigr)}
\newcommand\lrpar[1]{\left(#1\right)}
\newcommand\bigsqpar[1]{\bigl[#1\bigr]}
\def\rompar(#1){\textup(#1\textup)}    
\newcommand\Bigparfrac[2]{\Bigpar{\frac{#1}{#2}}}
\def\xexp(#1){e^{#1}}
\newcommand\floor[1]{\lfloor#1\rfloor}
\newcommand\ntoo{\ensuremath{{n\to\infty}}}
\newcommand\bmin{\wedge}
\newcommand\bmax{\vee}
\newcommand\downto{\searrow}
\newcommand\upto{\nearrow}
\newcommand\iid{i.i.d.\spacefactor=1000}    
\newcommand\ie{i.e.\spacefactor=1000}
\newcommand\eg{e.g.\spacefactor=1000}
\newcommand\cf{cf.\spacefactor=1000}
\newcommand{\as}{a.s.\spacefactor=1000}
\newcommand{\aex}{a.e.\spacefactor=1000}
\newcommand\whp{whp}
\newcommand\ii{\mathrm{i}}
\newcommand{\tend}{\longrightarrow}
\newcommand\pto{\overset{\mathrm{p}}{\tend}}
\newcommand\lto{\overset{\mathrm{L^1}}{\tend}}
\newcommand\eqd{\overset{\mathrm{d}}{=}}
\newcommand\bbR{\mathbb R}
\newcommand\bbN{\mathbb N}  
\renewcommand\Re{\operatorname{Re}}
\newcommand\E{\operatorname{\mathbb E{}}}
\renewcommand\P{\operatorname{\mathbb P{}}}
\newcommand\Po{\operatorname{Po}}
\newcommand\ga{\alpha}
\newcommand\gd{\delta}
\newcommand\gam{\gamma}
\newcommand\gk{\kappa}
\newcommand\gl{\lambda}
\newcommand\eps{\varepsilon}
\renewcommand\phi{\varphi}
\newcommand\cA{\mathcal A}
\newcommand\cB{\mathcal B}
\newcommand\cC{\mathcal C}
\newcommand\cE{\mathcal E}
\newcommand\cI{\mathcal I}
\newcommand\cT{{\mathcal T}}
\newcommand\cV{\mathcal V}
\newcommand\ett[1]{\boldsymbol1[#1]}
\newcommand\qq{^{1/2}}
\newcommand\qqw{^{-1/2}}
\newcommand\qw{^{-1}}
\newcommand\qww{^{-2}}
\renewcommand{\=}{:=}
\newcommand\intoi{\int_0^1}
\newcommand\ooo{[0,\infty)}
\newcommand\ooop{(0,\infty)}
\newcommand\dd{\,\mathrm{d}}
\newcommand\rhs{right-hand side}
\newcommand\gnp{\ensuremath{G(n,p)}}
\newcommand\gnm{\ensuremath{G(n,m)}}
\newcommand\xs{\ensuremath{{\mathbf x}_n}}
\newcommand\xss{\ensuremath{(\xs)_{n\ge 1}}}
\newcommand\nv[1]{v_{#1}}
\newcommand\nvn{\nv{n}}
\newcommand\nun{\nu_n}
\newcommand\nuni{\nu_n^1}
\newcommand\vxs{\ensuremath{\mathcal V}}
\newcommand\pij{p_{ij}}
\newcommand\gxxx[2]{\ensuremath{G^\vxs(#1,#2)}}
\newcommand\gnx[1]{\ensuremath{G(n,#1)}}
\newcommand\gnxx[1]{\ensuremath{G^\vxs(n,#1)}}
\newcommand\gnk{\gnx{\kk}}
\newcommand\gnkx{\gnxx{\kk}}
\newcommand\gnkxn{\gnxx{\kk_n}}
\newcommand\gnln{\gnx{\gl/n}}
\newcommand\norm[1]{\ensuremath{\|#1\|}}
\newcommand\normpx[2]{\ensuremath{\|#1\|_{#2}}}
\newcommand\normll[1]{\normpx{#1}{2}}
\newcommand\normoo[1]{\normpx{#1}{\infty}}
\newcommand\ninf[1]{\normoo{#1}}
\newcommand\on[1]{\normpx{#1}{1}}
\newcommand\sus{\chi}
\newcommand\susq{\widehat\chi}
\newcommand\cc[1]{\mathcal C_{#1}}
\newcommand\cci{\cc{i}}
\newcommand\ccc[1]{|\cc{#1}|}
\newcommand\ccci{\ccc{i}}
\newcommand\sumiK{\sum_{i=1}^K}
\newcommand\sumixK{\sum_{i=2}^K}
\newcommand\kk{\kappa}
\newcommand\kkn{\kappa_n}
\newcommand\ka{\kappa}
\newcommand\bpx[1]{\ensuremath{\mathfrak X_{#1}}}
\newcommand\bpk{\ensuremath{\mathfrak X_{\kk}}}
\newcommand\bpkq{\ensuremath{\widehat{\mathfrak X}_{\kk}}}
\newcommand\qbpk{|\bpk|}
\newcommand\bpxx[1]{\ensuremath{\mathfrak X_{#1}(x)}}
\newcommand\bplk{\ensuremath{\mathfrak X_{\glk}}}
\newcommand\qbplk{|\bplk|}
\newcommand\bpglck{\ensuremath{\mathfrak X_{\glc\kk}}}
\newcommand\nx[1]{\ensuremath{N_{#1}}}
\newcommand\nk{\nx{k}}
\newcommand\nj{\nx{j}}
\newcommand\ngek{\nx{\ge k}}
\newcommand\ngeK{\nx{\ge K}}
\newcommand\susk{\sus(\kk)}
\newcommand\susqk{\susq(\kk)}
\newcommand\rhox[1]{\ensuremath{\rho_{#1}}}
\newcommand\rhok{\rhox{k}}
\newcommand\rhoj{\rhox{j}}
\newcommand\rhogek{\rhox{\ge k}}
\newcommand\rhogeK{\rhox{\ge K}}
\newcommand\rhokk{\rhox{\kk}}
\newcommand\rhoka{\rho(\kk)}
\newcommand\rhokkk{\rhox{k}(\kk)}
\newcommand\sss{\mathcal S}
\newcommand\sssq{\sss\times\sss}
\newcommand\sssmu{(\sss,\mu)}
\newcommand\ints{\int_{\sss}}
\newcommand\intss{\int_{\sss^2}}
\newcommand\muq{\widehat\mu}
\newcommand\kkq{\widehat\kk}
\newcommand\kkqq{\widetilde\kk}
\newcommand\kke{\kk_\eps}
\newcommand\kko{\kk_0}
\newcommand\tk{T_{\kk}}
\newcommand\tkq{T_{\kkq}}
\newcommand\Tx[1]{T_{#1}}
\newcommand\Txq[1]{T_{\widehat{#1}}}
\newcommand\Tq[1]{\hat T_{#1}}
\newcommand\tlk{\Tx{\gl\kk}}
\newcommand\tlkq{\Txq{\gl\kk}}
\newcommand\tql{\Tq{\gl}}
\newcommand\tqz{\Tq{z}}
\newcommand\TTx[1]{\widetilde T_{#1}}
\newcommand\ttl{\TTx{\gl}}
\newcommand\tti{\TTx{1}}
\newcommand\innprod[1]{\langle#1\rangle}
\newcommand\biginnprod[1]{\bigl\langle#1\bigr\rangle}
\newcommand\innprodmu[1]{\innprod{#1}_{\mu}}
\newcommand\innprodmuq[1]{\innprod{#1}_{\muq}}
\newcommand\nn{^{(n)}}
\newcommand\musss{\mu(\sss)}
\newcommand\musssqw{\mu(\sss)\qw}
\newcommand\llmu{L^2(\mu)}
\newcommand\qir{irreducible}
\newcommand\op{o_{\mathrm p}}
\newcommand\Thp{\Theta_{\mathrm p}}
\newcommand\glc{\gl_{\mathrm{cr}}}
\newcommand\glo{\gl_{0}}
\newcommand\glk{\gl\kk}
\newcommand\glck{\glc\kk}
\newcommand\extrho{\rho^+}
\newcommand\extrhoz{\extrho_z}
\newcommand\ul{U_\gl}
\newcommand\rholk{\rho_{\glk}}
\newcommand\uqq{(1-\rholk)\qq}
\newcommand\psia{\psi_A}
\newcommand\gla{\gl_A}
\newcommand\oa{O(\norm{A})}
\newcommand\oaa{O(\norm{A}^2)}
\newcommand\ot{O(\norm{T'-T})}
\newcommand\ott{O(\norm{T'-T}^2)}
\newcommand\reps{r_\eps}
\newcommand\ddx{\frac{\dd}{\dd x}}
\newcommand\gi[1]{G_{#1}^{I}}
\newcommand\gii[1]{G_{#1}^{II}}
\newcommand\giii[1]{G_{#1}^{III}}
\newcommand\gin{\gi n}
\newcommand\giin{\gii n}
\newcommand\giiin{\giii n}
\newcommand\glij{\gl_{ij}}
\newcommand\glq{\sqrt\gl}
\newcommand\susqer{\susq_1}
\newcommand\susqd{\susq_\delta}
\newcommand\hx{\hat X}
\newcommand\muqq{\check\mu}
\newcommand\tkqq{T_{\check\ka}}
\newcommand\comp{{\mathrm{c}}}
\newcommand{\Takacs}{Tak\'acs}
\newcommand\ER{Erd\H os--R\'enyi}
\newcommand\HS{Hilbert--Schmidt}
\newcommand\REM[1]{\par{\raggedright\texttt{[#1]}\par\marginal{XXX}}}
\newcommand\ta{{(\tau)}}
\newcommand\dcut{{\delta_{\square}}}
\newcommand\cn[1]{\norm{#1}_{\square}}
\newcommand\tkn{T_{\ka_n}}
\newcommand\tC{{\tilde C}}
\newcommand\tG{{\widetilde G}}
\newcommand\tn{{\tilde n}}
\begin{document}
\title
{Susceptibility in inhomogeneous random graphs}

\author{Svante Janson}
\address{Department of Mathematics, Uppsala University,
 PO Box 480, SE-751 06 Uppsala, Sweden}
\author{Oliver Riordan}
\address{Mathematical Institute, University of Oxford, 
24--29 St Giles', Oxford OX1 3LB, UK}
\date{May 1, 2009}

\subjclass[2000]{05C80, 60C05} 

\begin{abstract} 
We study the susceptibility, i.e., the mean size of the component
containing a random vertex, in a general model of inhomogeneous random
graphs. This is one of the fundamental quantities associated to 
(percolation) phase transitions; in practice one of its main uses
is that it often gives a way of determining the critical point
by solving certain linear equations. Here we relate the susceptibility
of suitable random graphs to a quantity associated
to the corresponding branching process, and study both quantities
in various natural examples.
\end{abstract}

\maketitle

\section{Introduction}\label{S:intro}

The \emph{susceptibility} $\sus(G)$ of a (deterministic or
random) graph $G$ 
is defined as the mean size of the component containing a random
vertex:
\begin{equation}\label{sus1}
 \sus(G)=|G|^{-1}\sum_{v\in V(G)}|\cC(v)|,
\end{equation}
where $\cC(v)$ denotes the component of $G$ containing the vertex $v$.
Thus, if $G$ has $n=|G|$ vertices and 
components $\cci=\cci(G)$, $i=1,\dots, K$, 
where $K=K(G)$ is the number of
components, then
\begin{equation}\label{sus}
  \sus(G)\=\sumiK \frac{\ccci}{n}\ccci
=\frac1n\sumiK \ccci^2.
\end{equation}
Later we shall order the components, assuming as usual
that $\ccc1\ge\ccc2\ge\cdots$.

When the graph $G$ is itself 
random, in some contexts (such as percolation)
it is usual to take the expectation
over $G$ as well as over $v$. Here we do \emph{not} do so: when 
$G$ is random, $\sus(G)$ is a random variable.

\begin{remark}
The term susceptibility comes from physics.
(We therefore use the notation $\sus$, which is standard in physics,
although it usually means something else 
in graph theory.)
The connection with the graph version is through 
(\eg) the Ising model for magnetism and the corresponding
random-cluster model, which is a random graph where the susceptibility
\eqref{sus}, or rather its expectation, corresponds to the magnetic
susceptibility.
\end{remark}

The susceptibility has been much studied for certain models in
mathematical physics. Similarly, in percolation theory, which deals
with certain random infinite graphs, the corresponding quantity is
the (mean) size of the open cluster containing a given vertex, and this
has been extensively studied; see \eg{} \citet{BRbook}.
In contrast, not much rigorous work has been done for finite random
graphs. 
Some results for the \ER{} random graphs \gnp{} and \gnm{} can be
regarded as folk theorems  that have been 
known to experts for a long time.
\citet{Durrett} proves that the expectation
$\E\sus(\gnp)=(1-\gl)\qw+O(1/n)$ if $p=\gl/n$ with $\gl<1$ fixed.
The susceptibility of \gnp{} and \gnm{} is studied in detail
by  \citet{SJ218}.
For other graphs, one rigorous treatment is by
\citet{SW}, who study a class of random graph processes
(including the \ER{} graph process)
and use the susceptibility to study the phase transition in them.

The purpose of the present paper is to study $\sus(\gnkx)$ 
for the inhomogeneous random graph $\gnkx$ introduced in \citet{kernels};
this is a rather general model that includes \gnp{} as a special case.
In fact, much of the time we shall consider the more general setting
of~\cite{cutsub}.  
We review the fundamental definitions from \cite{kernels,cutsub} in
\refS{Sprel} below.

We consider asymptotics as $\ntoo$ and use standard notation such as
$\op$, 
see \eg{} \cite{kernels}. 
All unspecified limits are as \ntoo.

\begin{remark}
  We obtain results for \gnp{} as corollaries to our general
  results, but note that these results are not (and cannot be, because
  of the generality of the model \gnkx) as precise as the results
  obtained by \citet{SJ218}.
The proofs in the two papers are quite different;
the proofs in \cite{SJ218} are based on studying the evolution of the
susceptibility for the random graph process obtained by adding random
edges one by one, using methods from stochastic process theory, while
the present paper is based on the standard branching process
approximation of the neighbourhood of a given vertex. It seems likely
that this method too can be used to give more precise results in the
special case of \gnp, but we have not attempted that. (\citet{Durrett}
  uses this method for the expectation $\E\sus(\gnp)$.)
\end{remark}

The definition \eqref{sus} is mainly interesting in the subcritical
case, when all components are rather small. In the supercritical case,
there is typically one giant component that is so large that it
dominates the sum in \eqref{sus}, and thus $\sus(G)\sim\ccc1^2/n$.
In fact, in the supercritical case of \cite[Theorem 3.1]{kernels}, 
$\ccc1=\Thp(n)$ and $\ccc2=\op(n)$,
and thus 
\begin{equation*}
  \sumiK\ccci^2
=\ccc1^2+O\Bigpar{\ccc2\sumixK\ccci}
=\ccc1^2+O\bigpar{\ccc2 n}
=(1+\op(1))\ccc1^2.
\end{equation*}
(See also \cite[Appendix A]{SJ218} for \gnp.)
In this case, it makes sense to exclude the largest component from the
definition; this is in analogy with percolation theory, where one
studies the mean size of the open cluster containing, say, vertex 0,
given that this cluster is finite.
We thus define the {\em modified susceptibility} $\susq(G)$
of a finite graph
$G$ by
\begin{equation}\label{susq}
  \susq(G)\=\frac1n\sumixK \ccci^2.
\end{equation}
Note that we divide by $n$ rather than by $n-|\cc1|$, which would 
also make sense.

In the uniform case, one interpretation of $\susq(G)$ is that it
gives the rate of growth of the giant component above the critical point.
More generally, if we add a single new edge chosen uniformly
at random to a graph $G$, then the probability that $\cC_i$ becomes joined to $\cC_1$ is 
asymptotically $2\ccci\ccc1/n^2$, and when this happens $\ccc1$ increases
by $\ccci$. Thus (under suitable assumptions), the expected
increase in $\ccc1$ is asymptotically $2\ccc1\sum\ccci^2/n=2\ccc1\susq(G)$.

The results in \cite{kernels} on components of \gnkx{} are based on
approximation by a branching process $\bpk$, see \refS{Sprel}.
We define (at least when $\musss=1$, see \refS{Sprel})
\begin{align}
  \susk&\=\E\qbpk\in[0,\infty],
\\
  \susqk&\=\E\bigpar{\qbpk;\qbpk<\infty}\in[0,\infty].
\end{align}
Thus, $\susk=\susqk$ when the survival probability
$\rhoka\=\P(\qbpk=\infty)=0$ (the subcritical or critical case), 
while
$\susk=\infty\ge\susqk$ when $\rhoka>0$ (the supercritical case).

Our main result is that under some extra conditions, the
[modified] susceptibility of $\gnkx$ converges to $\susk$ [$\susqk$],
see \refS{Smain} and in particular Theorems \refand{Tbounded}{Tiid}.

We also study the behaviour of  $\sus(\glk)$ and
$\susq(\glk)$ as functions of the parameter $\gl\in(0,\infty)$, and in
particular the behaviour at the threshold for existence of a giant
component, see \refS{Sthreshold};
 this provides a way to use the susceptibility to find the
threshold for the random graphs treated here. (See, \eg{}, \citet{Durrett} and
\citet{SW} for earlier uses of this method.)

Finally, we consider some explicit examples and counterexamples in \refS{Sex}.

\begin{remark}
  We believe that similar results hold for the `higher order 
  susceptibilities'
\[ 
\sus_m(G)\=\frac1{|G|}\sum_{v\in V(G)}|\cC(v)|^m
=\frac1{|G|}\sum_i \ccci^{m+1}
,
\]
but we have not pursued this. (For \gnp, see \cite{SJ218}.)
\end{remark}

\begin{acks}
Part of this work was carried out during the programme
``Combinatorics and Statistical Mechanics'' at the Isaac Newton
Institute, Cambridge, 2008, where SJ was supported by a Microsoft
fellowship, and part during a visit of both authors 
to the programme ``Discrete Probability''
at Institut Mittag-Leffler, Djursholm, Sweden, 2009.
\end{acks}

\section{Preliminaries}\label{Sprel}

We review the fundamental definitions from \cite{kernels,cutsub}, but refer to
those papers for details, as well as for references to previous work.
In terms of motivation and applications, our main interest is the model $\gnkx$
of~\cite{kernels}, but for the proofs we sometimes need (or can handle) different generality.

\subsection{The random graph models}

In all variations we start with a measure space $(\sss,\mu)$ with $0<\mu(\sss)<\infty$
(usually, but not always, $\mu$ is a probability measure, \ie,
$\mu(\sss)=1$), and a \emph{kernel} on it, \ie, a symmetric non-negative
measurable function $\kk:\sssq\to\ooo$. 
We assume throughout that $\kk$ is integrable: $\int_{\sss^2}\kk(x,y)\dd\mu(x)\dd\mu(y)<\infty$.

\subsubsection{The general inhomogenous model.}\label{sss_gnkx}
To define $\gnkx$, we assume that we are
given, for each $n\ge1$ (or perhaps for $n$ in another suitable index set
$\cI\subseteq(0,\infty)$), 
a random or deterministic finite sequence 
$\xs=(x_1,x_2,\ldots,x_{\nvn})$ of points in $\sss$.
(For simplicity we write $x_i$ instead of $x\nn_i$.)
We denote the triple $(\sss,\mu,\xss)$ by $\vxs$ and define the
random graph
$G_n=\gnkx$ by first sampling $\xs=(x_1,x_2,\ldots,x_{\nvn})$ and
then, given $\xs$, taking the graph with vertex set
$\set{1,\dots,\nvn}$ and random edges, with edge $ij$ 
present with probability $\min(\kk(x_i,x_j)/n,1)$, independently of 
all other edges. (Alternatively, and almost equivalently, see
\cite{kernels} and \cite{SJ212}, we may use the probability
$1-\exp(-\kk(x_i,x_j)/n)$.) 
We interpret $x_i$ as the \emph{type} of vertex $i$, and call
$(\sss,\mu)$ the \emph{type space}.

We need some technical conditions. In \cite{kernels}, we assume that
$\sss$ is a separable metric space and $\mu$ a Borel measure; we
further assume that if $\nun$ is the (random) measure
$n\qw\sum_{i=1}^{\nvn}\gd_{x_i}$, then 
$\nun \pto\mu$ (with weak convergence of measures); 
in this case $\vxs$ is called a \emph{generalized vertex space}.
In the standard special case when $\nvn=n$ and $\mu(\sss)=1$, $\vxs$
is called a \emph{vertex space}.
Furthermore, in \cite{kernels} it is
assumed that the kernel $\kk$ is \emph{graphical} on
$\vxs$, which 
means that $\kk$ is integrable and \aex{} continuous, and that
the expected number of edges is as expected, \ie, that 
$\E e(\gnkx)/n\to\frac12\int_{\sss^2}\kk$.

Many of the results in \cite{kernels} extend to sequences
$\gnkxn$, where $(\kkn)$ is a sequence of kernels on $\vxs$ that 
is  \emph{graphical on $\vxs$ with limit $\kk$}; 
see \cite{kernels} for the definition and note that this includes the
case when all $\kkn=\kk$ for some graphical kernel $\kk$.

As shown in \cite[Section 8.1]{kernels}, 
if $\vxs$ is a generalized vertex space, we may condition on $\xss$, and
may thus assume that the $\xs$ and, in particular, $v_n$ are
deterministic. Replacing the index $n$ by $v_n$, and renormalizing
appropriately (see \refR{Renorm} below), we may reduce to the case of
a vertex space.

\subsubsection{The \iid\ case.}\label{sss_iid}
Another, often simpler, case of the general model
is when $(\sss,\mu)$ is an arbitrary probability space 
and $(x_1,\dots,x_n)$ are $n$ \iid{} points with
distribution $\mu$; in this case $\kk$ can be any integrable kernel. 
This case was unfortunately not treated in
\cite{kernels}, but corresponding results are shown 
for this case (and in greater generality) in \cite{clustering}.
In this case we call $\vxs=(\sss,\mu,\xss)$ an 
\emph{\iid{} vertex space}.
In this case, to unify the notation, a graphical kernel is thus any
integrable kernel. Many results for this case extend to suitable sequences of kernels,
for example assuming that $\on{\kkn-\kk}\to 0$, as then the general 
setting below applies.

\subsubsection{Cut-convergent sequences}\label{sss_cc}
To define the final variant we shall consider, we briefly recall some definitions.
(A variant of) the Frieze--Kannan~\cite{FKquick} {\em cut norm} of an
integrable function $W:\sss^2\to \bbR$ is simply 
\[
 \sup_{\ninf{f},\,\ninf{g}\le 1} \intss f(x)W(x,y)g(y) \dd\mu(x)\dd\mu(y).
\]
Given an integrable kernel $\ka$ and a measure-preserving bijection $\tau:\sss\to\sss$,
let $\ka^\ta$ be the corresponding {\em rearrangement} of $\ka$, defined by
\[ 
 \ka^\ta(x,y) = \ka(\tau(x),\tau(y)).
\]
We write $\ka\sim \ka'$ if $\ka'$ is a rearrangement of $\ka$.
Given two kernels $\ka$, $\ka'$ on $[0,1]$,
the {\em cut metric} of
Borgs, Chayes, Lov\'asz, S\'os and Vesztergombi~\cite{BCLSV:1}
may be defined by
\begin{equation}\label{cutdef}
 \dcut(\ka,\ka') = \inf_{\ka''\sim \ka'} \cn{\ka-\ka''}.
\end{equation}
There is also an alternative definition via couplings, which also
applies to kernels defined on two different probability spaces;
see~\cite{BCLSV:1,BRmetrics}.

Suppose that $A_n=(a_{ij})$ is an $n$-by-$n$ symmetric matrix with non-negative entries;
from now on any matrix denoted $A_n$ is assumed to be of this form.
Then there is a random graph $G_n=G(A_n)$ naturally associated to $A_n$: the vertex
set is $\{1,2,\ldots,n\}$, edges are present independently, and the probability
that $ij$ is an edge is $\min\{a_{ij}/n,1\}$.
Given $A_n$, there is a corresponding kernel $\ka_{A_n}$ on $[0,1]$ with Lebesgue
measure:
divide $[0,1]^2$ into $n^2$ squares of side $1/n$ in the obvious way, and take
the value of $\ka_{A_n}$ on the $(i,j)$th square to be $a_{ij}$.
Identifying $A_n$ and the corresponding kernel, as shown
in~\cite{cutsub}, many of the results of~\cite{kernels}
apply to $G_n=G(A_n)$ whenever $\dcut(A_n,\ka)\to 0$
for some kernel $\ka$ on $[0,1]$ (or, more generally,
on some probability space $\sss$).

If $A_n$ is itself random, then $G(A_n)$ is defined to have the
conditional distribution just described, given $A_n$. Any results
stating that if $\dcut(A_n,\ka)\to 0$ then $G(A_n)$ has some
property
with probability tending to $1$ 
apply also if $(A_n)$ is random with $\dcut(A_n,\ka)\pto 0$.
(One way to see this is to note that
there is a coupling of the distributions
of the $A_n$ in which $\dcut(A_n,\ka)\to 0$ a.s., and
we may then condition on $(A_n)$.)

Moreover, as shown in~\cite[Sections 1.2 and 1.3]{cutsub}, such
results apply to the models described in the previous
subsections, since in each case
the (random) matrices of edge probabilities obtained 
after conditioning on the vertex types
converge in probability to $\ka$ in $\dcut$.

\subsection{The corresponding branching process}

Given an integrable kernel $\ka$ on a measure space $(\sss,\mu)$,
let $\bpk(x)$, $x\in\sss$, be the multi-type Galton--Watson
branching process defined as follows.
We start with a single particle of type $x$ in generation $0$.
A particle in generation $t$
of type $y$ gives rise to children in generation  
$t+1$ whose types form a Poisson process on $\sss$ with intensity
$\ka(y,z)\dd\mu(z)$.
The children of different particles are independent (given the types
of their parents).

If $\mu$ is a probability measure, we also consider
the branching process $\bpk$ defined as above but
starting with a single particle 
whose type has the distribution  $\mu$. 

Let $|\bpk(x)|$ denote the total population of $\bpk(x)$, and let
\begin{align}
  \rhok(\kk;x)&\=\P(|\bpk(x)|=k),
\qquad k=1,2,\dots,\infty,
\end{align}
and
\begin{align}
 \rhok(\kk)&
\=
\ints\rhok(\kk;x)\dd\mu(x),
\qquad k=1,2,\dots,\infty.
\end{align}
Thus, when $\musss=1$, $ \rhok(\kk)$ is the  probability $\P(|\bpk|=k)$.

For convenience we assume that 
\begin{equation}\label{51}
  \ints\kk(x,y)\dd\mu(y)<\infty
\end{equation}
for all $x\in\sss$; this implies that all sets of children are finite a.s.
This is no real restriction, since our assumption that
$\int_{\sss^2}\kk<\infty$ implies that \eqref{51} holds for
\aex{} $x$, and we may impose \eqref{51} by changing $\kk$ on a null
set, which will \as{} not affect $\bpk$.
(Alternatively, we could work without \eqref{51}, adding the
qualifier ``for \aex{} $x$'' at some places below.)

Since \as{} all generations of $\bpk(x)$ are finite,
it follows that $\rhox\infty(\ka;x)$, the probability that the branching
process is infinite, equals the \emph{survival probability} of
$\bpk(x)$, i.e., the probability that all generations are non-empty.
We use the notation 
$\rho(\kk;x)\=\rhox\infty(\kk;x)$;
for typographical reasons we sometimes also write $\rhokk(x)=\rho(\kk;x)$.
Similarly, 
we write $\rhoka\=\rhox\infty(\kk)$;
if $\musss=1$, this is the survival probability of
$\bpk$. 

We are interested in the analogue of the mean cluster size for the branching processes.
For $\bpk(x)$, we define
\begin{align}
 \sus(\ka;x)
&\=\E\bigpar{|\bpk(x)|}
=\sumkoo k\rhok(\kk;x),
\\
  \susq(\ka;x)
&\=\E\bigpar{|\bpk(x)|; {|\bpk(x)|<\infty}}
=\sumko k\rhok(\kk;x);
\end{align}
thus 
$\sus(\kk;x)=\susq(\kk;x)\le\infty$ 
if $\rho(\kk;x)=0$, and 
$\susq(\ka;x)\le\sus(\ka;x)=\infty$ if $\rho(\kk;x)>0$.
Further, let
\begin{align}
 \sus(\ka)
&\=\musssqw\ints\sus(\ka;x)\dd\mu(x)
=\musssqw\sumkoo k\rhokkk,
\label{suskk}
\\
  \susq(\ka)
&\=\musssqw\ints\susq(\ka;x)\dd\mu(x)
=\musssqw\sumko k\rhokkk.
\label{suskkq}
\end{align}
Thus, if $\musss=1$, 
\begin{align}\label{susqkkb}
 \sus(\ka)
&=\E\bigpar{|\bpk|},
\\
  \susq(\ka)
&=\E\bigpar{|\bpk|; {|\bpk|<\infty}}.
\label{suskkb}
\end{align}

\begin{remark}\label{Renorm}
For a generalized vertex space, where $\mu(\sss)$ may differ from 1, we
may renormalize by replacing 
$\mu$ and $\kk$ by
\begin{align}
  \label{mu'}
\mu'\=\mu(\sss)\qw\mu
\qquad\text{and}\qquad
\kk'\=\mu(\sss)\kk. 
\end{align}
This will not affect $\bpk(x)$, and thus not $\sus(\kk;x)$ and $\susq(\kk;x)$;
further, because of our choice of normalization in \eqref{suskk} and
\eqref{suskkq}, $\sus(\kk)$ and $\susq(\kk)$ also remain unchanged.
Hence, results for generalized vertex spaces follow from the case when
$\mu(\sss)=1$.    
\end{remark}

\subsection{Integral operators}

Given a kernel $\kk$ on a measure space $(\sss,\mu)$,
let $\tk$ be the integral operator on $(\sss,\mu)$ with kernel $\kk$,
defined by
\begin{equation}
  \label{tk}
(\tk f)(x)\=\int_\sss \kk(x,y)f(y)\dd\mu(y),
\end{equation}
for any (measurable) function $f$ such that this integral is defined (finite or
$+\infty$) for \aex{} $x$.
(As usual, we shall assume without comment that all functions
considered are measurable.) 
Note that $\tk f$ is defined for every $f\ge0$, with $0\le\tk f\le\infty$. 

We define
\begin{equation}\label{tnorm}
  \norm{\tk}\=\sup\bigset{\normll{\tk f}: f\ge0,\,\normll{f}\le1}
\le\infty.
\end{equation}
When finite, $\norm{\tk}$ is the norm of $\tk$ as an operator in
$L^2(\sss,\mu)$.
We denote the inner product in
(real) $L^2(\mu)$ by 
$\innprod{f,g}=\innprodmu{f,g}\=\ints fg\dd\mu$,
and the norm by $\normll{f}\=\innprodmu{f,f}\qq$.

One of the results of \cite{kernels} is that the function 
$\rho_\kk(x)=\rho(\kk;x)$ is the unique maximal solution to the
non-linear functional equation
\begin{equation}  \label{phik}
 f = 1-e^{-\tk f},
\qquad f\ge0.
\end{equation}
Moreover, if $\norm\tk\le1$, 
then $\rho_\kk=0$ and thus $\rhoka=0$,
while  if $\norm\tk>1$, 
then $\rho_\kk>0$ on a set of positive measure and thus $\rhoka>0$.
(This extends to generalized vertex spaces by the
renormalization in \refR{Renorm}; note that $\rho_\kk$, $\tk$ and $\norm{\tk}$ 
are not changed by the renormalization.) 
The three cases $\norm\tk<1$, $\norm\tk=1$ and $\norm\tk>1$,
are called \emph{subcritical}, \emph{critical} and
\emph{supercritical}, respectively. 

Given a kernel $\kk$ on a type space $(\sss,\mu)$,
let $\muq$ be the measure on $\sss$ defined by
\begin{equation}
  \label{muq}
\dd\muq(x)\=(1-\rho(\kk;x))\dd\mu(x).
\end{equation}
(This is interesting mainly when $\kk$ is supercritical, since otherwise
$\muq=\mu$.)
The \emph{dual kernel} $\kkq$ is the kernel on $(\sss,\muq)$ that
is equal to $\kk$ as a function.\
We 
regard $\tkq$ as an operator acting on the corresponding 
space $L^2(\muq)$. 
Then $\norm\tkq\le1$; typically $\norm\tkq<1$
when $\kk$ is supercritical,
but equality is possible,
see \cite[Theorem 6.7 and Example 12.4]{kernels}.

Note the explicit formula
\begin{equation}
  \label{tkq}
(\tkq f)(x)
\=\int_\sss \kkq(x,y)f(y)\dd\muq(y)
=\int_\sss \kk(x,y)f(y)(1-\rho(\kk;y))\dd\mu(y),
\end{equation}
\ie, $\tkq f = \tk (f(1-\rhokk))$.
Note also that
\begin{equation}\label{muqs}
  \muq(\sss)=\ints(1-\rho(\kk;x))\dd\mu(x) = \musss-\rho(\kk);
\end{equation}
if $\musss=1$, this is the extinction probability of $\bpk$.

\subsection{Small components}

Let $\nk(G)$ denote the number of vertices in components of order
$k$ in a graph $G$. (Thus the number of such components is
$\nk(G)/k$.)
We can write the definition \eqref{sus} as
\begin{equation}\label{susnk}
  \sus(G)=\frac1{|G|}\sumk \frac{\nk(G)}k k^2=\sumk k\frac{\nk(G)}{|G|}.
\end{equation}

By \cite[Theorem 9.1]{kernels}, 
if $(\kkn)$ is a graphical sequence of kernels on a vertex space 
$\vxs$ with limit $\kk$ and $G_n=\gnxx{\kkn}$, then, for every fixed
$k\ge1$, with $\ngek:=\sum_{j\ge k}\nj$ and 
$\rhogek:=\sum_{k\le j\le\infty}\rhoj$, we have
\begin{equation}
  \label{ngeklim}
\ngek(G_n)/n\pto\rhogek(\kk),
\end{equation}
and thus
\begin{equation}
  \label{nklim}
\nk(G_n)/n\pto\rhok(\kk).
\end{equation}
This extends to generalized vertex spaces by normalization 
(if necessary first conditioning on $\xss$) as discussed in
\cite[Subsection 8.1]{kernels}.
Furthermore, \eqref{nklim} holds also on an \iid{} vertex space 
for a constant sequence $\kkn=\kk$, with $\kk$
integrable, by \cite[Lemma 21]{clustering}.

Even more generally, by~\cite[Lemma 2.11]{cutsub}, the same 
conclusions hold when $G_n=G(A_n)$ with $\dcut(A_n,\ka)\to 0$,
and hence when $G_n=G(A_n)$ with $\dcut(A_n,\ka)\pto 0$;
this implies the two special cases above.

\subsection{The giant component}

If $\kk$ is \qir{} (see \cite{kernels} for the definition), 
then under any of our assumptions we have
\begin{equation}
  \label{giant}
|\cc1(G_n)|/n\pto\rho(\kk)
\end{equation}
and 
\begin{equation}
  \label{2nd}
|\cc2(G_n)|/n\pto 0;
\end{equation}
see \cite[Theorems 3.1 and 3.6]{kernels} or \cite[Theorem 1.1]{cutsub}.

\subsection{Monotonicity}
We note a simple monotonicity for $\sus$; there is no corresponding result for $\susq$.
\begin{lemma}
  \label{Lmon}
If $H$ is a subgraph of $G$ with the same vertex set, then $\sus(H)\le\sus(G)$.
\end{lemma}
\begin{proof}
Immediate from the definition \eqref{sus1}.
\end{proof}

\section{Branching processes}\label{Sbp}

For branching processes, as is well-known, the mean cluster size can
be expressed in terms of the operators $\tk$ and $\tkq$.
We write $1$ for the constant function $1$ on $\sss$. 

\begin{lemma}
  \label{LBP1}
For any integrable kernel
$\kk$ on a type space $(\sss,\mu)$ we have
\begin{align}
  \sus(\ka;x)
&=\sumj\tk^j1(x),
\label{bp1a}
\\
 \sus(\ka)
 &=\musssqw\sumj\ints\tk^j1(x)\dd\mu(x)
=\musssqw\sumj\innprodmu{\tk^j1,1},
\label{bp1b}
\\
  \susq(\ka;x)
&=(1-\rho(\kk;x))\sumj\tkq^j1(x),
\label{bp1c}
\\
 \susq(\ka)
 &=\musssqw\sumj\ints\tkq^j1(x)\dd\muq(x)
=\musssqw\sumj\innprod{\tkq^j1,1}_{\muq}.
\label{bp1d}
\end{align}
\end{lemma}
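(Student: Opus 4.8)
The plan is to compute the mean total population of a multi-type Galton--Watson process generation by generation. Write $Z_t(x)$ for the number of particles in generation $t$ of $\bpk(x)$, so that $|\bpk(x)|=\sum_{t\ge0}Z_t(x)$ and hence $\sus(\ka;x)=\sum_{t\ge0}\E Z_t(x)$ by monotone convergence (all terms are non-negative). The key observation is that the expected number of generation-$(t+1)$ particles, counted with a weight function of their types, is obtained from the generation-$t$ particles by applying $\tk$: if a particle has type $y$, the expected value of $\sum_{\text{children }z} h(z)$ is $\int_\sss \ka(y,z)h(z)\dd\mu(z)=(\tk h)(y)$, by the defining property of a Poisson process with intensity $\ka(y,z)\dd\mu(z)$. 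Iterating and using that generation $0$ is a single particle of type $x$, one gets $\E\bigpar{\sum_{\text{gen-}t\text{ particles of type }z}h(z)}=(\tk^t h)(x)$; taking $h\equiv1$ gives $\E Z_t(x)=\tk^t1(x)$. Summing over $t$ yields \eqref{bp1a}. Then \eqref{bp1b} follows by integrating \eqref{bp1a} against $\musssqw\dd\mu(x)$ and interchanging sum and integral (Tonelli, everything non-negative), recognizing $\ints\tk^j1(x)\dd\mu(x)=\innprodmu{\tk^j1,1}$; and when $\musss=1$ this is $\E|\bpk|$, giving \eqref{susqkkb}.

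For the modified versions I would exploit the standard fact, recorded in \cite{kernels} around \eqref{phik}--\eqref{tkq}, that conditioning $\bpk(x)$ on extinction turns it into the Galton--Watson process associated with the \emph{dual} kernel $\kkq$ on $(\sss,\muq)$ — more precisely, $\P(|\bpk(x)|<\infty)=1-\rho(\kk;x)$, and given extinction the process has the law of $\bpkq(x)$. Hence $\susq(\ka;x)=\E\bigpar{|\bpk(x)|;|\bpk(x)|<\infty}=(1-\rho(\kk;x))\,\E|\bpkq(x)|=(1-\rho(\kk;x))\sum_j\tkq^j1(x)$, the last step being \eqref{bp1a} applied to $\kkq$ in place of $\kk$ (note $\kkq$ is integrable on $(\sss,\muq)$ since $0\le\kkq\le\kk$ and $\muq\le\mu$). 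This is \eqref{bp1c}. Finally \eqref{bp1d} comes from integrating \eqref{bp1c} against $\musssqw\dd\mu(x)$: the factor $(1-\rho(\kk;x))\dd\mu(x)$ is exactly $\dd\muq(x)$ by \eqref{muq}, so $\musssqw\ints\susq(\ka;x)\dd\mu(x)=\musssqw\sum_j\ints\tkq^j1(x)\dd\muq(x)=\musssqw\sum_j\innprod{\tkq^j1,1}_{\muq}$, and when $\musss=1$ this is $\E\bigpar{|\bpk|;|\bpk|<\infty}$, giving \eqref{suskkb}.

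The one genuinely non-routine point is justifying the \emph{many-to-one} identity $\E Z_t(x)=\tk^t1(x)$ rigorously: one must check that the Poisson-intensity construction does give $\E\bigpar{\sum_{\text{children}}h(z)\mid\text{parent type }y}=(\tk h)(y)$ for non-negative $h$ (Campbell's formula for Poisson processes), and then that the conditional-independence of distinct particles' offspring lets one iterate the expectation across generations — a short induction on $t$, conditioning on generation $t$ and summing $\tk h$ over its particles. Assumption \eqref{51} guarantees each generation is a.s.\ finite, so there is no issue with ill-defined sums, and since everything in sight is non-negative all the interchanges of expectation, summation and integration are legitimate by Tonelli/monotone convergence, with no integrability side-conditions needed; the values may be $+\infty$, consistently with the claimed ranges. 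The appeal to \cite{kernels} for the dual-process description of the conditioned tree is the only external input, and it is exactly the content of the discussion surrounding \eqref{phik}--\eqref{tkq}.
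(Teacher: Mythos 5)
Your proposal is correct and follows essentially the same route as the paper: an induction (equivalently, your many-to-one/Campbell computation) giving that the expected size of generation $j$ is $\tk^j1(x)$, summation and Tonelli for \eqref{bp1a}--\eqref{bp1b}, and the standard fact that conditioning $\bpk(x)$ on extinction yields the dual process $\bpkq(x)$ on $(\sss,\muq)$ for \eqref{bp1c}--\eqref{bp1d}. The only cosmetic difference is that the paper phrases the generation-mean identity by conditioning on the first generation rather than on generation $t$, which is the same calculation.
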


\begin{proof}
  Let $f_j(x)$ be the expected size of generation $j$ in $\bpk(x)$.
Then, for every $j\ge0$, by conditioning on the first generation,
\begin{equation*}
  f_{j+1}(x)=\ints f_j(y)\kk(x,y)\dd\mu(y) = \tk f_j(x),
\end{equation*}
and thus, by induction, $f_j=\tk^j f_0 = \tk^j 1$. Hence, \eqref{bp1a}
follows by summing. Recalling the definition \eqref{suskk}, 
relation \eqref{bp1b} follows immediately.

It is easy to see that if we condition $\bpk(x)$ on extinction, we
obtain another similar branching process $\bpkq(x)$ with $\mu$
replaced by $\muq$. 
Hence, $\tk$ is replaced by $\tkq$, and
\eqref{bp1c} follows from 
\begin{equation*}
  \begin{split}
  \E\bigpar{|\bpk(x)|;{|\bpk(x)|<\infty}}
&=(1-\rho(\kk;x))\E\bigpar{|\bpk(x)|\,\bigl|\,{|\bpk(x)|<\infty}}	
\\&
=(1-\rho(\kk;x))\E\bigpar{|\bpkq(x)|}	
  \end{split}
\end{equation*}
and \eqref{bp1a}. 
Finally, \eqref{bp1d} follows by \eqref{suskkq} and integration,
recalling \eqref{muq}.
\end{proof}

Often, it is convenient to assume for simplicity that $\musss=1$.
\begin{lemma}\label{Lsusq}
Let $\ka$ be an integrable kernel on a type space $(\sss,\mu)$ with $\musss=1$. Then
\begin{equation*}
\susq(\kk)
=  
\sumj\innprod{\tkq^j1,1}_{\muq}
=\muq(\sss)\sus(\kkq)
=(1-\rho(\kk))\sus(\kkq).
\end{equation*}  
\end{lemma}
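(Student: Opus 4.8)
The plan is to read off the first equality directly from \refL{LBP1}, and then to identify the resulting series as $\muq(\sss)$ times the branching-process susceptibility of the \emph{dual} kernel $\kkq$, regarded on the type space $(\sss,\muq)$.

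Since $\musss=1$, relation \eqref{bp1d} of \refL{LBP1} becomes $\susq(\kk)=\sumj\innprod{\tkq^j1,1}_{\muq}$, which is the first claimed identity. For the second identity I would apply \refL{LBP1}, specifically \eqref{bp1a}, together with the defining formula \eqref{suskk}, but now to $\kkq$ on $(\sss,\muq)$ in place of $\kk$ on $(\sss,\mu)$. This is legitimate: $\kkq$ is the same function as $\kk$, and $\muq\le\mu$ (since $0\le 1-\rho(\kk;x)\le 1$), so both the integrability of $\kk$ with respect to $\mu\otimes\mu$ and the standing hypothesis \eqref{51} transfer to $\kkq$ on $(\sss,\muq)$; hence $\bpkq(x)$, $\sus(\kkq;x)$ and $\sus(\kkq)$ are well defined and \refL{LBP1} applies. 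Then \eqref{bp1a} gives $\sus(\kkq;x)=\sumj\tkq^j1(x)$, and \eqref{suskk}, read with $(\mu,\kk)$ replaced by $(\muq,\kkq)$, gives
\[
 \sus(\kkq)=\muq(\sss)\qw\ints\sus(\kkq;x)\dd\muq(x)
 =\muq(\sss)\qw\sumj\ints\tkq^j1(x)\dd\muq(x)
 =\muq(\sss)\qw\sumj\innprod{\tkq^j1,1}_{\muq}.
\]
Equivalently, $\sumj\innprod{\tkq^j1,1}_{\muq}=\muq(\sss)\sus(\kkq)$; combined with the first step this yields $\susq(\kk)=\muq(\sss)\sus(\kkq)$. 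Finally, \eqref{muqs} with $\musss=1$ gives $\muq(\sss)=1-\rho(\kk)$, and substituting this produces the last equality in the statement.

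There is no real obstacle here; the only step meriting a sentence is the verification that $\kkq$ on $(\sss,\muq)$ lies within the scope of \refL{LBP1} and of the definitions of \refS{Sprel}, which as noted above is immediate from $\muq\le\mu$. This is also consistent with the observation, already used in the proof of \refL{LBP1}, that conditioning $\bpk(x)$ on extinction produces precisely the branching process $\bpkq(x)$ attached to $\kkq$ on $(\sss,\muq)$.
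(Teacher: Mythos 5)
Your proof is correct and follows essentially the same route as the paper: the first equality is \eqref{bp1d} with $\musss=1$, the second is \eqref{bp1b} (which you rederive via \eqref{bp1a} and \eqref{suskk}) applied to $\kkq$ on $(\sss,\muq)$, and the last is \eqref{muqs}. Your extra remark that integrability and \eqref{51} transfer to $(\sss,\muq)$ because $\muq\le\mu$ is a harmless (and correct) addition that the paper leaves implicit.
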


\begin{proof}
  Use \eqref{bp1d} for $\kk$ and $\mu$
and \eqref{bp1b} for $\kkq$ and $\muq$, together with \eqref{muqs}.
\end{proof}

\begin{theorem}
  \label{TBP1}
Let $\kk$ be an integrable kernel on a type space $(\sss,\mu)$
with $\mu(\sss)=1$.
  \begin{romenumerate}[-10pt]
\item
If $\kk$ is subcritical, i.e., $\norm{\tk}<1$, then
$\sus(\kk;x)=(I-\tk)\qw1$ \aex, and 
$\sus(\kk)=\innprodmu{(I-\tk)\qw1,1}<\infty$.	
\item\label{TBP1b}
Suppose that $\kk$ is supercritical, i.e., $\norm{\tk}>1$,
and also that $\norm{\tkq}<1$.
Then
$\susq(\kk;x)=(1-\rhokk)(I-\tkq)\qw1$ \aex, and 
$\susq(\kk)=\innprodmuq{(I-\tkq)\qw1,1}<\infty$.	
\end{romenumerate}
The conditions of \ref{TBP1b} hold whenever $\norm{\tk}>1$, $\kk$ is \qir, and
$\intss\kk^2<\infty$. 
\end{theorem}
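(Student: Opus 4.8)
The plan is to use the hypothesis $\intss\kk^2<\infty$ only to ensure that $\tkq$ is \emph{compact}, so that — being positivity improving, by irreducibility — it would have a strictly positive eigenfunction if $\norm{\tkq}=1$; one then feeds this eigenfunction into the defining equation \eqref{phik} for $\rhokk$ and reaches a contradiction, which leaves only the possibility $\norm{\tkq}<1$ (we already know $\norm{\tkq}\le1$).

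\emph{Reductions.} Since $0\le1-\rho(\kk;x)\le1$, we have $\intss\kk^2\dd\muq\dd\muq\le\intss\kk^2\dd\mu\dd\mu<\infty$, so $\tkq$ is \HS, hence compact, as an operator on $L^2(\muq)$; by the same Cauchy--Schwarz estimate $\tk1\in\llmu$, so $\tk$ maps $\llmu$ into $\llmu$. By \eqref{51}, $1-\rho(\kk;x)\ge\exp\bigpar{-\ints\kk(x,y)\dd\mu(y)}>0$ for every $x$, so $\muq$ and $\mu$ are equivalent measures; hence $\kkq$ inherits irreducibility from $\kk$, and $\tkq$ is positivity improving on $L^2(\muq)$. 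Finally, since $\norm\tk>1$ we have $\rhokk>0$, i.e.\ $h\=1-\rhokk<1$, on a set of positive measure.

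\emph{The key step.} Suppose $\norm{\tkq}=1$. As $\tkq$ is compact, self-adjoint and positivity improving, Krein--Rutman gives $\psi\in L^2(\muq)$ with $\tkq\psi=\psi$ and $\psi>0$ a.e.; since $h\le1$ we have $h\psi\in\llmu$, so by \eqref{tkq}, $\psi=\tkq\psi=\tk(h\psi)\in\llmu$ as well. Now pair $\tk\rhokk$ against $\psi$ in $L^2(\muq)$: using $\dd\muq=h\dd\mu$, the self-adjointness of $\tk$ on $\llmu$, and $\tk(h\psi)=\psi$,
\begin{equation*}
 \innprodmuq{\tk\rhokk,\psi}=\innprodmu{\tk\rhokk,h\psi}=\innprodmu{\rhokk,\tk(h\psi)}=\innprodmu{\rhokk,\psi}.
\end{equation*}
On the other hand \eqref{phik} with $f=\rhokk$ reads $e^{-\tk\rhokk}=1-\rhokk=h$, i.e.\ $\tk\rhokk=-\log h$, so $\innprodmuq{\tk\rhokk,\psi}=\ints(-h\log h)\psi\dd\mu$. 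Combining the two evaluations and using $\rhokk=1-h$ gives
\begin{equation*}
 \ints\bigpar{1-h+h\log h}\,\psi\dd\mu=\innprodmu{\rhokk,\psi}-\innprodmuq{\tk\rhokk,\psi}=0.
\end{equation*}
But $g(t)\=1-t+t\log t$ has $g(1)=0$ and $g'(t)=\log t<0$ on $(0,1)$, so $g(t)>0$ for $t\in(0,1)$; since $0<h\le1$ with $h<1$ on a set of positive measure, and $\psi>0$ a.e., the integrand is $\ge0$ a.e.\ and $>0$ on a set of positive measure, forcing the integral to be strictly positive — a contradiction. Hence $\norm{\tkq}<1$, which together with $\norm\tk>1$ is exactly the hypothesis of \ref{TBP1b}.

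The only step with real content is the extraction of the positive eigenfunction $\psi$, i.e.\ the Krein--Rutman step: this is precisely where compactness (hence $\intss\kk^2<\infty$) and irreducibility enter, and without the $L^2$ assumption the case $\norm{\tkq}=1$ genuinely occurs (cf.\ \cite[Example 12.4]{kernels}). A minor point to check in passing is that all the pairings above are finite, which follows since $\tk\rhokk\le\tk1\in\llmu$, $\rhokk\in\llmu$, $h\psi\in\llmu$ and $\innprodmuq{\tk\rhokk,\psi}=\innprodmu{\rhokk,\psi}\le\ints\psi\dd\mu<\infty$.
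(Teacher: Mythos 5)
Your proposal proves only the final sentence of the theorem; parts (i) and (ii), which are the main assertions, are never addressed. In the paper they are exactly what the proof establishes: by \refL{LBP1}, $\sus(\kk;x)=\sumj\tk^j1(x)$ and $\susq(\kk;x)=(1-\rho(\kk;x))\sumj\tkq^j1(x)$, and when $\norm\tk<1$ (resp.\ $\norm\tkq<1$) the Neumann series $\sumj\tk^j$ (resp.\ $\sumj\tkq^j$) converges in operator norm on $L^2(\mu)$ (resp.\ $L^2(\muq)$) to $(I-\tk)\qw$ (resp.\ $(I-\tkq)\qw$); since the partial sums increase pointwise, the pointwise and $L^2$ limits agree \aex, giving the stated identities, and finiteness follows because the resolvent is bounded and $1\in L^2$. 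This step is short, but without it you have only verified that the hypotheses of (ii) follow from those of the last sentence, not the conclusions of (i) and (ii) themselves; as a proof of the theorem as stated, that is a genuine omission.

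For the last sentence your argument is correct and takes a genuinely different route: the paper simply cites \cite[Theorem 6.7]{kernels} for $\norm\tkq<1$, whereas you give a self-contained proof. The reductions ($\tkq$ is \HS{} on $L^2(\muq)$ because $\muq\le\mu$; $\muq$ and $\mu$ are equivalent because $1-\rho(\kk;x)\ge e^{-\tk1(x)}>0$ by \eqref{51}; $\rho(\kk;x)>0$ on a set of positive measure since $\norm\tk>1$), the two evaluations of $\innprod{\tk\rhokk,\psi}_{\muq}$ via $\tk\rhokk=-\log(1-\rhokk)$ from \eqref{phik}, and the strict positivity of $1-t+t\log t$ on $(0,1)$ all check out, as do your finiteness remarks. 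Two small repairs: irreducibility does not make $\tkq$ positivity improving (a single application need not spread supports over all of $\sss$), so the Krein--Rutman phrasing is not quite right; what you actually use --- an \aex{} positive eigenfunction of the compact self-adjoint operator $\tkq$ for the eigenvalue $\norm\tkq$ --- is true for irreducible kernels and is \cite[Lemma 5.15]{kernels}, which this paper invokes in the proofs of Theorems \refand{Tcritical}{Tsub}. Also, you import $\norm\tkq\le1$ from the preliminaries; that is legitimate, but your identity in fact excludes any eigenvalue $\ge1$, so applying the lemma to the eigenvalue $\norm\tkq$ would make this step independent of that import. The trade-off: the paper's citation is instant and immediately yields the remark following the theorem (compactness of $\tk$ suffices), while your argument is longer but explicit; note it too survives under mere compactness, since $\tkq$ is unitarily equivalent to $f\mapsto(1-\rhokk)^{1/2}\tk\bigpar{(1-\rhokk)^{1/2}f}$ on $L^2(\mu)$ and hence compact whenever $\tk$ is.
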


\begin{proof}
An immediate consequence of \refL{LBP1}, since in these cases 
the sums $\sumj\tk^j=(I-\tk)\qw$ and $\sumj\tkq^j=(I-\tkq)\qw$,
respectively, converge as operators on $L^2(\mu)$ and $L^2(\muq)$.
For the final statement we use \cite[Theorem 6.7]{kernels}, which
yields $\norm{\tkq}<1$.
\end{proof}

In fact, for the last part one can replace the assumption
that $\intss \ka^2<\infty$ by the weaker assumption that $\tk$
is compact; this
is all that is used in the proof of \cite[Theorem 6.7]{kernels}.

In the critical case, when $\norm\tk=1$, we have
$\sus(\kk)=\susq(\kk)$.
We typically expect the common value to be infinite,
but there are exceptions;
see \refSS{SSCHKNS}.

\begin{theorem}
  \label{Tcritical}
  \begin{thmenumerate}
\item
If $\kk$ is critical and $\tk$ is a compact operator on $\llmu$, then
$\sus(\kk)=\infty$. 
In particular, this applies if
$\int_{\sss^2}\kk(x,y)^2\dd\mu(x)\dd\mu(y)<\infty$. 
\item
 If $\kk$ is supercritical, then $\sus(\kk)=\infty$.
  \end{thmenumerate}
\end{theorem}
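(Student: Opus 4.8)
The plan is to handle the two parts separately, using \refL{LBP1} to reduce everything to a statement about the series $\sumj \innprodmu{\tk^j 1,1}$ or about powers of $\tk$ applied to $1$.

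For part (i), suppose $\kk$ is critical, so $\norm{\tk}=1$, and $\tk$ is compact on $\llmu$. A compact positive self-adjoint operator with $\norm{\tk}=1$ has $1$ as an eigenvalue (the spectral radius is attained), so there is a nonzero $f\ge 0$ (the Perron eigenfunction; we may take $f\ge 0$ since $\tk$ is a positivity-preserving operator and one can argue via the spectral decomposition that the top eigenvalue has a nonnegative eigenfunction) with $\tk f = f$. The idea is then to bound $\sus(\kk)$ from below by testing the series against $f$ rather than against $1$. More precisely, from $\sus(\kk;x) = \sumj \tk^j 1(x) \ge 0$ and the fact that $\tk$ preserves positivity, I would compare $\innprodmu{\tk^j 1, f} = \innprodmu{1,\tk^j f} = \innprodmu{1,f}$ for every $j$; since $\innprodmu{1,f}>0$ (as $f\ge 0$ is nonzero and, being an eigenfunction of an irreducible-like positive operator, is positive on a set of positive measure, so $\innprodmu{1,f}>0$), the partial sums $\sum_{j=0}^{N}\innprodmu{\tk^j 1,f} = (N+1)\innprodmu{1,f}\to\infty$. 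But $\sum_{j=0}^N \tk^j 1 \le \sus(\kk;\cdot)$ pointwise (all terms nonnegative), so $\innprodmu{\sus(\kk;\cdot),f} \ge (N+1)\innprodmu{1,f}$ for all $N$, forcing $\innprodmu{\sus(\kk;\cdot),f}=\infty$, hence $\sus(\kk;x)=\infty$ on a set of positive measure, hence $\sus(\kk)=\infty$ by \eqref{suskk}. The special case $\intss \kk^2<\infty$ follows because then $\tk$ is \HS{}, hence compact.

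For part (ii), suppose $\kk$ is supercritical, i.e.\ $\norm{\tk}>1$. The cleanest route is to use $\sus(\kk)=\infty$ whenever $\rho(\kk)>0$: indeed by \eqref{suskk}, $\sus(\kk)=\musssqw\sumkoo k\rhokkk$ and the $k=\infty$ term contributes $\infty\cdot\rhox\infty(\kk)=\infty$ as soon as $\rhox\infty(\kk)=\rho(\kk)>0$. And $\norm{\tk}>1$ gives $\rho_\kk>0$ on a set of positive measure, hence $\rho(\kk)>0$, by the result of \cite{kernels} quoted around \eqref{phik}. (In the $\musss=1$ normalization this is just the statement that $\sus(\kk)=\E|\bpk|$ and $|\bpk|=\infty$ with positive probability.) So part (ii) is essentially immediate; one should just be slightly careful that the renormalization of \refR{Renorm} means it suffices to treat $\musss=1$, which is the stated hypothesis anyway.

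The main obstacle is part (i): specifically, producing a nonnegative (ideally positive-on-a-positive-measure-set) eigenfunction $f$ for the top of the spectrum of the compact positive operator $\tk$, and knowing $\innprodmu{1,f}>0$. Without irreducibility one cannot expect $f>0$ everywhere, but one can still argue that the spectral projection onto the eigenspace for eigenvalue $\norm{\tk}$ maps the nonnegative cone to itself (since $\tk$ does and the projection is a limit of polynomials in $\tk$ with nonnegative behavior on the cone after suitable manipulation — or, more simply, restrict to the support of $\kk$'s relevant component), yielding a nonzero $f\ge 0$; and then $\innprodmu{1,f}>0$ automatically because $f\ge0$ is nonzero. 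An alternative that sidesteps eigenfunctions entirely: if $\sus(\kk)<\infty$ then $\sumj\innprodmu{\tk^j1,1}<\infty$, so in particular $\tk^j 1\to 0$ in a weak-ish sense; combined with compactness and $\norm{\tk}=1$ one can derive a contradiction by extracting an approximate eigenvector. I would present the eigenfunction argument as the main line and mention the alternative only if the eigenfunction extraction turns out to need irreducibility hypotheses not available here.
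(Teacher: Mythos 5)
Part (ii) of your proposal is correct and is exactly the paper's argument: supercriticality gives $\rho(\kk)>0$ (quoting \cite{kernels}), and then the $k=\infty$ term in \eqref{suskk}, equivalently $\P(\qbpk=\infty)>0$ in \eqref{susqkkb}, forces $\sus(\kk)=\infty$.

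For part (i) your route is genuinely different from the paper's (which passes to $\tk^2$ to dodge possible eigenvalue $-1$, expands $1$ along the eigenspace of $\tk^2$ for the eigenvalue $1$, and shows $\innprodmu{\tk^{2n}1,1}$ converges to a strictly positive limit), and it has a genuine gap at the final step. From $\innprodmu{\tk^j1,f}=\innprodmu{1,f}>0$ for all $j$ you correctly deduce $\innprodmu{\sus(\kk;\cdot),f}=\infty$. But this does \emph{not} imply that $\sus(\kk;x)=\infty$ on a set of positive measure, nor that $\ints\sus(\kk;x)\dd\mu(x)=\infty$: the eigenfunction $f$ is only known to lie in $L^2(\mu)$ and may be unbounded, and the product of an a.e.-finite $L^1$ function with an a.e.-finite $L^2$ function need not be integrable (take $g(x)=x^{-3/4}$ and $f(x)=x^{-1/4}$ on $(0,1)$ with Lebesgue measure). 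Since the theorem assumes only compactness of $\tk$, you cannot assume $f$ is bounded. The gap is easily closed, and closing it essentially recovers the paper's proof: by self-adjointness and the \CSineq,
\begin{equation*}
  \innprodmu{\tk^{2j}1,1}=\normll{\tk^j1}^2
  \ge \frac{\innprodmu{\tk^j1,f}^2}{\normll{f}^2}
  = \frac{\innprodmu{1,f}^2}{\normll{f}^2}>0,
\end{equation*}
so the even-indexed terms of the non-negative series $\sumj\innprodmu{\tk^j1,1}$ in \eqref{bp1b} are bounded away from $0$, the series diverges, and $\sus(\kk)=\infty$. Your closing worries about producing the non-negative eigenfunction are unnecessary: the existence of a non-zero $f\ge0$ with $\tk f=f$ for a compact critical $\tk$ is precisely what the paper takes from \cite[Lemma 5.15]{kernels} (whose proof uses only compactness); the ``spectral projection preserves the cone'' sketch you offer is not a proof, but you do not need it.
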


\begin{proof}
\pfitem{i}
If $\int_{\sss^2}\kk^2<\infty$, then $\tk$ is a \HS{}
operator and thus compact. 

$\tk$ is always self-adjoint (when it is bounded), 
so if $\tk$ is compact and critical, then it has 
an eigenfunction $\psi$ with eigenvalue $\norm\tk=1$; moreover,
the eigenspace has finite dimension and there is at least one such
eigenfunction $\psi_1\ge0$ (with $\normll{\psi_1}=1$, say), 
see Lemma 5.15 in \cite{kernels} and its proof, where only compactness is used.
There may also be eigenfunctions with eigenvalue $-1$, so we consider
the positive compact operator
$\tk^2$ and let $\psi_1,\dots,\psi_m$ be an orthonormal basis of the eigenspace
for the eigenvalue 1 of $\tk^2$. The orthogonal complement is also
invariant, and $\tk^2$ acts there with norm $R<1$. Hence,
\begin{equation*}
  \innprod{\tk^{2n}1,1}
=\sum_{i=1}^m \innprod{1,\psi_i}^2 + O(R^n)
\to \sum_{i=1}^m \innprod{1,\psi_i}^2.
\end{equation*}
Since the terms in the sum are non-negative and
$\innprod{1,\psi_1}=\int\psi_1\dd\mu>0$, the limit is strictly
positive and thus 
$\sumj\innprod{\tk^j1,1}$ cannot converge. Since the terms in this sum
are non-negative, \eqref{bp1b} yields
$\sus(\kk)=\musssqw\sumj\innprod{\tk^j1,1}=\infty$.

\pfitem{ii} 
By \cite[Theorem 6.1]{kernels} we have $\P(|\bpk|=\infty)=\rho(\kk)>0$,
so $\sus(\kk)=\infty$.
\end{proof}

In the subcritical case, we can
find $\sus(\kk)$ by finding $(I-\tk)\qw1$, \ie{}, by solving the
integral equation $f=\tk f+1$. Actually, we can do this for any $\kk$,
and can use this as a test of whether $\sus(\kk)<\infty$.

\begin{theorem}\label{Teq}
Let $\kk$ be a kernel on a type space $\sssmu$. Then 
the following are equivalent:
  \begin{romenumerate}
\item
$\sus(\kk)<\infty$.
\item
There exists a function $f\ge0$ in $L^1(\mu)$ such that (\aex)
\begin{equation}
  \label{em}
f = Tf + 1.
\end{equation}
\item
There exists a function $f\ge0$ in $L^1(\mu)$  such that (\aex)
\begin{equation}
  \label{emw}
f \ge Tf + 1.
\end{equation}
  \end{romenumerate}
When the above conditions hold, there is a smallest non-negative solution $f$ to \eqref{em},
that is also a smallest non-negative solution to \eqref{emw}; this
minimal solution $f$ equals $\sus(\kk;x)$, and thus 
$\sus(\kk)=\musssqw\ints f\dd\mu$.
\end{theorem}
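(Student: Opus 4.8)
The plan is to establish the cycle of implications (i) $\Rightarrow$ (ii) $\Rightarrow$ (iii) $\Rightarrow$ (i), and then argue the minimality assertion. For (i) $\Rightarrow$ (ii), recall from \refL{LBP1} that $\sus(\kk;x)=\sumj\tk^j1(x)$, and that $\sus(\kk)=\musssqw\ints\sus(\kk;x)\dd\mu(x)<\infty$ means $\sus(\kk;\cdot)\in L^1(\mu)$. Setting $f(x)\=\sus(\kk;x)$, we have $f\ge 1\ge 0$ (the $j=0$ term), and applying $\tk$ termwise to the nonnegative series (monotone convergence justifies interchanging $\tk$ with the sum) gives $\tk f=\sumj\tk^{j+1}1=f-1$, so $f=\tk f+1$ \aex. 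The implication (ii) $\Rightarrow$ (iii) is trivial since equality implies the inequality.

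The substantive implication is (iii) $\Rightarrow$ (i): given $f\ge 0$ in $L^1(\mu)$ with $f\ge\tk f+1$ \aex, I want to conclude $\sus(\kk)<\infty$, and in fact that $f\ge\sus(\kk;\cdot)$. The key is an iteration/monotonicity argument: from $f\ge\tk f+1$ and the fact that $\tk$ preserves the pointwise order on nonnegative functions, one shows by induction that $f\ge\sum_{j=0}^{m}\tk^j1$ for every $m$. Indeed, the base case $m=0$ is $f\ge 1$, which follows from $f\ge\tk f+1\ge 1$; and if $f\ge\sum_{j=0}^{m}\tk^j1$, then $f\ge\tk f+1\ge\tk\bigpar{\sum_{j=0}^{m}\tk^j1}+1=\sum_{j=0}^{m+1}\tk^j1$. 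Letting $m\to\infty$ and using monotone convergence, $f(x)\ge\sumj\tk^j1(x)=\sus(\kk;x)$ \aex. Since $f\in L^1(\mu)$, this forces $\sus(\kk;\cdot)\in L^1(\mu)$, i.e. $\sus(\kk)=\musssqw\ints\sus(\kk;x)\dd\mu(x)<\infty$.

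Finally, the minimality statement falls out of the pieces already assembled. The function $f_0(x)\=\sus(\kk;x)$ solves \eqref{em} (shown in (i) $\Rightarrow$ (ii)), hence also \eqref{emw}, and lies in $L^1(\mu)$ when the conditions hold; and the argument for (iii) $\Rightarrow$ (i) shows that \emph{any} nonnegative $L^1$ solution of the weaker inequality \eqref{emw} — a fortiori of \eqref{em} — dominates $f_0$ pointwise \aex. Thus $f_0=\sus(\kk;\cdot)$ is simultaneously the smallest nonnegative solution of \eqref{em} and of \eqref{emw}, and $\sus(\kk)=\musssqw\ints f_0\dd\mu$ as claimed. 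The one technical point to handle carefully throughout is that $\tk$ need not be a bounded operator on $L^2$ or $L^1$ here (the theorem makes no finiteness hypothesis on $\norm{\tk}$); but since every function in sight is nonnegative, $\tk$ always makes sense as a map into $[0,\infty]$-valued functions, the order-preservation $g\le h\Rightarrow \tk g\le\tk h$ is immediate, and all interchanges of $\tk$ with increasing limits are justified by monotone convergence. That is the main thing to get right; there is no hard analytic obstacle beyond this bookkeeping.
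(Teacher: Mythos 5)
Your proposal is correct and follows essentially the same route as the paper: take $g(x)=\sus(\kk;x)=\sumj\tk^j1(x)$ as the candidate minimal solution, verify $\tk g=g-1$ by monotone convergence, and for the converse use induction plus order-preservation of $\tk$ on non-negative functions to show any non-negative $L^1$ solution of \eqref{emw} dominates $g$ pointwise. The only cosmetic difference is that the paper's induction carries the extra term $\tk^n f$ (i.e.\ $f\ge\sum_{j=0}^{n-1}\tk^j1+\tk^nf$) before discarding it, whereas you prove the truncated inequality directly; this changes nothing of substance.
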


\begin{proof}
Recalling \eqref{bp1a}, let $g(x)\=\sus(\kk;x)=\sumj\tk^j 1(x)$; 
this is a function $\sss\to[0,\infty]$ with
$\tk g=\sumji\tk^j1=g-1$, so $g$ satisfies both  \eqref{em} and
\eqref{emw}. 
Further, $\ints g\dd\mu=\musss \sus(\kk)$ by \eqref{bp1b}.
Hence, if (i) holds, 
then $g\in L^1(\mu)$; 
consequently, $g$ satisfies (ii) and (iii).
(Note that then $g$ is finite a.e.)

Conversely, if $f\ge0$ solves \eqref{em} or \eqref{emw}, then, by
induction,
\begin{equation*}
  f \ge \sum_{j=0}^{n-1} \tk^j 1 + \tk^n f
\end{equation*}
for every $n\ge1$.
Thus $f \ge \sum_{j=0}^{n-1} \tk^j 1$, and 
letting \ntoo{} yields $f\ge g$. Hence, if (ii) or
(iii) holds, then $g\in L^1(\mu)$, and (i) holds. Further, in this
case, $f\ge g$, which shows that $g$ is the smallest solution in both
(ii) and (iii), completing the proof.  
\end{proof}

Note that in the subcritical case, 
\eqref{em} always has a solution in $L^2(\mu)$; \cf{} \refT{TBP1}.
In \refSS{SSCHKNS}, we give an example where $\kk$ is critical and
\eqref{em} has a solution that belongs to $L^1(\mu)$, but
not to $L^2(\mu)$. 
(We do not know whether there can be a non-negative solution in 
$L^2(\mu)$ with $\kk$ critical.)
Moreover, in this example, both in subcritical and
critical cases, there is more than one non-negative solution in $L^1(\mu)$.
However, we can show that there is never more than one non-negative solution in $L^2(\mu)$.
\begin{corollary}\label{Ceq}
Suppose that there exists a function $f\ge0$ in $L^2(\mu)$ such that 
\eqref{em} holds. Then $f$ is the unique non-negative solution  to
\eqref{em} in $L^2(\mu)$,
$\sus(\kk;x)=f(x)$ and 
$\sus(\kk)=\musssqw\ints f\dd\mu$.  
\end{corollary}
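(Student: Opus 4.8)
The plan is to deduce everything from \refT{Teq} together with one dominated-convergence estimate. Since $\mu$ is a finite measure, $L^2(\mu)\subseteq L^1(\mu)$, so the hypothesised $f$ is a non-negative $L^1(\mu)$ solution of \eqref{em}; by \refT{Teq} this forces $\sus(\kk)<\infty$, and the function $g(x)\=\sus(\kk;x)=\sumj\tk^j1(x)$ is the smallest non-negative solution of \eqref{em}, so that $0\le g\le f$ \aex{} and, by \eqref{bp1b}, $\ints g\dd\mu=\musss\,\sus(\kk)$. In particular $g$ is finite \aex, $g\le f\in L^2(\mu)$ gives $g\in L^2(\mu)$, and $fg\le f^2\in L^1(\mu)$. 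All the claimed conclusions follow once we show $f=g$ \aex; the uniqueness is then automatic, since the same argument applies to \emph{any} non-negative $L^2(\mu)$ solution of \eqref{em}.

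To prove $f=g$, I would first iterate \eqref{em}: using that $\tk$ is additive on non-negative functions and that $\tk f=f-1$ is finite \aex, one gets $f=\tk^nf+\sum_{j=0}^{n-1}\tk^j1$ for every $n\ge1$, hence $\tk^nf=f-\sum_{j=0}^{n-1}\tk^j1\downto f-g$ \aex{} as \ntoo. Since $0\le\tk^nf\le f\in L^1(\mu)$, dominated convergence gives $\ints\tk^nf\dd\mu\to\ints(f-g)\dd\mu$. On the other hand, Tonelli's theorem (all integrands are non-negative) together with the symmetry of $\kk$ yields the ``self-adjointness'' identity $\ints\tk^nf\dd\mu=\ints f\cdot\tk^n1\dd\mu$. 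Here $0\le f\cdot\tk^n1\le fg\le f^2\in L^1(\mu)$, while $\tk^n1\to0$ \aex, since it is the general term of the \aex-convergent series $\sumj\tk^j1=g$. Hence $\ints f\cdot\tk^n1\dd\mu\to0$ by dominated convergence, so $\ints(f-g)\dd\mu=0$; as $f-g\ge0$ \aex{} this forces $f=g=\sus(\kk;\cdot)$ \aex, and then $\sus(\kk)=\musssqw\ints g\dd\mu=\musssqw\ints f\dd\mu$.

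The argument is essentially routine; the only points needing care are bookkeeping ones, namely justifying the manipulations with $\tk$ when $\tk$ is not assumed bounded on $L^2(\mu)$ — the iteration identity (where one first notes that $f=\tk^nf+(\text{nonneg})$ with $f$ finite \aex{} makes $\tk^nf$ finite \aex), and the identity $\ints\tk^nf\dd\mu=\ints f\,\tk^n1\dd\mu$ — each of which is legitimate precisely because every function involved is non-negative, so Tonelli applies, and because $g$ and $\tk f$ are finite \aex. I do not expect a genuine obstacle: the conceptual crux is just the observation that $\tk^n1\to0$ \aex{} as soon as $\sus(\kk;\cdot)=\sumj\tk^j1<\infty$ \aex, which makes the ``tail'' $f\cdot\tk^n1$ in $\ints\tk^nf\dd\mu$ vanish in the limit and thereby pins $f$ down to the minimal solution $g$. (When $\tk$ happens to be bounded with $\norm{\tk}<1$ one could instead note that $h\=f-g\ge0$ satisfies $\tk h=h$, so $\normll h\le\norm{\tk}\normll h$ gives $h=0$; but this does not cover the critical or unbounded cases, whereas the argument above does.)
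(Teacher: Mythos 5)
Your proof is correct, but it takes a different route from the paper's. The paper's argument is a one-step inner-product computation: with $g$ the minimal solution from \refT{Teq}, set $h\=f-g\ge0$; then $0\le h\le f$ gives $h\in L^2(\mu)$, the equation gives $\tk h=h$, and
\begin{equation*}
\innprod{f,h}=\innprod{\tk f+1,h}=\innprod{f,\tk h}+\innprod{1,h}=\innprod{f,h}+\innprod{1,h},
\end{equation*}
where $\innprod{f,h}<\infty$ by \CS{}, so $\innprod{1,h}=0$ and $h=0$ \aex. Both proofs rest on the same two pillars — the minimal solution $g=\sus(\kk;\cdot)$ from \refT{Teq}, and the self-adjointness identity $\ints(\tk u)v\dd\mu=\ints u(\tk v)\dd\mu$ for non-negative $u,v$ (Tonelli plus symmetry of $\kk$) — but you deploy the second pillar $n$ times, iterating \eqref{em} and passing to the limit, whereas the paper applies it once to the fixed-point equation $\tk h=h$. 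The paper's version is shorter; yours is more transparent about the mechanism (it is the \aex{} convergence $\tk^n1\to0$, forced by convergence of $\sumj\tk^j1$, that kills the excess $f-g$), at the cost of more bookkeeping. Note that the $L^2$ hypothesis enters both arguments in an essential and analogous way: in the paper it makes $\innprod{f,h}$ finite so it can be cancelled, and in yours it provides the integrable dominating function $f^2\ge f\cdot\tk^n1$. Your justifications of the iteration identity and of the Tonelli manipulations for a possibly unbounded $\tk$ are the right points to flag, and they are all in order.
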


\begin{proof}
Let $g$ be the smallest non-negative solution, guaranteed to exist
by \refT{Teq}, and let $h=f-g\ge0$. Since $0\le h\le f$, $h\in L^2(\mu)$.
Then $Th=Tf-Tg=(f-1)-(g-1)=h$, and
  \begin{equation*}
	\begin{split}
\innprod{f,h}
=\innprod{Tf+1,h}	  
=\innprod{Tf,h}+\innprod{1,h}	  
=\innprod{f,Th}+\innprod{1,h}	  
=\innprod{f,h}+\innprod{1,h}.
	\end{split}
  \end{equation*}
Hence $0=\innprod{1,h}=\int h\dd\mu$, so $h=0$ \aex, and $f=g$.	  
\end{proof}

\section{Main results}\label{Smain}

We begin with a general asymptotic lower bound for the susceptibility. 
This bound depends only on convergence of the number of vertices in components
of each fixed size, so it applies under any of the assumptions described
above. More precisely, we state the results in the setting of \refSSS{sss_cc}; as noted
there they then apply (by conditioning) to $\gnkxn$ under the assumptions
in \refSSS{sss_gnkx} or \refSSS{sss_iid}.
As usual, we say that $G_n$ has a certain property 
{\em with high probability}, or {\em \whp{}}, if the probability 
that $G_n$ has this property tends to 1 as $n\to\infty$.

Recall that a matrix denoted $A_n$ is assumed to be symmetric, $n$-by-$n$ and
to have non-negative entries.
\begin{theorem}
  \label{Tlower}
Let $\ka$ be a kernel and $(A_n)$ a sequence of (random)
matrices with $\dcut(A_n,\ka)\pto 0$, 
and set $G_n=G(A_n)$. 
Alternatively, let $G_n=\gnkxn$ satisfy the assumptions
of \refSSS{sss_gnkx} or \refSSS{sss_iid}. Then,
\begin{romenumerate}
  \item
for every $b<\sus(\kk)$, \whp{} $\sus(G_n)>b$, and
  \item
for every $b<\susq(\kk)$, \whp{} $\susq(G_n)>b$.
\end{romenumerate}
Moreover, $\liminf\E\sus(G_n)\ge\sus(\kk)$ and
$\liminf\E\susq(G_n)\ge\susq(\kk)$.
\end{theorem}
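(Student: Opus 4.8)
The plan is to deduce the whole theorem from the two convergence statements recalled above: for each fixed $k$, \eqref{nklim} gives $\nk(G_n)/n\pto\rhok(\kk)$ and \eqref{ngeklim} gives $\ngek(G_n)/n\pto\rhogek(\kk)$, and both hold in every setting covered by the theorem (for $G(A_n)$ with $\dcut(A_n,\kk)\pto0$ by \cite[Lemma~2.11]{cutsub}, which as noted above also subsumes the vertex-space and \iid{} cases by conditioning on the vertex types). First I would reduce to $\musss=1$: for a generalized vertex space this is the standard conditioning-and-renormalising argument of \cite[Section~8.1]{kernels}, under which neither $\sus(\kk)$ nor $\susq(\kk)$ is affected (see \refR{Renorm}). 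Then, by \eqref{suskk}--\eqref{suskkq}, $\sus(\kk)=\sumkoo k\,\rhok(\kk)$ and $\susq(\kk)=\sumko k\,\rhok(\kk)$, where the $k=\infty$ term in the first sum equals $\infty$ when $\rho(\kk)>0$ and $0$ otherwise.

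For part (i), use $\sus(G_n)=\sumk k\,\nk(G_n)/n$ from \eqref{susnk}, fix $b<\sus(\kk)$, and split into two cases. If $\sumko k\,\rhok(\kk)>b$, choose $K$ with $\sum_{k=1}^{K}k\,\rhok(\kk)>b$; dropping all components of size $>K$ gives $\sus(G_n)\ge\sum_{k=1}^{K}k\,\nk(G_n)/n$, which by \eqref{nklim} converges in probability to $\sum_{k=1}^{K}k\,\rhok(\kk)>b$, so \whp{} $\sus(G_n)>b$. Otherwise $\sumko k\,\rhok(\kk)\le b<\sus(\kk)$, which is only possible if $\rho(\kk)>0$; then choose $K$ with $K\rho(\kk)>b$ and use $\sus(G_n)\ge K\,\ngeK(G_n)/n$, which by \eqref{ngeklim} converges in probability to $K\,\rhogeK(\kk)\ge K\rho(\kk)>b$, again giving \whp{} $\sus(G_n)>b$.

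Part (ii) is analogous, but now the largest component must be discarded. Using $\susq(G_n)=\frac1n\sum_{i\ge2}\ccci^2$ from \eqref{susq}, fix $b<\susq(\kk)=\sumko k\,\rhok(\kk)$ and choose $K$ with $\sum_{k=1}^{K}k\,\rhok(\kk)>b$. Since $\sum_{i:\,\ccci\le K}\ccci^2=\sum_{k=1}^{K}k\,\nk(G_n)$, keeping only components of size $\le K$ yields
\[
\susq(G_n)\ \ge\ \frac1n\sum_{i\ge2,\,\ccci\le K}\ccci^2\ =\ \frac1n\sum_{k=1}^{K}k\,\nk(G_n)\ -\ \frac{\ccc1^2}{n}\,\ett{\ccc1\le K}\ \ge\ \frac1n\sum_{k=1}^{K}k\,\nk(G_n)\ -\ \frac{K^2}{n}.
\]
The point is that on the event $\{\ccc1\le K\}$ the discarded term is at most $K^2/n=o(1)$, so no control on the size of the giant component is needed. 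By \eqref{nklim} the right-hand side converges in probability to $\sum_{k=1}^{K}k\,\rhok(\kk)>b$, so \whp{} $\susq(G_n)>b$.

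For the two $\liminf$ bounds I would use the elementary fact that if $X_n\ge0$ and $\P(X_n>b)\to1$ for every $b<c$ (with $c\in[0,\infty]$), then $\E X_n\ge b\,\P(X_n>b)$, which tends to $b$; hence $\liminf\E X_n\ge b$ for every $b<c$, and so $\liminf\E X_n\ge c$. Applying this with $X_n=\sus(G_n)$, $c=\sus(\kk)$, and with $X_n=\susq(G_n)$, $c=\susq(\kk)$, completes the proof. I do not expect a real obstacle here: all the analytic content is already in \eqref{nklim}--\eqref{ngeklim}, and the only points needing care are (a) in part (i), recognising that the residual case $\sumko k\,\rhok(\kk)\le b<\sus(\kk)$ forces $\rho(\kk)>0$, which is precisely the situation handled by the $\ngeK$-bound, and (b) in part (ii), absorbing the largest component into an $o(1)$ error by restricting to the event that it has bounded size.
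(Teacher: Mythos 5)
Your proposal is correct and follows essentially the same route as the paper: truncate at a fixed $K$, use the convergence \eqref{nklim} (and \eqref{ngeklim}) of $\nk(G_n)/n$ and $\ngeK(G_n)/n$, let $K\to\infty$, and absorb the largest component via the $K^2/n$ error term in part (ii). The only cosmetic difference is that the paper handles part (i) in one stroke by bounding $\sus(G_n)\ge\sum_k(k\bmin K)\nk(G_n)/n$ and applying monotone convergence, where you split into two cases according to whether the finite part of $\sum k\rhokkk$ already exceeds $b$; both arguments are valid and rest on the same facts.
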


\begin{proof}
As noted in \refSSS{sss_cc}, after reducing to the vertex space case
if necessary (and so assuming without loss of generality that
$\musss=1$) it suffices to consider the case $G_n=G(A_n)$.
\pfitem{i}
 Let $K$ be a fixed positive integer. 
Then, by \eqref{susnk},  \eqref{ngeklim} and \eqref{nklim},
\begin{equation*}
  \begin{split}
\sus(G_n)	
&\ge  \sumk (k\bmin K) \frac{\nk(G_n)}{n}
\\&
=
\sum_{k=1}^{K-1} k \frac{\nk(G_n)}{n}
+K \frac{\ngeK(G_n)}{n}
\\&
\pto 
\sum_{k=1}^{K-1} k \rhokkk+K\rhogeK(\kk)
=\sum_{1\le k\le\infty}(k\bmin K) \rhokkk.
  \end{split}
\end{equation*}
As $K\to\infty$, the \rhs{} tends to $\sus(\kk)$ by monotone
convergence and \eqref{suskk}; hence we can choose a finite $K$ such
that the \rhs{} is greater than $b$, and (i) follows.

\pfitem{ii}
By \eqref{sus} and \eqref{susq}, if $\cc1$ is the largest component of
$G_n$ and $\ccc1>K$, then
\begin{equation*}
  \begin{split}
\susq(G_n)	
\ge  \sum_{k=1}^K k \frac{\nk(G_n)}{n}.
  \end{split}
\end{equation*}
On the other hand, if $\ccc1\le K$, then
\begin{equation*}
  \begin{split}
\susq(G_n)
=\sus(G_n)-\ccc1^2/n
\ge\sus(G_n)-K^2/n.
  \end{split}
\end{equation*}
Hence, in both cases,
using \eqref{nklim} again,
\begin{equation}
  \begin{split}
\susq(G_n)
\ge  \sum_{k=1}^K k \frac{\nk(G_n)}{n}-\frac{K^2}n
\pto \sum_{k=1}^K k \rhokkk.
  \end{split}
\end{equation}
As $K\to\infty$, the \rhs{} tends to $\susq(\kk)$, and thus we can
choose $K$ such that it exceeds $b$, and (ii) follows.

\pfitem{iii} An immediate consequence of (i) and (ii).
\end{proof}

We continue with a simple general probability exercise.

\begin{lemma}
  \label{LA1}
Let $X_n$ be a sequence of non-negative random variables and suppose
that $a\in[0,\infty]$ is such that
\begin{romenumerate}
  \item
for every real $b<a$, \whp{} $X_n\ge b$, and
\item
$\limsup \E X_n \le a$.
\end{romenumerate}
Then $X_n\pto a$ and $\E X_n\to a$.
Furthermore, if $a<\infty$, then $X_n\lto a$, \ie, $\E|X_n-a|\to0$.
\end{lemma}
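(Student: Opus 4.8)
The plan is to prove \refL{LA1} purely probabilistically, combining the two hypotheses to pin down both the convergence in probability and the convergence of expectations, and then to upgrade to $L^1$-convergence when $a<\infty$ by exploiting nonnegativity. First I would handle convergence in probability. Fix $\eps>0$. By hypothesis (i) applied with $b=a-\eps$ (or, if $a=\infty$, with $b=1/\eps$), we get $\P(X_n\ge a-\eps)\to1$, so it remains to bound $\P(X_n>a+\eps)$. This is where hypothesis (ii) enters: since $\limsup\E X_n\le a$ and $X_n\ge0$, for large $n$ we have $\E X_n\le a+\eps^2$, and on the event $\{X_n\ge a-\eps\}$ we may write $X_n = (a-\eps) + (X_n-(a-\eps))$ with the second term nonnegative there; more cleanly, apply Markov's inequality to the nonnegative variable $Y_n := X_n - (X_n\wedge(a-\eps))\ge0$, which satisfies $\E Y_n \le \E X_n - (a-\eps)\P(X_n\ge a-\eps)$. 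Since $\P(X_n\ge a-\eps)\to1$ and $\E X_n\to{}$(at most)$\,a$ in the limsup sense, $\limsup\E Y_n\le a-(a-\eps)=\eps$, hence $\P(X_n>a+2\eps)\le\P(Y_n>2\eps-\eps)=\P(Y_n>\eps)\le\E Y_n/\eps\to 0$ up to the limsup. As $\eps$ is arbitrary this gives $X_n\pto a$ (interpreting this in the obvious way when $a=\infty$, i.e.\ $\P(X_n>M)\to1$ for every $M$).

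Next I would show $\E X_n\to a$. The lower bound $\liminf\E X_n\ge a$ follows from (i) together with nonnegativity: for $b<a$, $\E X_n\ge b\,\P(X_n\ge b)\to b$, so $\liminf\E X_n\ge b$ for every $b<a$, hence $\liminf\E X_n\ge a$. Combined with hypothesis (ii), $\limsup\E X_n\le a$, this yields $\E X_n\to a$. Note this already forces $a$ to be the only possible value; when $a=\infty$ the statement $\E X_n\to\infty$ is just the $\liminf$ bound.

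Finally, assume $a<\infty$ and prove $X_n\lto a$. The clean tool is: for nonnegative integrable $X_n$ with $X_n\pto a$ (a constant) and $\E X_n\to a$, one has $\E|X_n-a|\to0$. One way: $|X_n-a| = (a-X_n)_+ + (X_n-a)_+$, and $\E(X_n-a)_+ - \E(a-X_n)_+ = \E X_n - a \to 0$, so it suffices to show $\E(a-X_n)_+\to0$; but $0\le (a-X_n)_+\le a$ is bounded, and $(a-X_n)_+\pto 0$ by $X_n\pto a$, so bounded convergence gives $\E(a-X_n)_+\to0$, and then $\E(X_n-a)_+\to0$ as well, whence $\E|X_n-a|\to0$. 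Alternatively, and perhaps more transparently, note that $X_n$ is automatically uniformly integrable: $\E X_n\to a<\infty$ and $X_n\pto a$ imply u.i.\ (this is a standard criterion — convergence in probability to an integrable limit plus convergence of the $L^1$ norms), and u.i.\ together with convergence in probability gives $L^1$-convergence.

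The main obstacle is essentially bookkeeping rather than depth: one must be careful to argue in terms of $\limsup$ and $\liminf$ throughout (since hypothesis (ii) is stated as a $\limsup$ and hypothesis (i) only gives ``whp'', i.e.\ probabilities tending to $1$), and to treat the $a=\infty$ case by reinterpreting ``$X_n\pto a$'' and ``$\E X_n\to a$'' appropriately and simply skipping the $L^1$ claim. The only genuine input beyond Markov's inequality and bounded convergence is the standard equivalence ``convergence in probability $+$ convergence of $L^1$ norms to a finite limit $\Leftrightarrow$ $L^1$ convergence'' (equivalently, Scheffé/uniform integrability), which I would either cite or reprove via the $(a-X_n)_+$ splitting above.
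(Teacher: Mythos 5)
Your overall plan---lower tail from (i), upper tail from (ii) by a Markov-type comparison of $\E X_n$ with tail probabilities, then $\liminf \E X_n\ge a$ from (i), plus an explicit $L^1$ upgrade---is close in spirit to the paper's proof (the paper also plays $\E(X_n-a)$ against the tail probabilities, and it simply leaves the $L^1$ claim implicit, whereas your splitting $|X_n-a|=(a-X_n)_++(X_n-a)_+$ with bounded convergence is a correct way to finish). However, your upper-tail step contains a genuine flaw as written. With $Y_n:=X_n-(X_n\wedge(a-\eps))$ you correctly obtain $\limsup\E Y_n\le\eps$, but Markov at level $\eps$ then yields only $\limsup\P(Y_n>\eps)\le\limsup\E Y_n/\eps\le 1$, which is vacuous; the assertion that $\E Y_n/\eps\to 0$ ``up to the limsup'' is false in general (take $X_n\equiv a$: then $Y_n\equiv\eps$ and $\E Y_n/\eps=1$). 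The trouble is that you tied the truncation level and the deviation threshold to the same $\eps$, so the two scales cancel and the bound proves nothing; the earlier remark that $\E X_n\le a+\eps^2$ for large $n$ does not rescue this, since the loss of order $\eps$ comes from the truncation at $a-\eps$, not from hypothesis (ii).

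The repair is to decouple the two parameters, which is exactly what the paper does with its separate $b<a$ and $\eps$: fix the deviation threshold $\eta>0$, truncate at $a-\delta$ for an independent small $\delta>0$, so that $\limsup\E Y_n\le\delta$ while $\{X_n>a+\eta\}\subseteq\{Y_n>\eta\}$ (on that event $Y_n=X_n-(a-\delta)>\eta+\delta$), whence $\limsup\P(X_n>a+\eta)\le\delta/\eta$; letting $\delta\downto0$ with $\eta$ fixed gives $\limsup\P(X_n>a+\eta)=0$, and combined with (i) this is $X_n\pto a$. With that one-line fix the remainder of your argument---the $a=\infty$ case, $\liminf\E X_n\ge b\,\P(X_n\ge b)\to b$ for every $b<a$, and the $L^1$ step via $\E(a-X_n)_+\to0$---is correct and matches the paper's conclusions.
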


\begin{proof}
  If $a=\infty$, (i) says that $X_n\pto\infty$; this
implies $\liminf\E X_n\ge b$ for every $b<\infty$, and
thus
$\E X_n\to\infty$.

Assume now that $a<\infty$, and let $\eps\ge0$. Then, for every $b<a$, by
(i),
\begin{equation*}
  \begin{split}
\E(X_n-a) 
&\ge \eps \P (X_n\ge a+\eps) - (a-b)\P(a+\eps>X_n\ge b) - a\P(X_n<b)
\\	
&\ge \eps \P (X_n\ge a+\eps) - (a-b) - o(1).
  \end{split}
\end{equation*}
Hence
\begin{equation*}
\limsup\E(X_n-a) \ge \eps \limsup\P (X_n\ge a+\eps) - (a-b)
\end{equation*}
and thus, since $b<a$ is arbitrary,
\begin{equation*}
\limsup\E(X_n-a) \ge \eps \limsup \P (X_n\ge a+\eps).
\end{equation*}
Since $\limsup \E (X_n-a)\le 0$ by (ii), this yields 
$\limsup \P (X_n\ge a+\eps)=0$ for every $\eps>0$, which together with
(i) yields $X_n\pto a$.

Moreover, the same argument yields, for every $\eps\ge0$,
\begin{equation*}
\liminf\E(X_n-a) \ge \eps \liminf \P (X_n\ge a+\eps).
\end{equation*}
Taking $\eps=0$ we obtain $\liminf \E X_n \ge a$, which together with
(ii) yields $\E X_n\to a$.
\end{proof}

The idea is to use \refL{LA1} with $X_n=\sus(G_n)$ and $a=\sus(\kk)$ 
or $X_n=\susq(G_n)$ and $a=\susq(\kk)$; then condition (i) is
satisfied by \refT{Tlower}, and we only have to verify the upper bound
(ii) for the expected susceptibility.
For convenience, we state this explicitly.

\begin{lemma}
  \label{Ltest}
Let $\ka$ and $G_n$ be as in \refT{Tlower}.
\begin{romenumerate}
  \item
If\/ $\limsup\E\sus(G_n)\le\sus(\kk)$, then
$\sus(G_n)\pto\sus(\kk)$  and  $\E\sus(G_n)\to\sus(\kk)$.
  \item
If\/ $\limsup\E\susq(G_n)\le\susq(\kk)$, 
then $\susq(G_n)\pto\susq(\kk)$  and 
$\E\susq(G_n)\to\susq(\kk)$.
\end{romenumerate}
\end{lemma}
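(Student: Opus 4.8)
The plan is to derive both parts directly from \refL{LA1}, exactly as anticipated in the paragraph preceding the statement; beyond invoking that lemma together with \refT{Tlower} there is essentially nothing to do.

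For part (i), I would apply \refL{LA1} with $X_n\=\sus(G_n)$ and $a\=\sus(\kk)\in[0,\infty]$. These $X_n$ are non-negative random variables by \eqref{sus}, so the setup of \refL{LA1} applies. Condition (i) of \refL{LA1}---that for every real $b<a$ we have $X_n\ge b$ \whp---is precisely part (i) of \refT{Tlower}. Condition (ii) of \refL{LA1}---that $\limsup\E X_n\le a$---is exactly the assumed bound $\limsup\E\sus(G_n)\le\sus(\kk)$. Hence \refL{LA1} yields $X_n\pto a$ and $\E X_n\to a$, that is, $\sus(G_n)\pto\sus(\kk)$ and $\E\sus(G_n)\to\sus(\kk)$; and when $\sus(\kk)<\infty$ one also gets $\sus(G_n)\lto\sus(\kk)$, should one wish to record it.

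For part (ii) the argument is verbatim the same, now with $X_n\=\susq(G_n)$ (again non-negative, by \eqref{susq}) and $a\=\susq(\kk)$: condition (i) of \refL{LA1} is supplied by part (ii) of \refT{Tlower}, and condition (ii) of \refL{LA1} is the assumed bound $\limsup\E\susq(G_n)\le\susq(\kk)$. Since the hypotheses on $\ka$ and $G_n$ are taken ``as in \refT{Tlower}'', both the $G(A_n)$ setting and the $\gnkxn$ setting are covered automatically, with no further reduction needed.

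There is no real obstacle here: the substantive content lives in \refT{Tlower} (the lower bounds via \eqref{susnk}, \eqref{ngeklim} and \eqref{nklim}) and in \refL{LA1} (the soft probability argument turning a \whp\ lower bound plus a $\limsup$ upper bound on expectations into convergence in probability and in $L^1$). The only point a careful reader must confirm is that the displayed $\limsup$-hypothesis of \refL{Ltest} coincides with hypothesis (ii) of \refL{LA1} and that the \whp\ lower bounds of \refT{Tlower} coincide with hypothesis (i) of \refL{LA1}---both of which are immediate. I would therefore keep the proof to two or three lines, simply citing \refT{Tlower} and \refL{LA1} for each of (i) and (ii).
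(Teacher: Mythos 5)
Your proposal is correct and is exactly the paper's own argument: the paper proves \refL{Ltest} in one line, ``By \refT{Tlower} and \refL{LA1} as discussed above,'' which is precisely the application of \refL{LA1} with $X_n=\sus(G_n)$ (resp.\ $\susq(G_n)$) and $a=\sus(\kk)$ (resp.\ $\susq(\kk)$) that you spell out. Nothing is missing; your version merely makes the matching of hypotheses explicit.
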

\begin{proof}
  By \refT{Tlower} and \refL{LA1} as discussed above.
\end{proof}

Sometimes we can control the expectation only after conditioning on some 
(very likely) event. This still gives convergence in probablity. 

\begin{lemma}
  \label{Ltestcond}
Let $\ka$ and $G_n$ be as in \refT{Tlower}, and let $\cE_n$ be an event (depending on $G_n$)
such that $\cE_n$ holds whp.
\begin{romenumerate}
  \item
If\/ $\limsup\E(\sus(G_n);\cE_n) \le\sus(\kk)$, then
$\sus(G_n)\pto\sus(\kk)$.
  \item
If\/ $\limsup\E(\susq(G_n);\cE_n)\le\susq(\kk)$, 
then $\susq(G_n)\pto\susq(\kk)$.
\end{romenumerate}
\end{lemma}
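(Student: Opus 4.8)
The plan is to reduce to \refL{Ltest} (equivalently \refL{LA1}) by truncating the susceptibility at a large constant and using the event $\cE_n$ to absorb the error. The point is that \refT{Tlower} gives the lower bound (i) of \refL{LA1} unconditionally, so only the upper bound on the expectation is at issue; the conditioning on $\cE_n$ destroys the clean hypothesis of \refL{Ltest}, but since $\cE_n$ holds whp it should still suffice for convergence \emph{in probability} (though not necessarily in $L^1$).

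First I would treat case (i). Fix $M<\infty$ and set $\sus^{(M)}(G_n)\=\sus(G_n)\wedge M$. On $\cE_n$ we have $\sus^{(M)}(G_n)\le\sus(G_n)$, and on $\cE_n^{\comp}$ (which has probability $o(1)$) we have $\sus^{(M)}(G_n)\le M$, so
\begin{equation*}
\E\sus^{(M)}(G_n)\le \E\bigpar{\sus(G_n);\cE_n}+M\,\P(\cE_n^{\comp})
= \E\bigpar{\sus(G_n);\cE_n}+o(1).
\end{equation*}
Hence $\limsup_n\E\sus^{(M)}(G_n)\le\sus(\kk)$ for every $M$. On the other hand, \refT{Tlower}(i) says that for every $b<\sus(\kk)$, whp $\sus(G_n)>b$; choosing $M>b$ this gives whp $\sus^{(M)}(G_n)>b$ as well (when $b<M$). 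Thus the bounded sequence $\sus^{(M)}(G_n)$ satisfies both hypotheses of \refL{LA1} with $a=\sus(\kk)\wedge M$ if $\sus(\kk)\le M$, and with $a=M$ otherwise — in either case \refL{LA1} yields $\sus^{(M)}(G_n)\pto \sus(\kk)\wedge M$. Since $\sus(G_n)\ge\sus^{(M)}(G_n)$ always, and $\sus^{(M)}(G_n)\pto\sus(\kk)\wedge M$, we get: for every $\eps>0$ and every $M$, whp $\sus(G_n)>(\sus(\kk)\wedge M)-\eps$, which combined with \refT{Tlower}(i) already shows the lower half $\sus(G_n)\pto\sus(\kk)$ from below even when $\sus(\kk)=\infty$. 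For the upper half (when $\sus(\kk)<\infty$), take $M>\sus(\kk)$: then $\sus^{(M)}(G_n)\pto\sus(\kk)$, and on the whp event $\{\sus(G_n)<M\}\cap\cE_n$ we have $\sus(G_n)=\sus^{(M)}(G_n)$, whence $\sus(G_n)\pto\sus(\kk)$ as well. (To see that $\sus(G_n)<M$ whp when $M>\sus(\kk)$: otherwise $\E\sus^{(M)}(G_n)\ge M\,\P(\sus(G_n)\ge M)+ (\text{something})$ would contradict $\limsup\E\sus^{(M)}(G_n)\le\sus(\kk)<M$ along a subsequence, via Markov applied to the truncation.) Case (ii) is identical with $\susq$ in place of $\sus$, using \refT{Tlower}(ii).

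The main obstacle is handling the case $\sus(\kk)=\infty$ cleanly and making the truncation argument uniform in $M$: one must be careful that the bad event $\{\sus(G_n)\ge M\}$ is genuinely whp-negligible (not merely that its contribution to the truncated expectation is $o(1)$ for fixed $M$), which is what lets one upgrade the truncated convergence to convergence of $\sus(G_n)$ itself. This is exactly the place where we lose $L^1$ convergence: without control of the tail of $\sus(G_n)$ beyond what $\cE_n$ provides, the mass on $\cE_n^{\comp}$ could carry arbitrary expected susceptibility, so only $\pto$ (not $\E\sus(G_n)\to\sus(\kk)$) survives — consistent with the weaker conclusion stated in the lemma.
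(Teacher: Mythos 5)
Your proof is correct, but it takes a genuinely different route from the paper's. The paper's proof conditions on $\cE_n$: since $\cE_n$ holds whp, the convergence $\nk(G_n)/n\pto\rhokkk$ --- the only input to the proof of \refT{Tlower} --- persists under the conditional law, so \refL{LA1} applies to the conditional distribution of $\phi(G_n)$ (using $\E(\phi(G_n)\mid\cE_n)\sim\E(\phi(G_n);\cE_n)$), and conditional convergence in probability transfers back to the unconditional distribution. You instead truncate at a level $M$ and work with the unconditional law throughout: the truncation caps the contribution of $\cE_n^\comp$ to the expectation by $M\,\P(\cE_n^\comp)=o(1)$, \refL{LA1} then gives $\sus(G_n)\wedge M\pto\sus(\kk)\wedge M$, and you remove the truncation at the end. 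Your version is more modular --- it uses only the \emph{statements} of \refT{Tlower} and \refL{LA1}, whereas the paper has to reopen the proof of \refT{Tlower} to check that it survives conditioning --- at the cost of the extra de-truncation step. One small repair is needed there: the parenthetical Markov argument for $\P(\sus(G_n)\ge M)\to 0$ when $M>\sus(\kk)$ does not quite work as stated, since Markov together with $\limsup\E\bigpar{\sus(G_n)\wedge M}\le\sus(\kk)$ only yields $\limsup\P(\sus(G_n)\ge M)\le\sus(\kk)/M<1$. But the fact you need follows at once from what you have already proved: if $\sus(G_n)\ge M$ then $\sus(G_n)\wedge M=M>\sus(\kk)+\eps$ for some fixed $\eps>0$, and this event has probability tending to $0$ by the convergence in probability of the truncated variable. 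With that substitution the argument is complete.
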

\begin{proof}
After conditioning on $\cE_n$, we still have $\nk(G_n)/n\pto\rhokkk$ for each fixed $k$,
which is all that was needed in the proof of \refT{Tlower}. Letting $\phi=\sus$ or
$\susq$, since $\E(\phi(G_n)\mid \cE_n)\sim \E(\phi(G_n);\cE_n)$,  under the relevant assumption
\refL{LA1} tells us that the distribution of $\phi(G_n)$ conditioned on $\cE_n$
converges in probability to $\phi(\kk)$. But then the unconditional distribution converges in probability.
\end{proof}

We begin with a trivial case, which follows immediately from
\refL{Ltest}.

\begin{theorem}
  \label{Tinfty}
Let $\ka$ and $G_n$ be as in \refT{Tlower}.
\begin{romenumerate}
  \item
If $\sus(\kk)=\infty$, then $\sus(G_n)\pto\infty$  and 
$\E\sus(G_n)\to\infty$.
In particular, this holds if $\kk$ is critical and $\tk$ is compact,
or if $\kk$ is supercritical.
  \item
If $\susq(\kk)=\infty$, then $\susq(G_n)\pto\infty$  and 
$\E\susq(G_n)\to\infty$.
\end{romenumerate}
\end{theorem}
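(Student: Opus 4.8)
The plan is to read this off from \refL{Ltest}, exactly as the surrounding discussion advertises; indeed the statement is flagged as a ``trivial case'' precisely because the real work has already been done in \refT{Tlower}, \refL{LA1} and \refL{Ltest}.

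For part~(i): when $\sus(\kk)=\infty$ the hypothesis $\limsup\E\sus(G_n)\le\sus(\kk)$ of \refL{Ltest}(i) holds trivially, since the left-hand side is at most $\infty$. Hence \refL{Ltest}(i) immediately yields $\sus(G_n)\pto\sus(\kk)=\infty$ and $\E\sus(G_n)\to\infty$, with nothing further to check. (Unwinding the chain of citations, the substance is in \refL{LA1}: its hypothesis~(i) is furnished by \refT{Tlower}(i), and for $a=\infty$ hypothesis~(ii) is automatic, so \refL{LA1} gives $X_n\pto\infty$ and $\E X_n\to\infty$. One could equally argue directly: \refT{Tlower}(i) says $\sus(G_n)>b$ whp for every finite $b$, which is exactly $\sus(G_n)\pto\infty$, and this forces $\liminf\E\sus(G_n)\ge b$ for all finite $b$, hence $\E\sus(G_n)\to\infty$.) For the ``in particular'' clause I would simply invoke \refT{Tcritical}: when $\kk$ is critical and $\tk$ is compact, part~(i) of that theorem gives $\sus(\kk)=\infty$, and when $\kk$ is supercritical, part~(ii) gives $\sus(\kk)=\infty$; in either situation the reduction above applies.

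Part~(ii) goes through verbatim with $\susq$ replacing $\sus$: if $\susq(\kk)=\infty$, then $\limsup\E\susq(G_n)\le\susq(\kk)$ holds trivially, and \refL{Ltest}(ii) gives $\susq(G_n)\pto\infty$ and $\E\susq(G_n)\to\infty$. There is no genuine obstacle here; the only point meriting a second glance is that \refL{LA1}, and hence \refL{Ltest}, is deliberately stated so as to allow $a\in[0,\infty]$, so that the degenerate value $a=\infty$ is indeed covered by the cited results.
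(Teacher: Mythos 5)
Your proposal is correct and is essentially identical to the paper's proof, which likewise observes that the hypothesis of \refL{Ltest} is vacuous when the limiting susceptibility is infinite and then invokes \refT{Tcritical} for the ``in particular'' clause. The extra unwinding through \refL{LA1} and \refT{Tlower} is just a faithful elaboration of the same argument.
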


\begin{proof}
  The  extra conditions in \refL{Ltest} are vacuous. For (i), we use
  also \refT{Tcritical}.
\end{proof}

One way to obtain the required upper bound on the susceptibility is by
counting paths. Let $P_\ell=P_\ell(G)$ denote the number of paths
$v_0v_1\dots v_\ell$ of length $\ell$ in the graph $G$. 

\begin{lemma}
\label{Lpaths}
  Let $G$ be a graph with $n$ vertices.
Then
$\sus(G)\le\suml P_\ell(G)/n$.
\end{lemma}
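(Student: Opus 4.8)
The plan is to bound the susceptibility $\sus(G)$ by a sum over paths, using the fact that $\sus(G)$ counts ordered pairs of vertices in the same component (normalised by $n$), and that any two vertices in the same component are joined by at least one path. Write $\sus(G) = \frac1n\sumiK\ccci^2 = \frac1n\#\{(u,v): u,v\text{ in the same component}\}$, where the pairs $(u,v)$ are ordered and $u=v$ is allowed. So it suffices to show that the number of such ordered pairs is at most $\sum_{\ell\ge0} P_\ell(G)$.

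First I would set up the combinatorial injection (or rather, a surjection in the other direction). For each ordered pair $(u,v)$ with $u$ and $v$ in the same component, there exists at least one path from $u$ to $v$ in $G$; choosing, say, a shortest such path (with ties broken arbitrarily), we get a map from the set of such ordered pairs into the set $\bigcup_{\ell\ge0} \{\text{paths of length }\ell\}$. This map is injective, since a path $v_0v_1\cdots v_\ell$ determines its endpoints $v_0=u$ and $v_\ell=v$. Here I am counting a path and its reverse as distinct (both contribute to $P_\ell$), which matches counting $(u,v)$ and $(v,u)$ separately; and a single vertex $u$ is a path of length $0$, matching the diagonal pair $(u,u)$. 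Hence
\begin{equation*}
 \#\{(u,v): u,v\text{ in the same component}\} \le \sum_{\ell\ge0} P_\ell(G),
\end{equation*}
and dividing by $n$ gives $\sus(G)\le \suml P_\ell(G)/n$.

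There is essentially no obstacle here; the only point requiring a little care is the bookkeeping of conventions — whether paths are ordered sequences or subsets, whether the length-$0$ path is allowed, and whether $P_\ell$ counts each path once or twice. With the conventions as set up in the excerpt (``$P_\ell(G)$ denotes the number of paths $v_0v_1\dots v_\ell$''), a path is naturally an ordered sequence of distinct vertices, so $v_0\cdots v_\ell$ and $v_\ell\cdots v_0$ are counted separately, and $P_0(G)=n$. This is exactly the convention that makes the inequality come out with the stated constant, and in fact shows $\sus(G)\ge P_0(G)/n + P_1(G)/n = 1 + 2e(G)/n$ as a sanity check. I would state the path convention explicitly at the start of the proof and then give the injection argument in two or three lines.
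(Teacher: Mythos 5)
Your argument is correct and coincides with the paper's own proof: both bound the number of ordered pairs of vertices lying in a common component (which equals $\sum_i\ccci^2$) by assigning to each pair at least one path joining them, so that $\sum_i\ccci^2\le\suml P_\ell(G)$, and then divide by $n$. Your extra remarks on the path conventions (ordered, length $0$ allowed, reverses counted separately) are exactly the conventions the paper uses implicitly.
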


\begin{proof}
  For each ordered pair $(v,v')$ of vertices of $G$ with $v$ and $v'$
  in the same component, there is at least one path (of length $\ge0$)
starting at $v$ and ending at $v'$. 
Thus, counting all such pairs, $\sum_i\ccci^2\le\suml P_\ell$.
\end{proof}

So far our arguments relied only on convergence of the number
of vertices in components of a fixed size $k$, and so
apply in very great generality.
Unfortunately, bounding $\sus(G)$ from above, via \refL{Lpaths}
or otherwise, involves proving bounds for all $k$ simultaneously.
These bounds do not hold in general; we study two special cases
where they do in the next two subsections.

\subsection{Bounded kernels on general vertex spaces}

In this section we consider $G_n=\gnkxn$, where 
$(\ka_n)$ is any uniformly bounded
graphical sequence of kernels on a (generalized)
vertex space $\cV$ with limit $\kk$.
In fact, we shall be consider the more general situation where
$G_n=G(A_n)$ for some sequence $(A_n)$ of uniformly bounded
(random) matrices with $\dcut(A_n,\ka)\pto 0$. 
From the remarks in~\cite{cutsub}, the graphs $\gnkxn$ are of this form.
Note that this is the setting in which the component
sizes were studied by Bollob\'as, Borgs, Chayes and Riordan~\cite{QRperc}.

\begin{theorem}
  \label{Tbounded}
Let $\ka$ be a kernel and $(A_n)$ a sequence of uniformly bounded
matrices with $\dcut(A_n,\ka)\pto 0$, and set $G_n=G(A_n)$.
Alternatively, let $G_n=\gnkxn$ satisfy the assumptions
of \refSSS{sss_gnkx} or \refSSS{sss_iid},
with the $\ka_n$ uniformly bounded.
\begin{romenumerate}
\item
We have $\sus(G_n)\pto\sus(\kk)$. 
\item
If $\kk$ is \qir, then $\susq(G_n)\pto\susq(\kk)$.
\end{romenumerate}
\end{theorem}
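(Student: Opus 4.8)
The plan is to verify the hypotheses of \refL{Ltest} (and, where convenient, \refL{Ltestcond}): \refT{Tlower} already supplies the lower bounds, so everything comes down to the matching upper bounds $\limsup\E\sus(G_n)\le\sus(\kk)$ and $\limsup\E\susq(G_n)\le\susq(\kk)$. As in the proof of \refT{Tlower}, it suffices (using the remarks of \refSSS{sss_cc}, reducing to a vertex space and renormalizing via \refR{Renorm} so that $\musss=1$) to treat $G_n=G(A_n)$, where $\ka$ is a (necessarily bounded) kernel on $[0,1]$ with Lebesgue measure, $0\le a_{ij}\le M$ for a fixed $M$, and $\dcut(A_n,\ka)\to0$ (deterministically, by coupling and conditioning as in \refSSS{sss_cc}). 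Then $\intss\ka^2<\infty$, so $\tk$ is \HS{} and hence compact, and by \refT{Tcritical} and \refT{TBP1} this gives $\sus(\kk)<\infty$ exactly when $\kk$ is subcritical, and $\susq(\kk)<\infty$ except when $\kk$ is critical. So in the critical and supercritical cases part~(i) is \refT{Tinfty}(i), and in the critical case part~(ii) is \refT{Tinfty}(ii); the real content is the subcritical case of~(i) and the supercritical case of~(ii).

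\emph{Part~(i), subcritical case.} The idea is to bound $\sus(G_n)$ by path counts (\refL{Lpaths}), $\E\sus(G_n)\le\sum_{\ell\ge0}\E P_\ell(G_n)/n$, and to note that for every $\ell$, bounding the probability that a fixed sequence $v_0,\dots,v_\ell$ of distinct vertices spans a path by $\prod_{i=1}^\ell(a_{v_{i-1}v_i}/n)$ and then dropping the distinctness restriction gives
\[
\E P_\ell(G_n)/n\ \le\ n^{-(\ell+1)}\!\!\sum_{v_0,\dots,v_\ell}\ \prod_{i=1}^\ell a_{v_{i-1}v_i}\ =\ \innprod{\Tx{\ka_{A_n}}^\ell 1,1}\ =\ t_\ell(\ka_{A_n}),
\]
where $\Tx{\ka_{A_n}}$ acts on step functions as $n\qw A_n$ and $t_\ell(W)=\int_{[0,1]^{\ell+1}}\prod_{i=1}^\ell W(x_{i-1},x_i)\,\dd x_0\cdots\dd x_\ell$ is the homomorphism density of the $\ell$-edge path. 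The key auxiliary fact is a continuity estimate for uniformly bounded kernels: if $0\le W_n\le M$ and $\dcut(W_n,\ka)\to0$, then $\limsup_n\norm{\Tx{W_n}}\le\norm\tk$. I would prove this by taking unit vectors $f_n$ with $\bigabs{\innprod{\Tx{W_n}f_n,f_n}}$ close to $\norm{\Tx{W_n}}$ and splitting $f_n=f_n'+f_n''$ with $f_n'=f_n\ett{|f_n|\le L}$: boundedness gives $\bigabs{\innprod{\Tx{W_n}g,h}}\le M\on g\on h$, and $\on{f_n''}\le1/L$ by Chebyshev, so the $f_n''$-terms contribute $O(M/L)$; meanwhile $f_n'/L$ has sup-norm $\le1$, so cut-norm closeness (after replacing $\ka$ by a near-optimal rearrangement, which does not change $\norm\tk$) gives $\innprod{\Tx{W_n}f_n',f_n'}\le\norm\tk+L^2\,o(1)$, and letting $\ntoo$ then $L\to\infty$ proves the estimate. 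In the subcritical case $\norm\tk<1$, so there are $n_0$ and $r<1$ with $\norm{\Tx{\ka_{A_n}}}\le r$ for $n\ge n_0$, whence $t_\ell(\ka_{A_n})\le r^\ell$; the counting lemma for the cut metric (cf.\ \cite{BCLSV:1}) gives $t_\ell(\ka_{A_n})\to t_\ell(\ka)=\innprod{\tk^\ell1,1}$ for each fixed $\ell$, and dominated convergence yields $\sum_\ell t_\ell(\ka_{A_n})\to\sum_\ell\innprod{\tk^\ell1,1}=\sus(\kk)$ by \refL{LBP1}. Hence $\limsup\E\sus(G_n)\le\sus(\kk)$, and \refL{Ltest}(i) finishes part~(i) (and also gives $\E\sus(G_n)\to\sus(\kk)$).

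\emph{Part~(ii).} If $\kk$ is subcritical then $\susq(\kk)=\sus(\kk)$ and $\susq(G_n)\le\sus(G_n)$, so $\limsup\E\susq(G_n)\le\limsup\E\sus(G_n)\le\sus(\kk)=\susq(\kk)$ and \refL{Ltest}(ii) applies; the critical case is \refT{Tinfty}(ii). In the supercritical case I would pass to the dual kernel: since $\kk$ is bounded, \qir{} and supercritical, \cite[Theorem~6.7]{kernels} gives $\norm\tkq<1$ and \eqref{giant}--\eqref{2nd} hold (this is the one place where irreducibility enters). By the duality analysis of the giant component in \cite{kernels}, the graph $G'_n:=G_n-\cc1$ obtained by deleting the vertices of the giant component is, asymptotically and in the sense of \refSSS{sss_cc}, an inhomogeneous random graph governed by the (bounded, subcritical) dual kernel $\kkq$; so part~(i) applies to it and gives $\sus(G'_n)\pto\sus(\kkq)$. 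Since $\susq(G_n)=(|G'_n|/n)\,\sus(G'_n)$ and $|G'_n|/n=1-\ccc1/n\pto1-\rho(\kk)$ by \eqref{giant}, we get $\susq(G_n)\pto(1-\rho(\kk))\,\sus(\kkq)=\susq(\kk)$ by \refL{Lsusq}. (One can instead stay at the level of expectations: run the path count of part~(i) restricted to paths avoiding $\cc1$, relating it to $\tkq^\ell1$, and invoke \refL{Ltestcond} on the \whp{} event that $\ccc1$ and $\ccc2$ have their typical sizes.)

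\emph{Expected main obstacle.} The genuinely new step is the continuity estimate $\limsup_n\norm{\Tx{W_n}}\le\norm\tk$ for uniformly bounded, cut-convergent sequences of kernels: this fails without uniform boundedness, and it is exactly what converts the term-by-term convergence of path densities into a uniformly summable (geometric) bound, hence into control of $\E\sus(G_n)$. The duality input needed in the supercritical case of~(ii) is, by contrast, essentially a repackaging of the giant-component analysis of \cite{kernels}, and the remaining steps are the routine bookkeeping above.
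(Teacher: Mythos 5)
Your proposal is correct and follows essentially the same route as the paper: reduction to bounded matrices, path counting via \refL{Lpaths} together with the operator-norm continuity of uniformly bounded cut-convergent kernels to obtain the matching upper bound for \refL{Ltest}, and, in the supercritical case, deletion of the giant component, part (i) applied to the dual kernel, and \refL{Lsusq}. The only differences are minor: you prove $\limsup_n\norm{T_{W_n}}\le\norm{\tk}$ directly by a truncation argument where the paper cites \cite{QRperc} or Riesz--Thorin interpolation, and the duality statement you invoke for $G(A_n)$ is really the cut-metric version from \cite{cutdual} (the result \cite[Theorem 12.1]{kernels} you allude to covers only the $\gnkxn$ case), with the application of part (i) to the deleted graph requiring the same conditioning on its (random) vertex count as in the paper.
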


The boundedness assumption is essential unless further conditions are imposed;
see \refE{Ebad}.
The extra assumption in (ii) 
is needed to rule out the possibility that there are
two or more giant components, living in different parts of the type space.

\begin{proof}
As noted above, the case of a generalized vertex space $\cV$
may be reduced to the case of a vertex space by conditioning and
renormalization, see \refSSS{sss_gnkx} and
\refR{Renorm}, and the vertex space case 
in  \refSSS{sss_gnkx} or \refSSS{sss_iid}
is a special case of
the version with matrices $A_n$, so it suffices to consider
the latter version. In particular, we may assume that $\mu(\sss)=1$.

Coupling appropriately, we may and shall assume that $\dcut(A_n,\ka)\to 0$.
It is easily seen that this and the uniform boundedness of the $A_n$
imply that $\kk$ is bounded.

For (i), suppose first that $\norm{\tk}\ge 1$. Then, since $\tk$ is compact,
by \refT{Tcritical} we have $\sus(\ka)=\infty$, and by \refT{Tinfty}
we have $\sus(G_n)\pto\infty$ as required.

Suppose then that $\norm{\tk}<1$.
Let $\ka_n=\ka_{A_n}$ denote the piecewise constant kernel corresponding to $A_n$.
Then, letting $1$ denote the vector $(1,\dots,1)$, and writing
$A_n=(a_{ij}^{(n)})$, we have
\begin{equation}\label{s2}
  \begin{split}
\E P_\ell(G_n)
&\le	
\E \sum_{j_0,\dots,j_\ell =1}^n \prod_{i=1}^\ell \frac{a_{j_{i-1},j_i}^{(n)}}n
\\
&=
n \E \int_{\sss^{\ell+1}} \prod_{i=1}^\ell \kk_n(x_{i-1},x_i) \dd\mu(x_0)\cdots\dd\mu(x_\ell)
\\
&=n\innprodmu{T_{\ka_n}^\ell1,1}.
  \end{split}
\end{equation}

Recall that $\ka_n$ and $\ka$ are uniformly bounded, and $\dcut(\ka_n,\ka)\to 0$. 
As noted in~\cite{QRperc}, 
or by the Riesz--Thorin interpolation theorem
\cite[Theorem VI.10.11]{Dunford-Schwartz} 
(for operators $L^\infty\to L^1$ and $L^1\to L^\infty$),
it is easy to check that this implies
$\norm{\tkn}\to\norm{\tk}$. (In fact, the normalized spectra converge;
see~\cite{BCLSV3}.)
Since $\norm{\tk}<1$, it follows that for some $\delta>0$
we have $\norm{\tkn}<1-\delta$ for $n$ large enough, so
$\sum_\ell \innprodmu{T_{\ka_n}^\ell1,1} \le \sum_\ell \norm{\tkn}^\ell$
converges geometrically.

For a fixed $\ell$, and kernels $\ka$, $\ka'$ bounded by $M$, say,
it is easy to check that
$ | \innprodmu{T_{\ka'}^\ell1,1} - \innprodmu{T_{\ka}^\ell1,1}|
\le \ell M^{\ell-1}\cn{\ka'-\ka}$ (see, for example, \cite[Lemma 2.7]{cutsub}).
Since $\innprodmu{T_{\ka'}^\ell1,1}$ is preserved by rearrangement,
we may replace $\cn{\ka'-\ka}$ by $\dcut(\ka',\ka)$ in this bound.
Hence, for each $\ell$, we have
$\innprodmu{\tkn^\ell1,1} \to \innprodmu{\tk^\ell1,1}$.
Combined with the geometric decay established above, it follows that
\[
 \sum_{\ell=0}^\infty \innprodmu{\tkn^\ell1,1} \to \sum_\ell \innprodmu{\tk^\ell1,1} = \sus(\ka).
\]
By \refL{Lpaths} and \eqref{s2} we thus have
\begin{equation*}
\limsup\E\sus(G_n)
\le\limsup \frac1n\suml\E P_\ell(G_n)
\le\limsup \sum_{\ell=0}^\infty \innprodmu{\tkn^\ell1,1} 
 = \sus(\ka),
\end{equation*}
which with \refL{Ltest}(i) gives $\sus(G_n)\pto \sus(\ka)$ as required.

We now turn to $\susq$, i.e., to the proof of (ii).
If $\norm{\tk}\le 1$, then $\rho(\ka)=0$ and
$\susq(\ka)=\sus(\ka)$.
On the other hand, $\susq(G_n)<\sus(G_n)$, so the bound
above gives $\limsup\E\susq(G_n)\le \sus(\ka)=\susq(\ka)$,
and \refL{Ltest}(ii) gives the result.

Now suppose that $\norm{\tk}>1$.
Let $\tG_n$ be the graph obtained from $G_n$ by deleting all vertices
in the largest component $\cc1$, and let $\tn$ be the number
of vertices of $\tG_n$. By the duality result of \cite{cutdual}
(see also \cite[Theorem 12.1]{kernels} for the case $G_n=\gnkxn$),
there is a random sequence $(B_n)$ of matrices
(of random size $\tn\times\tn$)
with $\dcut(B_n,\kkqq)\pto 0$,
such that $\tG_n$ may be coupled to agree \whp{} with $G(B_n)$;
here $\kkqq\=\kkq'$ is $\kkq$ renormalized as
in \eqref{mu'}. (Recall that $\kkq$ is regarded as a
kernel on $(\sss,\muq)$, where $\muq$ defined by
\eqref{muq} is not a probability measure.) By
\refR{Renorm}, $\sus(\kkqq)=\sus(\kkq)$.
 
Note that
\begin{equation}
  \label{fb}
\frac{|\tG_n|}n=\frac{n-\ccc1}n\pto1-\rho(\kk)
\end{equation}
by \eqref{giant}.
After conditioning
on the number of vertices of $\tG_n$ and the matrices $B_n$,
we can apply part (i) to conclude that
\begin{equation}  \label{fa}
 \sus(\tG_n)= \sus(G(B_n))+\op(1)\pto\sus(\kkqq)=\sus(\kkq).
\end{equation}
Finally, if $\set{\cci}_{i\ge1}$ are the components of $G_n$,
then $\set{\cci}_{i\ge2}$ are the components of $\tG_n$, and thus by
\eqref{susq}, \eqref{sus}, \eqref{fb}, \eqref{fa}
and \refL{Lsusq}
\begin{equation*}
\susq(G_n)
= \frac{\sum_{j\ge2}\ccc{i}^2}{n}	
= \frac{|\tG_n|\sus(\tG_n)}{n}
\pto (1-\rho(\kk)) \sus(\kkq)
=\susq(\kk).
\qedhere
\end{equation*}
\end{proof}

\subsection{The \iid{} case} 

\begin{theorem}
  \label{Tiid}
Let $\kk$ be an
integrable kernel on an \iid{} vertex space $\cV$.
Then $\sus(\gnkx)\pto\sus(\kk)$ and $\E\sus(\gnkx)\to\sus(\kk)$.
\end{theorem}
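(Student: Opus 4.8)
The plan is to apply \refL{Ltest}(i): together with \refT{Tlower} and \refL{LA1}, which are available in the \iid{} setting since \eqref{nklim} holds there, this reduces the theorem to the single inequality $\limsup\E\sus(\gnkx)\le\sus(\kk)$. When $\sus(\kk)=\infty$ this is automatic (the hypothesis of \refL{Ltest}(i) is then vacuous; alternatively, one may simply quote \refT{Tinfty}(i)), so the real task is to bound $\E\sus(\gnkx)$ from above.

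For that I would count paths. By \refL{Lpaths}, $\sus(\gnkx)\le n\qw\suml P_\ell(\gnkx)$, and hence, by Tonelli's theorem,
\[
  \E\sus(\gnkx)\le n\qw\suml\E P_\ell(\gnkx).
\]
The key point, special to the \iid{} case, is that a path $v_0v_1\cdots v_\ell$ visits $\ell+1$ \emph{distinct} vertices, so their types $x_{j_0},\dots,x_{j_\ell}$ are genuinely \iid{} with law $\mu$, and its $\ell$ consecutive edges are distinct, hence conditionally independent given the types. Conditioning on the types and bounding $\min(\kk/n,1)\le\kk/n$, the probability that a fixed ordered $(\ell+1)$-tuple of distinct vertices spans a path in $\gnkx$ is at most
\[
  n^{-\ell}\int_{\sss^{\ell+1}}\prod_{i=1}^{\ell}\kk(y_{i-1},y_i)\dd\mu(y_0)\cdots\dd\mu(y_\ell)
  = n^{-\ell}\innprodmu{\tk^\ell 1,1},
\]
the integral being evaluated by the same inductive step $f_{j+1}=\tk f_j$ used in the proof of \refL{LBP1}. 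Summing over the at most $n^{\ell+1}$ such tuples gives $\E P_\ell(\gnkx)\le n\innprodmu{\tk^\ell 1,1}$, whence, recalling $\musss=1$ and \eqref{bp1b},
\[
  \E\sus(\gnkx)\le\suml\innprodmu{\tk^\ell 1,1}=\sus(\kk).
\]
Since this holds for every $n$, \refL{Ltest}(i) delivers both $\sus(\gnkx)\pto\sus(\kk)$ and $\E\sus(\gnkx)\to\sus(\kk)$.

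I do not anticipate a serious obstacle. Unlike the proof of \refT{Tbounded}, there is no need for a cut-metric approximation, Riesz--Thorin interpolation, or a geometric-decay estimate: because $\kk$ is fixed and the vertex types are exactly \iid, $\E P_\ell(\gnkx)$ admits the closed-form bound $n\innprodmu{\tk^\ell 1,1}$, and $\suml\innprodmu{\tk^\ell 1,1}=\sus(\kk)$ exactly. The only minor care needed concerns the $\min(\cdot,1)$ truncation (harmless, since only an upper bound is wanted) and the elementary counting estimate $n(n-1)\cdots(n-\ell)\le n^{\ell+1}$; nothing needs optimizing, as the clean bound $\E\sus(\gnkx)\le\sus(\kk)$ is precisely the hypothesis required by \refL{Ltest}(i).
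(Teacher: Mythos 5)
Your proposal is correct and is essentially identical to the paper's proof: both bound $\sus(G_n)$ by the path count via \refL{Lpaths}, compute $\E P_\ell\le n\innprodmu{\tk^\ell1,1}$ using the \iid{} types and the truncation $\min(\kk/n,1)\le\kk/n$, sum using \eqref{bp1b} to get $\E\sus(\gnkx)\le\sus(\kk)$, and conclude via \refL{Ltest}. No gaps.
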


\begin{proof}
Similarly to the estimate in the proof of \refT{Tbounded},
for any $\ell$, the expected number $\E P_\ell$ of paths of length $\ell$ is
  \begin{multline*}
n\dotsm(n-\ell)
\int_{\sss^{\ell+1}}\prod_{i=1}^\ell \min\Bigpar{\frac{\kk(x_{i-1},x_{i})}n,1}
\dd\mu(x_0)\dotsm\dd\mu(x_{\ell})
\\
\le
n\int_{\sss^{\ell+1}}\prod_{i=1}^\ell \kk(x_{i-1},x_{i})
\dd\mu(x_0)\dotsm\dd\mu(x_{\ell})
=n\innprodmu{\tk^\ell1,1}.
  \end{multline*}
Summing over all $\ell\ge0$, we see by \eqref{bp1b} that the expected
total number of paths is at most $n\sus(\kk)$.
Hence, by \refL{Lpaths},
\begin{equation}\label{tiid}
 \E\sus(\gnkx) \le \E\suml P_\ell/n 
\le \sus(\kk).
\end{equation}
The result follows by \refL{Ltest}.
\end{proof}

Our next aim is to prove a similar result for
$\susq$. Unfortunately, we need an extra assumption. We shall
assume that $\tk$ is compact, though any condition guaranteeing
\eqref{need} below will do.

\begin{theorem}\label{Tsup}
Let $\kk$ be an irreducible, integrable kernel
on an \iid{} vertex space $\cV$ with $\norm\tk>1$, and let $G_n=\gnkx$.
If $\tk$ is compact, then
$\susq(G_n)\pto\susq(\kk)$.
\end{theorem}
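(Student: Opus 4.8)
The plan is to follow the strategy of the proof of \refT{Tbounded}\,(ii): delete the giant component of $G_n$, identify the remaining graph with the random graph attached to the \emph{dual} kernel, and use that this dual is subcritical, so that its susceptibility is finite and equal to a convergent sum of path counts. By \refL{Ltestcond}\,(ii) it suffices to exhibit an event $\cE_n$ holding \whp{} with $\limsup\E\bigpar{\susq(G_n);\cE_n}\le\susq(\kk)$; the matching lower bound is already contained in \refT{Tlower} (which \refL{Ltestcond} invokes).

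First I would collect the inputs. Since $\kk$ is irreducible, $\norm{\tk}>1$ and $\tk$ is compact, the final assertion of \refT{TBP1} --- with ``$\tk$ compact'' in place of ``$\intss\kk^2<\infty$'', as observed after its proof --- gives $\norm{\tkq}<1$; hence $\sus(\kkq)<\infty$, and after renormalising the dual as in \refR{Renorm} (so $\kkqq=\muq(\sss)\kkq$ lives on the probability space $(\sss,\muq')$ with $\muq'=\muq(\sss)\qw\muq$) we have $\sus(\kkqq)=\sus(\kkq)$ and, by \eqref{bp1b}, $\sumj\innprod{T_{\kkqq}^j1,1}_{\muq'}=\sus(\kkqq)<\infty$. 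This is the only role of compactness; all that is really needed is that the dual kernel be subcritical. Secondly, $\kk$ being irreducible and supercritical, \eqref{giant} and \eqref{2nd} give $|\cc1(G_n)|/n\pto\rho(\kk)$ and $|\cc2(G_n)|/n\pto0$; write $\tG_n$ for $G_n$ with the vertices of $\cc1$ removed and $\tn=|\tG_n|$, so that $\tn/n\pto1-\rho(\kk)$ and, since $0\le\tn/n\le1$, $\E(\tn/n)\to1-\rho(\kk)$.

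Next I would apply the duality result of \cite{cutdual} (\cf{} \cite[Theorem~12.1]{kernels} for the vertex-space model) in its \iid{} form: there is an event $\cE_n$, holding \whp, on which $\tG_n$ coincides with an inhomogeneous random graph $\widehat G_n$ on $\tn$ vertices whose types are \iid{} of law $\muq'$ conditionally on $\tn$ and whose kernel is $\kkqq$. On $\cE_n$, \refL{Lpaths} applied to $\tG_n=\widehat G_n$ gives $\susq(G_n)=(\tn/n)\sus(\widehat G_n)\le n\qw\suml P_\ell(\widehat G_n)$, hence
\[
\E\bigpar{\susq(G_n);\cE_n}\le n\qw\,\E\,\suml P_\ell(\widehat G_n).
\]
The first-moment bound on path counts from the proof of \refT{Tiid}, now with $\tn$ and $\kkqq$ in place of $n$ and $\kk$ (bounding the edge probability $\min(\kkqq/\tn,1)$ by $\kkqq/\tn$), gives $\E\bigpar{\suml P_\ell(\widehat G_n)\mid\tn}\le\tn\,\sus(\kkqq)=\tn\,\sus(\kkq)$, and therefore
\[
\E\bigpar{\susq(G_n);\cE_n}\le\sus(\kkq)\,\E(\tn/n)\longrightarrow\bigpar{1-\rho(\kk)}\sus(\kkq)=\susq(\kk),
\]
the last equality by \refL{Lsusq}. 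With \refL{Ltestcond}\,(ii) this yields $\susq(G_n)\pto\susq(\kk)$.

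The step I expect to be the main obstacle is the path count for the dual graph in the \iid{} setting, where $\kk$ --- hence $\kkq$ --- need not be bounded, only integrable. The purely metric conclusion ``$\dcut(B_n,\kkqq)\pto0$'' of a generic duality statement does \emph{not} suffice here, because for an unbounded kernel a perturbation of arbitrarily small cut norm can make $\innprod{T^\ell1,1}$ blow up for $\ell\ge1$, destroying the sum over $\ell$. One genuinely needs the finer, \iid{}-vertex-space version of the duality, returning the leftover graph as an \iid{} inhomogeneous random graph \emph{with the dual kernel $\kkqq$ itself}, so that the \refT{Tiid}-type estimate $\E(P_\ell\mid\tn)\le\tn\innprod{T_{\kkqq}^\ell1,1}_{\muq'}$ holds term by term; compactness of $\tk$ then enters only to force $\norm{\tkq}<1$, which is what makes $\sumj\innprod{T_{\kkqq}^j1,1}_{\muq'}=\sus(\kkq)$ finite (the condition singled out in the discussion preceding the theorem). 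Smaller routine points: checking that passing to the \whp{} event $\cE_n$ does not disturb $\nk(G_n)/n\pto\rhok(\kk)$ for each fixed $k$, which is what \refL{Ltestcond} needs for the lower bound, and absorbing the bounded factor $\tn/n\to1-\rho(\kk)$.
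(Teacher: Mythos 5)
Your overall plan --- delete the giant, identify the remainder with the dual kernel, and redo the \refT{Tiid} path count for $\kkqq$ --- is not the paper's route, and it founders on exactly the step you flag as ``the main obstacle''. The duality results actually available, \cite{cutdual} and \cite[Theorem 12.1]{kernels}, say only what is quoted in the proof of \refT{Tbounded}(ii): $\tG_n$ can be coupled to agree \whp{} with $G(B_n)$ for random matrices $B_n$ with $\dcut(B_n,\kkqq)\pto 0$. As you yourself observe, cut-metric closeness is useless for summing $\innprod{T^\ell 1,1}$ over all $\ell$ when the kernel is unbounded, so you invoke instead a ``finer, \iid{}-vertex-space version of the duality'' in which, conditionally on $\tn$, the leftover vertex types are \iid{} with the renormalized law $\muq(\sss)\qw\muq$ and the graph is exactly the \iid{} model with kernel $\kkqq$. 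No such statement is proved in \cite{cutdual}, \cite{kernels} or anywhere else cited, and it is not merely a routine strengthening: the types of the vertices outside $\cc1$ are exchangeable but not independent, and conditioning on non-membership in the giant also perturbs the conditional edge distribution, so any coupling making the remainder exactly (\whp) an \iid{} inhomogeneous random graph with kernel $\kkqq$ would itself require an argument at least as delicate as the theorem you are trying to prove. Asserting it as an input leaves the proof with a genuine gap; everything downstream (the term-by-term bound $\E(P_\ell\mid\tn)\le\tn\innprod{T_{\kkqq}^\ell1,1}$, the factor $\tn/n\to1-\rho(\kk)$, \refL{Lsusq}, \refL{Ltestcond}) is fine but rests on this unproved foundation.

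For comparison, the paper avoids duality altogether. It bounds $\E\susqd(G_n)$, the contribution of components of size at most $\delta n$, by counting paths no vertex of which sends an edge to a component of size at least $\delta n$ in an auxiliary graph $G'$ built on a reserved set of vertices with the truncated kernel $\kk^M$. The probability that a vertex of type $y$ fails to attach is bounded by $f_\eps(y)\approx 1-\rho((1-\eps)\kk^M;y)+5\eps$, uniformly along the path, using a uniform law of large numbers for the types in the giant component (\refL{gcdist}), the stability result \refT{stab} from \cite{cutsub}, and McDiarmid's inequality (\refL{gceb}); this yields \refL{sc}. Compactness of $\tk$ enters only at the very end, to show $\norm{\tkqq}\to\norm{\tkq}<1$ as $\eps\to0$ so that the path series converges and, by dominated convergence, tends to $\susq(\kk)$; \refL{Ltestcond}(ii) then finishes. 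So the place where you hoped a ready-made duality would do the work is precisely where the paper's several lemmas are needed, and your proposal does not supply a substitute for them.
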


We do not know whether compactness, or some similar assumption, is necessary
for this result.

The main idea of the proof is to count the expected number of paths $P$ such that $P$ is not
joined to a large component of $G_n-P$. We start with a few
preparatory lemmas that hold under more general conditions than
\refT{Tsup} itself.

Recall that $\cC_1=\cC_1(G_n)\subseteq [n]$ denotes the (vertex
set of) the largest component of $G_n$. As in~\cite{kernels}, given $G_n$,
let $\nuni$ denote the empirical distribution
of the types of the vertices in $\cC_1(G_n)$, so
for $A\subset \sss$ we have
\begin{equation*}
 \nuni(A) = n^{-1} \bigl|\bigl\{i\in \cC_1(G_n) : x_i\in A\bigr\}\bigr|.
\end{equation*}

\begin{lemma}\label{gcdist}
Let $\kk$ be an irreducible, integrable kernel on an \iid{} vertex space $\vxs=(\sss,\mu,\xss)$,
and let $A$ be a measurable subset of $\sss$. Then
\begin{equation*}
 \nuni(A)\pto \mu_\kk(A)\=\int_A\rho(\kk;x)\dd\mu(x).
\end{equation*}
More precisely, the convergence is uniform in $A$: given any $\eps>0$
there is an $n_0$ such that for all $n\ge n_0$ and all measurable $A$ we have
\begin{equation*}
 \P\bigpar{ | \nuni(A) - \mu_\kk(A) | \ge \eps } \le \eps.
\end{equation*}
\end{lemma}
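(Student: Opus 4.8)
The plan is to compare $\nuni(A)$, the normalised number of type-$A$ vertices in the giant component, with the normalised number of type-$A$ vertices lying in \emph{any} component of order at least $K$, and to show the latter concentrates around $\int_A\rho(\kk;x)\dd\mu(x)$ as $K\to\infty$. Recall that $\musss=1$ here. If $\kk$ is not supercritical then $\rho(\kk;x)=0$ \aex, so $\mu_\kk\equiv0$, while $\nuni(A)\le|\cc1(G_n)|/n\pto\rho(\kk)=0$ by \eqref{giant}, uniformly in $A$; so we may assume $\norm{\tk}>1$, and then $\rho(\kk)>0$. Fix $\eps>0$. For an integer $K\ge1$ put $f_K(x)\=\P(|\bpk(x)|\ge K)=\sum_{K\le j\le\infty}\rhoj(\kk;x)$, so that $f_K(x)\downto\rho(\kk;x)$ as $K\to\infty$ and $\ints f_K\dd\mu=\rhogeK(\kk)=1-\sum_{j=1}^{K-1}\rhoj(\kk)$; since $\sum_{1\le j<\infty}\rhoj(\kk;x)$ integrates to $1-\rho(\kk)<\infty$, the number $\delta_K\=\rhogeK(\kk)-\rho(\kk)$ tends to $0$ as $K\to\infty$, and I fix $K$ with $\delta_K<\eps/8$. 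Let $M_K(A)$ be the number of vertices $i$ with $x_i\in A$ and $|\cC(i)|\ge K$. Whenever $|\cc1|\ge K$ we have $\cc1\subseteq\{i:|\cC(i)|\ge K\}$, so
\begin{equation*}
 0\le M_K(A)-\bigabs{\{i\in\cc1:x_i\in A\}}=\bigabs{\{i\notin\cc1:x_i\in A,\ |\cC(i)|\ge K\}}\le\ngeK(G_n)-|\cc1|,
\end{equation*}
a bound independent of $A$. Now $|\cc1|\ge K$ \whp{} by \eqref{giant} (as $\rho(\kk)>0$), and \eqref{nklim} gives $\ngeK(G_n)/n=1-\sum_{k=1}^{K-1}\nk(G_n)/n\pto\rhogeK(\kk)$; together with \eqref{giant} this yields $\bigpar{\ngeK(G_n)-|\cc1|}/n\pto\delta_K$, so \whp{} $\bigabs{\nuni(A)-M_K(A)/n}<\delta_K+\eps/8<\eps/4$ for \emph{every} measurable $A$ simultaneously.

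The main step is to show that, for this fixed $K$, there is $n_0=n_0(\eps,K)$ with $\P\Bigpar{\bigabs{M_K(A)/n-\int_A f_K\dd\mu}\ge\eps/4}\le\eps/2$ for all $n\ge n_0$ and all measurable $A$. I would use a second-moment estimate: with $M_K(A)=\sum_{i=1}^nY_i$, $Y_i\=\ett{x_i\in A}\ett{|\cC(i)|\ge K}$, the $Y_i$ are exchangeable and in $[0,1]$, so $\Var\bigpar{M_K(A)/n}\le n\qw+\bigabs{\Cov(Y_1,Y_2)}$. Put $g(x)\=\P(|\cC(1)|\ge K\mid x_1=x)$. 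Since $x_1,x_2$ are independent with law $\mu$, we have $\E[Y_1]^2=\E[\ett{x_1\in A}\ett{x_2\in A}g(x_1)g(x_2)]$, hence $\Cov(Y_1,Y_2)=\E[Y_1Y_2]-\E[Y_1]^2$ equals
\begin{equation*}
 \E\Bigsqpar{\ett{x_1\in A}\ett{x_2\in A}\Bigpar{\P\bigpar{|\cC(1)|\ge K,\ |\cC(2)|\ge K\mid x_1,x_2}-g(x_1)g(x_2)}},
\end{equation*}
so that $\bigabs{\Cov(Y_1,Y_2)}\le\beta_n(K)$, where $\beta_n(K)$ is the $L^1$-norm over $(x_1,x_2)$ of the bracketed difference and, crucially, does \emph{not} depend on $A$. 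I would then invoke the branching-process coupling behind \eqref{ngeklim} (\cite{kernels}, and \cite{clustering} for the \iid{} case): it gives $g(x)\to f_K(x)$ for \aex{} $x$, and, applied to two fixed vertices --- whose explorations can be stopped after at most $O(K)=O(1)$ vertices and are therefore disjoint \whp{} --- it gives $\P(|\cC(1)|\ge K,\ |\cC(2)|\ge K\mid x_1,x_2)\to f_K(x_1)f_K(x_2)$ for \aex{} $(x_1,x_2)$. Since every quantity lies in $[0,1]$, dominated convergence yields $\beta_n(K)\to0$ and $\E\bigabs{g(x_1)-f_K(x_1)}\to0$; the latter bounds $\bigabs{\E[M_K(A)/n]-\int_A f_K\dd\mu}=\bigabs{\E[\ett{x_1\in A}(g(x_1)-f_K(x_1))]}$ uniformly in $A$. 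Choosing $n_0$ so that $\E\bigabs{g(x_1)-f_K(x_1)}<\eps/8$ and $\bigpar{n\qw+\beta_n(K)}/(\eps/8)^2<\eps/2$ for $n\ge n_0$, Chebyshev's inequality gives the claim.

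Finally, $\bigabs{\int_A f_K\dd\mu-\mu_\kk(A)}=\int_A\bigpar{f_K(x)-\rho(\kk;x)}\dd\mu(x)\le\delta_K<\eps/8$ for every $A$. Combining the three estimates, for $n\ge n_0$ and every measurable $A$ we get $\bigabs{\nuni(A)-\mu_\kk(A)}<\eps/4+\eps/4+\eps/8<\eps$ outside an event of probability at most $\eps/2+o(1)\le\eps$; this is the uniform assertion, and fixing $A$ and letting $\eps\to0$ gives $\nuni(A)\pto\mu_\kk(A)$.

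The only input not already recorded above is the type-tracking refinement of the argument for \eqref{ngeklim} --- \ie{} the one- and two-vertex branching-process couplings of \cite{kernels} (resp.\ \cite{clustering}) --- and the point is that inserting the bounded factors $\ett{x_i\in A}\le1$ into that second-moment argument affects no estimate, which is exactly why $n_0$ can be chosen independently of $A$. I expect the main obstacle to lie precisely here: stating the two-vertex coupling cleanly, in particular the asymptotic conditional independence of $\{|\cC(1)|\ge K\}$ and $\{|\cC(2)|\ge K\}$ given the two root types.
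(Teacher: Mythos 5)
Your argument is correct in outline and shares the paper's overall strategy --- approximate the giant's type-count by the count over components of size at least $K$, absorb the medium components into an error term that does not depend on $A$, and get uniformity in $A$ because every error bound is $A$-free --- but you implement the key uniform concentration step differently. The paper controls, for each fixed $k\le k_0$, the counts $N_k(A)$ of vertices in components of order exactly $k$ with types in $A$, together with the binomially distributed number of all vertices with types in $A$, and it invokes the local coupling of \cite{clustering} only for \emph{bounded} kernels, transferring to general integrable $\kk$ via the observation that adding or deleting a single edge changes the set of vertices in components of order $k$ in at most $2k$ places. You instead work directly with $N_{\ge K}(A)$ and prove its uniform concentration by exchangeability plus Chebyshev, with the mean and the covariance controlled by one- and two-vertex branching-process limits applied to the general (possibly unbounded) kernel; the gain is that uniformity in $A$ is automatic from the $A$-free covariance bound, the cost is that the asymptotic independence of the two truncated explorations given the root types --- which you correctly flag as the main remaining work --- is precisely where unboundedness bites: the ``therefore disjoint \whp'' claim needs an argument when $\kk$ can be large on sets of small measure, and it is not a stated result of \cite{kernels} or \cite{clustering} in this conditioned, unbounded form. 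If the direct coupling proves awkward, you can graft on the paper's truncation device: run your second-moment argument for the bounded kernel $\kk^M$, where the one- and two-vertex couplings are unproblematic, and then compare $\gnkx$ with $\gnxx{\kk^M}$, noting that each differing edge changes the status ``$|\cC(v)|\ge K$'' of at most $2K$ vertices and that the expected number of differing edges is at most $\tfrac n2\int_{\kk>M}\kk\,\dd\mu\,\dd\mu$, which can be made smaller than any prescribed multiple of $n/K$ by taking $M$ large; this preserves uniformity in $A$ and brings your route essentially in line with the paper's treatment of unbounded kernels.
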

Note that the first statement corresponds to Theorem 9.10 of~\cite{kernels}, but, 
due to the different conditions, is not implied by it.
\begin{proof}
It suffices to prove the second statement.
Fix $\eps>0$ once and for all,
and choose $k_0$ so that $\rho_{\ge k_0}(\kk) \le \rho(\kk)+\eps/6$;
this is possible since $\rho_{\ge k}(\kk)\downto \rho(\kk)$ as $k\to\infty$.

We start by considering components of a fixed size.
Let $N_k(A)$ denote the number of vertices $i$ of $G_n$ such that $i$ is
in a component of order $k$ and $x_i\in A$.
If $\kk$ is bounded, then using the local coupling argument in
\cite[Section 3]{clustering}
it is easy to check that for each $k$ we
have $N_k(A)/n \pto \rho_k(A)\=\int_A \rho_k(x) \dd\mu(x)$,
uniformly in $A$.
Using the fact that adding or deleting an edge from a graph
$G$ changes the set of vertices in components of size $k$
in at most $2k$ places, and arguing as in~\cite{kernels}, the same statement
for general $\ka$ follows easily.

Summing over $k\le k_0$, we thus have $N_{\le k_0}(A)/n \pto \rho_{\le k_0}(A)$.
In particular,
\begin{equation}\label{sA}
 \P\bigpar{ |N_{\le k_0}(A)/n - \rho_{\le k_0}(A) | \ge \eps/5 } \le \eps/3
\end{equation}
for all large enough $n$ and all measurable $A$.

By a {\em medium component} of $G_n$ we mean any component
of size greater than $k_0$ other than $\cC_1(G_n)$. Let $M$ denote the number of vertices
in medium components, and $M(A)$ the number with types in $A$.
Since $N_k(G_n)/n\pto \rho_k$ for each $k$ and $|\cC_1(G_n)|/n\pto \rho(\kk)$,
we have $M(G_n)/n\pto \rho_{\ge k_0+1}(\kk) -\rho(\kk)\le \eps/6$.
Hence, \whp{}
\begin{equation}\label{mA}
 \sup_A M(A) = M(G_n) \le \eps n/5.
\end{equation}
Let $\#(A)$ denote the number of vertices with types in $A$.
Then $\#(A)$ has a binomial distribution with parameters $n$ and $\mu(A)$,
so for $n$ large enough we have
\begin{equation}\label{aA}
 \P\bigpar{ |\#(A)/n - \mu(A) | \ge \eps/5 } \le \eps/3
\end{equation}
for all $A$.
Finally, let $C_1(A)=n\nuni(A)$ denote the number of vertices in $\cC_1(G_n)$ with types in $A$.
Then
\begin{equation}\label{C1sum}
 C_1(A) = \#(A) - N_{\le k_0}(A) -M(A) +O(1),
\end{equation}
with the final $O(1)$ correction term accounting for the possibility
that $|\cC_1(G_n)|\le k_0$, so the `giant' component is `small'.

Combining equations \eqref{sA}--\eqref{C1sum}, we see that
\begin{equation*}
 \P\bigpar{ |C_1(A)/n - (\mu(A)-\rho_{\le k_0}(A)) | \ge 4\eps/5 } \le \eps
\end{equation*}
for all large enough $n$ and all $A$. But
\begin{equation*}
 \mu(A)- \rho_{\le k_0}(A) = \mu_\kk(A) + \sum_{k=k_0+1}^\infty \rho_k(A).
\end{equation*}
The sum above is at least $0$ but, by choice of $k_0$, at most $\eps/6$, so 
$\mu(A)- \rho_{\le k_0}(A)$ is within $\eps/6$ of $\mu_\kk(A)$ and the result follows.
\end{proof}

In~\cite[Theorem 1.4]{cutsub},
it was shown (in a slightly different setting) 
that stability of the giant component
under deletion of vertices implies that the distribution of the size of the giant
component has an exponential tail. 
Parts of this argument adapt easily to the present setting. 

First, Lemma 1.7 of~\cite{cutsub}
shows that if $\kk$ is a kernel, then the $n$-by-$n$ matrices
obtained by sampling $\kk$ at \iid\ points $x_1,\ldots,x_n$ converge
in probability to $\kk$, with respect to the cut norm. This implies
that all results of~\cite{cutsub} asserting that a certain
conclusion holds \whp{} apply to the corresponding
random graphs (see~\cite[Remark 1.5]{cutsub}).
In particular, Theorem 1.3 of~\cite{cutsub} implies the following result.

\begin{theorem}\label{stab}
Let $\kk$ be an irreducible, integrable kernel
on an \iid{} vertex space $\cV$, and let $G_n=\gnkx$.
For every $\eps>0$ there is a $\delta>0$ such that \whp{} we have
\begin{equation*}
 \rho(\kk)-\eps \le |\cC_1(G_n')|/n \le \rho(\kk)+\eps
\end{equation*}
for every graph $G_n'$ that may be obtained from $G_n$ by deleting
at most $\delta n$ vertices and their incident edges, and then adding
or deleting at most $\delta n$ edges.\qed
\end{theorem}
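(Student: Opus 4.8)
The final statement is set up in the paragraph preceding it to be a direct consequence of \cite[Theorem 1.3]{cutsub}, the corresponding stability result for the random graphs $G(A_n)$: for a \qir{} kernel $\kk$ and any $\eps>0$ there is a $\delta>0$ such that, whenever $\dcut(A_n,\kk)\to 0$, \whp{} every graph obtained from $G(A_n)$ by deleting at most $\delta n$ vertices together with their incident edges and then adding or deleting at most $\delta n$ edges has largest component of size in $[(\rho(\kk)-\eps)n,(\rho(\kk)+\eps)n]$. The plan is therefore simply to transport this to $G_n=\gnkx$.

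First I would realise $G_n=\gnkx$ as $G(A_n)$ for a suitable random matrix: take $A_n=(a_{ij})$ with $a_{ij}=\kk(x_i,x_j)$ for $i\ne j$ and $a_{ii}=0$, where $x_1,\dots,x_n$ are the \iid{} types of $\cV$. Since the edge probability $\min(\kk(x_i,x_j)/n,1)=\min(a_{ij}/n,1)$ in the definition of $\gnkx$ is exactly the one defining $G(A_n)$, the two random graphs can be coupled to be identical. By \cite[Lemma 1.7]{cutsub} we have $\dcut(A_n,\kk)\pto 0$ (here $A_n$ is viewed as the associated piecewise-constant kernel, and $\kk$ as a kernel on its own probability space $(\sss,\mu)$, which is legitimate since $\dcut$ compares kernels on possibly different spaces), and irreducibility of $\kk$ is exactly the hypothesis required by \cite[Theorem 1.3]{cutsub}. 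The one remaining point is that \cite[Theorem 1.3]{cutsub} is phrased with the deterministic hypothesis $\dcut(A_n,\kk)\to 0$, whereas we only have convergence in probability; this gap is closed exactly as in \refSSS{sss_cc}, by passing to a coupling in which $\dcut(A_n,\kk)\to 0$ almost surely and then conditioning on $(A_n)$, which is precisely the content of \cite[Remark 1.5]{cutsub}. Combining, the \whp{} conclusion of \cite[Theorem 1.3]{cutsub} holds for $G(A_n)$, hence for $\gnkx$, which is the assertion.

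There is no substantial obstacle here: the genuine work — the stability estimate, resting on continuity of the giant-component fraction $\rho$ in the cut metric — is carried out in \cite{cutsub}. The only things needing attention are bookkeeping: checking that the perturbation model of \cite[Theorem 1.3]{cutsub} is at least as permissive as the one in the statement here (deletion of at most $\delta n$ vertices with their incident edges, followed by addition or deletion of at most $\delta n$ edges), up to at worst replacing $\delta$ by a fixed multiple of it; and noting that in the \iid{} setting one does have $|\cc1(G_n)|/n\pto\rho(\kk)$ to begin with, by \eqref{giant}, available since $\kk$ is \qir{}, so that the two-sided bound $\rho(\kk)-\eps\le|\cc1(G_n')|/n\le\rho(\kk)+\eps$ is indeed the natural statement.
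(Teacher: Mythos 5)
Your argument is correct and is essentially the paper's own: the paper likewise derives Theorem~\ref{stab} immediately by sampling $\kk$ at the \iid{} types to get matrices with $\dcut(A_n,\kk)\pto 0$ (Lemma 1.7 of \cite{cutsub}), transferring \whp{} statements via \cite[Remark 1.5]{cutsub}, and invoking the stability result \cite[Theorem 1.3]{cutsub}, which is why it is stated with no separate proof. Your extra remarks on the coupling/conditioning step and on matching the perturbation models are just the bookkeeping the paper leaves implicit.
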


Using this result, it is easy to get our exponential lower tail bound.
Unfortunately, there is a minor complication, due to the possible (but
very unlikely) non-uniqueness of the giant component. 

Let $\tC_1(A)=\tC_1(A;G_n)$ denote the maximum over components $\cC$ of $G_n$
of the number of vertices of $\cC$ with types in $A$,
so $\tC_1(A)$ is within $|\cC_2(G_n)|$ of $C_1(A)=n\nuni(A)$.

\begin{lemma}\label{gceb} 
Let $\kk$ be an irreducible, integrable kernel on an \iid{} vertex space $\vxs=(\sss,\mu,\xss)$
with $\norm\tk>1$,
and let $\eps>0$. Then there is a $c=c(\kk,\eps)>0$
such that for all large enough $n$, for
every subset $A$ of $\sss$ we have
\begin{equation}\label{nnts}
 \P\bigpar{ \tC_1(A;G_n) \le (\mu_\kk(A)-\eps)n } \le e^{-cn}.
\end{equation}
\end{lemma}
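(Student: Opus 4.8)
The plan is to bound $\tC_1(A;G_n)$ from below by the number of type‑$A$ vertices lying in the \emph{largest} component $\cC_1=\cC_1(G_n)$. Fix a large integer $k_0$ (to be chosen), call a component \emph{small} if it has order $\le k_0$, and \emph{medium} if it has order $>k_0$ but is not $\cC_1$ (the terminology of the proof of \refL{gcdist}). Writing $\#(A)$ for the number of vertices with type in $A$, $N_{\le k_0}(A)$ for the number of type‑$A$ vertices in small components, and $M$ for the number of vertices in medium components, every type‑$A$ vertex outside $\cC_1$ lies in a small or a medium component, so deterministically
\[
 \tC_1(A;G_n)\ \ge\ \#(A)-N_{\le k_0}(A)-M .
\]
I will show that, with probability $1-e^{-cn}$, all of $\#(A)$, $N_{\le k_0}(A)$, $N_{\le k_0}(\sss)$ and $|\cC_1|$ are close to their ``expected'' values, and that on this event the right‑hand side is at least $(\mu_\kk(A)-\eps)n$.

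Concretely, (we may assume $\eps<\rho(\kk)$, as decreasing $\eps$ only strengthens the claim, and) choose $k_0$ so large that $\rho_{\ge k_0+1}(\kk)-\rho(\kk)\le\eps/2$ (possible since $\rho_{\ge k}(\kk)\downto\rho(\kk)$), and put $\gamma=\eps/8$. I claim that each of
\[
 \#(A)\ge(\mu(A)-\gamma)n,\qquad |\cC_1|\ge(\rho(\kk)-\gamma)n,
\]
\[
 N_{\le k_0}(A)\le\bigpar{\rho_{\le k_0}(A)+\gamma}n,\qquad N_{\le k_0}(\sss)\ge\bigpar{\rho_{\le k_0}(\kk)-\gamma}n
\]
holds with probability $1-e^{-c'n}$ for some $c'=c'(\kk,\eps)>0$ and all large $n$; here $\rho_{\le k_0}(A)=\int_A\sum_{k\le k_0}\rho_k(\kk;x)\dd\mu$. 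The first is a Chernoff bound, since $\#(A)\sim\Bi(n,\mu(A))$. For the bounds on $N_{\le k_0}$, each $N_k(A)$ is a sum over vertices of indicators depending only on the component of the vertex up to order $k$; after the usual reduction to bounded kernels (truncate $\kk$, use that adding or deleting an edge changes every $N_k$ in at most $2k$ places, and argue as in the proof of \refL{gcdist}), the exponential concentration of $N_k(A)$ about $n\int_A\rho_k(\kk;x)\dd\mu$ is a standard concentration estimate for sparse random graphs, and summing over $k\le k_0$ gives the two bounds. The bound on $|\cC_1|$ is the exponential lower tail for the giant: \refT{stab} gives $|\cC_1|\ge(\rho(\kk)-\gamma)n$ only \whp, and this is boosted to an exponential bound by the two‑round (``sprinkling'') argument behind \cite[Theorem 1.4]{cutsub}, which adapts to the present \iid\ setting (using \refL{gcdist} and the fact that the matrices obtained by sampling $\kk$ at the $x_i$ converge to $\kk$ in $\dcut$, so that the results of \cite{cutsub} apply).

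On the intersection of these events (of probability $1-e^{-cn}$ after adjusting the constant, since only finitely many events are involved) we have $|\cC_1|\ge(\rho(\kk)-\gamma)n>k_0$ for large $n$, hence $M=\bigpar{n-N_{\le k_0}(\sss)}-|\cC_1|\le\bigpar{\rho_{\ge k_0+1}(\kk)-\rho(\kk)+2\gamma}n$; combining this with the other three bounds, and using $\mu(A)-\rho_{\le k_0}(A)=\int_A\rho_{\ge k_0+1}(\kk;x)\dd\mu\ge\mu_\kk(A)$ together with the choices of $k_0$ and $\gamma$, one obtains $\tC_1(A;G_n)\ge(\mu_\kk(A)-\eps)n$, which is \eqref{nnts}. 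The one genuinely substantive point is the exponential lower tail for $|\cC_1|$ in the second estimate above: \refT{stab} by itself only yields a \whp{} statement, and the exponential improvement requires importing — and checking the transfer of — the sprinkling argument of \cite[Theorem 1.4]{cutsub}; the remaining ingredients (Chernoff bounds and the concentration of the local quantities $N_k$) are routine.
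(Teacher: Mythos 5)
Your overall architecture is different from the paper's, and it runs into an obstruction that the paper itself points out immediately after this lemma. You reduce \eqref{nnts} to four separate exponential concentration claims, and at least one of them is false for general integrable kernels: the lower bound $N_{\le k_0}(\sss)\ge(\rho_{\le k_0}(\kk)-\gamma)n$ with failure probability $e^{-c'n}$. As the paper notes right after \refL{gceb}, there exist irreducible, supercritical, integrable $\ka$ (e.g.\ rank-one kernels with a heavy-tailed $\psi$) for which $\P(|\cC_1(G_n)|=n)$ is only \emph{polynomially} small; on that event $N_{\le k_0}(\sss)=0$, while $\rho_{\le k_0}(\kk)\ge\rho_1(\kk)>0$, so no exponential bound is available. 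The same obstruction blocks any route to bounding $M$ that passes through an exponential upper tail for the number of vertices in components of size $>k_0$. Your appeal to ``standard'' exponential concentration of $N_k(A)$ is also shaky even for the upper tail: $N_k$ is Lipschitz in single \emph{edge} changes (constant $2k$), but there are $\binom n2$ edge variables, and it is \emph{not} Lipschitz under modifying all edges at one vertex (one vertex can glue together linearly many small components), so neither edge- nor vertex-exposure McDiarmid applies directly. Finally, the one piece you correctly flag as substantive --- the exponential lower tail for $|\cC_1|$ --- is not an input you can import and then bootstrap from: it is precisely the $A=\sss$ case of the lemma, and ``checking the transfer'' of the argument of \cite[Theorem 1.4]{cutsub} is the entire content of the proof.

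The paper avoids all of this by applying the bounded-differences inequality \emph{once, directly to the target quantity}, for each fixed $A$. Define $D_A(G)$ to be the minimum number of vertices one must delete from $G$ so that the resulting graph $G'$ satisfies $\tC_1(A;G')\le(\mu_\kk(A)-\eps)n$; then \eqref{nnts} is the statement $\P(D_A(G_n)=0)\le e^{-cn}$. The two \whp{} inputs (\refL{gcdist}, giving that $\cC_1(G_n)$ \whp{} contains $(\mu_\kk(A)-\eps/2)n$ type-$A$ vertices, and \refT{stab}, stability under deleting $\delta n$ vertices) are used only to show $\E D_A(G_n)\ge\delta n/2$; no exponential concentration of any auxiliary quantity is needed. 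The key structural point is that the condition $\tC_1(A;G')\le(\mu_\kk(A)-\eps)n$ is preserved by deleting vertices, which makes $D_A$ change by at most $1$ when the type of a single vertex and all its incident edge variables are resampled --- so McDiarmid applies over $n$ independent blocks and yields $\P(D_A=0)\le e^{-2(\E D_A)^2/n}\le e^{-\delta^2n/2}$. I recommend you rework the proof along these lines; your decomposition cannot be repaired within the hypotheses of the lemma.
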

\begin{proof}
Fix $A$. Given a graph $G$ on $[n]$ where each vertex has a type in $\sss$,
let $D(G)=D_A(G)$ be the minimum number of vertices that must be deleted from $G$
so that in the resulting graph $G'$ we have
\begin{equation}\label{tC1}
 \tC_1(A;G')\le (\mu_\kk(A)-\eps)n,
\end{equation}
so our aim is to bound $\P(D(G_n)=0)$.
By Lemma~\ref{gcdist}, \whp{} $\cC_1(G_n)$ has at least $(\mu_\kk(A)-\eps/2)n$
vertices with types in $A$. Also, by \refT{stab}, there is some $\delta>0$
such that \whp{} deleting at most $\delta n$ vertices of $G_n$ removes
less than $\eps n/2$ vertices from the (\whp{} unique) giant component.
It follows that $\E D(G_n) \ge \delta n/2$ for $n$ large;
moreover, this bound is uniform in $A$.

Since the condition \eqref{tC1} is preserved by deleting vertices,
if $G''$ is obtained from $G$ by adding
and deleting edges all of which are incident with one vertex $i$,
and also perhaps changing the type of $i$, then $|D(G)-D(G'')|\le 1$.
We may construct $G_n$ by taking independent variables $x_1,\ldots,x_n$
and $\{y_{ij}: 1\le i<j\le n\}$ all of which are uniform on $[0,1]$,
and joining $i$ to $j$ if and only if $y_{ij}\le \kk(x_i,x_j)/n$.
Modifying the variables in $S_j= \{x_j\}\cup \{y_{ij}: i<j\}$ affects
only edges incident with vertex $j$. Considering the values of all
variables in $S_j$ as a single random variable $X_j$, 
we see that $D(G_n)$ is a Lipschitz function of $n$ independent
variables, so by McDiarmid's inequality~\cite{McD}
we have
\begin{equation*}
 \P\bigpar{ D(G_n)=0 } \le e^{-2 (\E D(G_n))^2/n}  \le e^{-\delta^2 n/2},
\end{equation*}
completing the proof.
\end{proof}

It would be nice to have an exponential bound on the upper tail of the number of 
vertices in `large' components.  Unfortunately, the argument
in~\cite{cutsub} does not seem to go through. Indeed, the
corresponding result is false in this setting without an additional
assumption: it is easy to find a $\ka$ for which there is a small, but
only polynomially small, chance that some vertex $v$ has degree of
order $n$. In this way one can even arrange that $\P(|\cC_1(G_n)|=n)$
is only polynomially small in $n$.

The next lemma is the combinatorial heart of the proof of \refT{Tsup}.
Unfortunately, we cannot bound the expectation of $\susq$ directly, only
the contribution from components up to size some small constant times $n$.
Formally, given a graph $G$ with $n$ vertices and a $\delta>0$, let
\begin{equation}\label{susqd}
  \susqd(G)\=\frac1n\sum_{v\in V(G) \,:\, |\cC(v)|\le\delta n} |\cC(v)|
 = \frac1n\sum_{i\,:\,\ccci\le \delta n} \ccci^2.
\end{equation}
Note that if $\ccc2\le\delta n< \ccc1$, then $\susqd(G)=\susq(G)$.

Given a kernel $\kk$ and an $M>0$, we write $\kk^M$ for the pointwise minimum of $\kk$ and $M$.

\begin{lemma}\label{sc}
Let $\kk$ be an irreducible, integrable kernel on an \iid{} vertex space $\cV$
with $\norm\tk>1$, and let $\eps>0$ and $M>0$.
Then there is a $\delta=\delta(\eps,M,\kk)>0$
such that
\begin{equation*}
 \E \susqd(\gnkx) \le \musssqw\sumj\innprod{\tkqq^j1,1}_{\muqq} +o(1),
\end{equation*}
where $\muqq$ is the measure on $\sss$ defined by
$\dd\muqq(x) = f(x)\dd\mu$ with 
\begin{equation}\label{fdef}
 f(x)=\bigpar{1-\rho\bigpar{(1-\eps)\ka^M;x}+5\eps} \wedge 1,
\end{equation}
and $\tkqq$ is the integral operator on $(\sss,\muqq)$ with kernel $\ka$.
\end{lemma}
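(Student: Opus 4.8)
The plan is to bound $n\susqd(\gnkx)=\sum_{i\,:\,\ccci\le\delta n}\ccci^2$ by counting the paths that lie inside a component of order at most $\delta n$, and to show that each of the $\ell+1$ vertices of such a path of length $\ell$ contributes, on average, a factor at most $f$, because typically such a vertex is \emph{not} joined to the giant component of the rest of the graph. Since $\cV$ is an \iid{} vertex space, $\mu$ is a probability measure, so $\musssqw=1$; and since $\ka$ is supercritical and irreducible, $\rhoka>0$. If $5\eps\ge1$ or $\eps\ge\rhoka$ then $f\equiv1$ and $\muqq=\mu$, so the assertion is weaker than \refT{Tiid}; we may therefore assume $\eps$ as small as we wish. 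We may also assume $\sumj\innprod{\tkqq^j1,1}_{\muqq}<\infty$. Fix $\eps'=\eps'(\eps)>0$ with $e^{\eps'}\le1+5\eps$, and then a small $\delta=\delta(\eps,M,\ka)>0$ with $\delta<\rhoka-\eps$; the remaining smallness requirements on $\delta$ will emerge below.

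For each ordered pair of vertices lying in a common component of order at most $\delta n$ there is a path of length at most $\delta n$ joining them inside that component; selecting one such path per pair gives an injection into the set of paths lying in small components, so $n\susqd(\gnkx)\le\sum_{\ell=0}^{\floor{\delta n}}\hat P_\ell$, where $\hat P_\ell$ denotes the number of paths $v_0v_1\cdots v_\ell$ in $\gnkx$ with $|\cC(v_0)|\le\delta n$. Fix distinct vertices $v_0,\dots,v_\ell$ with $\ell\le\delta n$ and condition on their types; let $H=\gnkx-\{v_0,\dots,v_\ell\}$, which, given these types, has \iid-typed vertices and does not involve the edges incident to the $v_i$. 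If $|\cC_1(H)|>\delta n$, then the event $|\cC(v_0)|\le\delta n$ forces that no $v_i$ is joined to $\cC_1(H)$; since the edge-sets from the distinct $v_i$ to $\cC_1(H)$ are disjoint and $\min\{\ka(x,y)/n,1\}\ge\ka^M(x,y)/n$ once $n\ge M$,
\[
 \P\bigl(\text{no }v_i\text{ joined to }\cC_1(H)\bigm| H\bigr)\le\prod_{i=0}^\ell\exp\Bigl(-\tfrac1n\sum_{w\in\cC_1(H)}\ka^M(x_{v_i},x_w)\Bigr).
\]
Let $\cE_H$ be the event that $|\cC_1(H)|\ge(\rhoka-\eps)n$ and $\tfrac1n\sum_{w\in\cC_1(H)}\ka^M(x_{v_i},x_w)\ge\int\ka^M(x_{v_i},y)\rho(\ka;y)\dd\mu(y)-\eps'$ for all $i\le\ell$. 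On $\cE_H$ we have $|\cC_1(H)|>\delta n$ (as $\delta<\rhoka-\eps$); moreover, since $\rho((1-\eps)\ka^M;\cdot)\le\rho(\ka;\cdot)$ and, from $g=1-e^{-(1-\eps)\Tx{\ka^M}g}$, $(1-\eps)\int\ka^M(x,y)\rho((1-\eps)\ka^M;y)\dd\mu(y)=-\log(1-\rho((1-\eps)\ka^M;x))$, one checks that for each $i$
\[
 \exp\Bigl(-\tfrac1n\sum_{w\in\cC_1(H)}\ka^M(x_{v_i},x_w)\Bigr)\le e^{\eps'}\bigl(1-\rho((1-\eps)\ka^M;x_{v_i})\bigr)^{1/(1-\eps)}\le f(x_{v_i}),
\]
using $1/(1-\eps)\ge1$ and $e^{\eps'}\le1+5\eps$ (when the right side of \eqref{fdef} is truncated to $1$ the bound is trivial). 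Using the bound $1$ on the complement of $\cE_H$, we conclude that the conditional probability, given the types of $v_0,\dots,v_\ell$, that $v_0\cdots v_\ell$ is a path in $\gnkx$ with $|\cC(v_0)|\le\delta n$ is at most $\prod_{i=1}^\ell(\ka(x_{v_{i-1}},x_{v_i})/n)\cdot\bigl(\prod_{i=0}^\ell f(x_{v_i})+\P(\cE_H^\comp)\bigr)$.

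Averaging over the \iid\ types of $v_0,\dots,v_\ell$, summing over the $\le n^{\ell+1}$ choices of distinct vertices, and recalling $\dd\muqq=f\dd\mu$, we get
\[
 \E\hat P_\ell\le n\innprod{\tkqq^\ell1,1}_{\muqq}+\Bigl(\sup_{\text{types}}\P(\cE_H^\comp)\Bigr)\E P_\ell(\gnkx).
\]
The crucial point is that $\sup_{\text{types}}\P(\cE_H^\comp)\le e^{-cn}$ for some $c=c(\eps',M,\ka)>0$, uniformly also over the $\le\delta n+1$ deleted vertices; this follows from the McDiarmid argument of \refL{gceb} applied both to $|\cC_1(H)|$ and, for each $i$, to the functional $\cC\mapsto\tfrac1n\sum_{w\in\cC}\ka^M(x_{v_i},x_w)$ (bounded by $M$, and such that the associated ``minimum number of vertices to delete'' changes by $O(1)$ when one vertex is modified), the required high-probability lower bounds coming from \refT{stab} and from the uniform convergence $\sup_{\normoo{g}\le M}\bigl|\int g\dd\nuni-\int g\dd\mu_\kk\bigr|\pto0$ proved within \refL{gcdist}. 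Since $\ka$ is supercritical, $\norm{\tk}>1$, and since $\tk$ is compact it is bounded, so (by the path count in the proof of \refT{Tiid}) $\E P_\ell(\gnkx)\le n\innprod{\tk^\ell1,1}\le n\norm{\tk}^\ell$. As $\hat P_\ell=0$ for $\ell>\delta n$, it follows that
\begin{align*}
 \E\susqd(\gnkx)&\le\frac1n\sum_{\ell=0}^{\floor{\delta n}}\E\hat P_\ell\le\sumj\innprod{\tkqq^j1,1}_{\muqq}+e^{-cn}\sum_{\ell=0}^{\floor{\delta n}}\norm{\tk}^\ell\\
 &\le\sumj\innprod{\tkqq^j1,1}_{\muqq}+(\delta n+1)e^{-(c-\delta\log\norm{\tk})n},
\end{align*}
and the last term is $o(1)$ provided $\delta<c/\log\norm{\tk}$, proving the lemma.

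The bulk of the work is the uniform exponential bound $\P(\cE_H^\comp)\le e^{-cn}$, with a rate $c$ that does not degrade as $\eps',\delta\to0$ or over the choice of the $\le\delta n+1$ deleted vertices. This forces one to run the bounded-differences and stability arguments of \refL{gceb} and \refT{stab} not just for component sizes but simultaneously for the $\ell+1$ weighted functionals $\cC\mapsto\tfrac1n\sum_{w\in\cC}\ka^M(x_{v_i},x_w)$; the truncation $\ka^M$ is present precisely so that these are bounded and the bounded-differences inequality applies with a rate independent of the tails of $\ka$. A secondary, purely analytic, step is to verify the inequality $\exp(-\tfrac1n\sum_{w\in\cC_1(H)}\ka^M(x_{v_i},x_w))\le f(x_{v_i})$ on $\cE_H$: the accumulated $\eps'$- and $(1-\eps)$-errors must be absorbed by the $+5\eps$ in \eqref{fdef}, which is what dictates the hierarchy $\delta\ll\eps'\ll\eps$ (with $\delta$ also small relative to $\log\norm{\tk}$). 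Once these are in place the bound on the main term is an exact inequality, and no further approximation is needed.
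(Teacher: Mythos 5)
Your overall strategy is the same as the paper's: bound $n\susqd$ by paths confined to small components, and extract a factor $f(x_{v_i})$ for each path vertex from the event that it sends no edge to a large component of the rest of the graph; your analytic step converting the weight lower bound into $f(x_{v_i})$ via \eqref{phik} for $(1-\eps)\ka^M$ is correct. However, your treatment of the error term has a genuine gap. You bound the bad-event contribution by $\sup\P(\cE_H^\comp)\cdot\E P_\ell(\gnkx)$ and then invoke ``since $\tk$ is compact it is bounded'' to get $\E P_\ell\le n\norm{\tk}^\ell$. Compactness (indeed boundedness of $\tk$) is \emph{not} a hypothesis of the lemma; it is assumed only in \refT{Tsup}. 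For a general irreducible integrable kernel with $\norm{\tk}>1$ one may have $\norm{\tk}=\infty$ and even $\innprod{\tk^2 1,1}=\infty$ (e.g.\ $\ka(x,y)=\psi(x)\psi(y)$ with $\psi>0$, $\psi\in L^1\setminus L^2$), and then the only general bound on the number of labelled paths of length $\ell\sim\delta n$ is of order $n^{\ell+1}$, which swamps any $e^{-cn}$. This is exactly the obstacle the paper flags and circumvents with the bound \eqref{njub}: on the bad event it counts only the $(j+1)^2$ ordered pairs of endpoints inside the chosen $(j+1)$-set, paying a factor $\binom{n}{j+1}\le(e/\delta)^{\delta n}$ instead of $n^{j+1}$, and then chooses $\delta$ so small that $(e/\delta)^\delta<e^{c/2}$. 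Without this (or an equivalent) device, your argument proves the lemma only under the extra hypothesis $\norm{\tk}<\infty$.

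Second, the uniform exponential bound $\P(\cE_H^\comp)\le e^{-cn}$ is asserted rather than proved, and the justification offered is not correct: \refL{gcdist} does not prove $\sup_{\normoo{g}\le M}\bigl|\int g\dd\nuni-\int g\dd\mu_\kk\bigr|\pto0$; that statement amounts to total-variation convergence of an empirical measure to a (generally nonatomic) limit and is false, while \refL{gcdist} only gives a bound for each fixed measurable $A$ with an $n_0$ uniform in $A$. The step you are skipping is precisely the technical core of the paper's proof: discretize $\ka^M(y,\cdot)$ into the $O(M/\eps)$ level sets $A_{y,i}$ and apply the per-set exponential bound of \refL{gceb} to each, with a union bound; likewise, \refT{stab} controls component \emph{sizes}, not $\ka^M$-weights, so extending the McDiarmid argument of \refL{gceb} to the functionals $\cC\mapsto n^{-1}\sum_{w\in\cC}\ka^M(y,x_w)$ again requires this discretization (or an explicit accounting that each deleted or lost vertex changes the weight by at most $M/n$). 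A smaller point in the same direction: your target $\int\ka^M(x_{v_i},y)\rho(\ka;y)\dd\mu(y)-\eps'$ uses $\rho(\ka;\cdot)$, whereas the giant component of $H$ (edge probabilities $\ka/n$ but only $n-\ell-1$ vertices) concentrates around the corresponding quantity for a slightly scaled-down kernel; this can be absorbed into your $\delta\ll\eps'\ll\eps$ hierarchy but must be argued. The paper sidesteps both complications by working with a fixed graph $G'$, induced by the last $(1-\eps/M)n$ vertices of the truncated graph $G_n^M$ and hence independent of the path vertices, and by comparing with $(1-\eps)\ka^M$ from the outset.
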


\begin{proof}
As usual, we may and shall assume that $\mu(\sss)=1$.

Note that the statement becomes stronger if
we increase $M$ and/or decrease $\eps$.
Thus we may assume that $(1-\eps)\ka^M$ is supercritical,
and that $\rho((1-\eps)\ka^M)>2\eps$.
We also assume that $M>1$ and $e^{4\eps}<1+5\eps$.

Let $0<\delta<\eps/M$ be a small constant to be chosen later, depending only
on $\kk$, $\eps$ and $M$,
and let $N=n\susqd(G_n)$ 
denote the number of ordered pairs $(v,w)$ of vertices
of $G_n=\gnkx$ such that $v$ and $w$ are in a common component of
size at most $\delta n$.
Also, let $N_j$ denote the number of such pairs joined by a path
of length $j$.
Since $N \le \sum_{j=0}^{\delta n-1} N_j$, it suffices
to show that for $0\le j< \delta n$ we have
\begin{equation}\label{Nrb}
 \E N_j/n \le \innprod{\tkqq^j1,1}_{\muqq} +o(1/n),
\end{equation}
with the error bound uniform in $j$.

We may bound $N_j$ by the number of paths of length $j$ in $G_n$
lying in components with at most $\delta n$ vertices.
Thus $\E N_j$ is at most $n^{j+1}$ times the probability
that $12\cdots (j+1)$ forms such a path.
Let $V'$ consist of the last $(1-\eps/M)n$ 
vertices of $G_n$.
Coupling $G_n$ and $G_n^M=G^{\vxs}(n,\ka^M)$ in the usual way
so that $G_n^M\subseteq G_n$, let $G'$ be the subgraph
of $G_n^M$ induced by $V'$, noting that $G'\subset G_n$.
Let $\cA=\cA_j$ 
be the event that $12\cdots (j+1)$ forms
a path in $G_n$, and let $\cB=\cB_j$ be the event
that some vertex in $[j+1]$ is joined by an edge of $G_n^M$ to 
some component of $G'$ of order at least $\delta n$.
Then
\begin{equation*}
 \E N_j \le n^{j+1} \P(\cA\cap \cB^\comp).
\end{equation*}
Unfortunately, we cannot quite prove the estimate we need
for the right hand side above, so instead we use the less
natural but stronger bound
\begin{equation}\label{njub}
 \E N_j \le \binom{n}{j+1} \E( N_j' 1_{\cB^\comp} ),
\end{equation}
where $N_j'$ is the number of ordered pairs $(v,w)$
of vertices in $V_0=[j+1]$ such that $v$ and $w$ are joined
in $G_n$ by a path of length $j$ lying in $V_0$ (and
thus visiting all vertices of $V_0$).

Roughly speaking, the idea is to show that
with very high probability  $\cC_1(G')$ will contain
almost the `right' number of vertices of each type, so that
given the type $y$ of one of the first $j+1$ vertices,
its probability of sending an edge to $\cC_1(G')$
is almost what it should be, namely $\rho((1-\eps/M)\ka^M;y)$.
Unfortunately we cannot achieve this for all $y$,
but we can achieve it for $\{x_1,\ldots,x_{j+1}\}$,
which is all we need. Also, rather than working with $\cC_1(G')$,
we work with the union of all components of order at least $\delta n$.

Let $n'=(1-\eps/M) n$. Ignoring the irrelevant rounding
to integers, $G'$ has the distribution of $\gxxx{n'}{(1-\eps/M)\ka^M}$,
which dominates that of $\gxxx{n'}{(1-\eps)\ka^M}$. 

Recall that $(1-\eps)\ka^M$ is supercritical and that $\rho((1-\eps)\ka^M)>2\eps$.
Applying \refL{gceb} to $G'=\gxxx{n'}{(1-\eps)\ka^M}$ we find that 
there is some $c>0$ such that
for any measurable $A\subset \sss$ we have 
\begin{equation}\label{npAold}
  \P\bigpar{ \tC_1(A;G') \le (\mu'(A)-2\eps/M)n } 
 \le  \P\bigpar{ \tC_1(A;G') \le (\mu'(A)-\eps/M)n' } 
 \le e^{-cn},
\end{equation}
where $\mu'=\mu_{(1-\eps)\kk^M}$.

Let 
\begin{equation*}
 \delta_0=\min\bigset{\eps/M,1/10}>0,
\end{equation*}
and fix $0<\delta<\delta_0$ chosen small enough that
\begin{equation}\label{edd}
 (e/\delta)^\delta<e^{c/2}.
\end{equation}
Let $L$ denote the union of all components of $G'$ of order 
at least $\delta n$,
and let $L(A)$ be the number of vertices
in $L$ with types in $A$.
If $\mu'(A)\ge3\eps/M$ and $\tC_1(A;G')\ge (\mu'(A)-2\eps/M)n$,
then since the final quantity is at least $\delta n$ we have $L(A)\ge\tC_1(A;G')$.
Using \eqref{npAold}, it follows that
\begin{equation}\label{npAnew}
  \P\bigpar{ L(A) \le (\mu'(A)-3\eps/M)n } \le e^{-cn}
\end{equation}
for any $A$; the condition is vacuous if $\mu'(A)< 3\eps/M$.

Given $y\in\sss$ and $i\ge 0$, let $A_{y,i}=\{x\in \sss: \ka^M(x,y)\ge \eps i\}$.
Let $\cE_y$ be the event that
$L(A_{y,i})/n \ge \mu'(A_{y,i})-3\eps/M$ holds for all $i$ with
$1\le i\le M/\eps$.
Applying \eqref{npAnew} $M/\eps=O(1)$ times, we see that
\begin{equation}\label{PEy}
 \P(\cE_y^\comp) \le (M/\eps)e^{-cn}=O(e^{-cn}).
\end{equation}

If $\cE_y$ holds, then
\begin{equation*}
 \sum_{v\in L} \ka^M(x_v,y) \ge \sum_{i=1}^{M/\eps} L(A_{y,i})\eps
 \ge \sum_{i=1}^{M/\eps} \eps (\mu'(A_{y,i}) - 3\eps/M) n
 \ge n\sum_{i=1}^{M/\eps} \eps \mu'(A_{y,i}) - 3\eps n.
\end{equation*}
Now $A_{y,i}$ is empty for $i>M/\eps$, so we have
\begin{multline*}
 \sum_{i=1}^{M/\eps} \eps \mu'(A_{y,i}) = \sum_{i=1}^\infty \eps \mu'\{x: \ka^M(x,y)\ge \eps i\}
 = \int_\sss \eps \floor{\ka^M(x,y)/\eps} \dd\mu'(x) \\
 \ge \int_\sss \ka^M(x,y)\dd\mu'(x) -\eps
 = \int_\sss \ka^M(x,y)\rho((1-\eps)\ka^M;x) \dd\mu(x) -\eps.
\end{multline*}
Putting these bounds together, writing $\ka'$ for $(1-\eps)\ka^M$, we have
\begin{eqnarray*}
 \sum_{v\in L} \ka^M(y,x_v)/n 
 &\ge& 
 \int_\sss \ka^M(x,y)\rho(\ka';x) \dd\mu(x) - 4\eps \\
 &=& (T_{\ka^M}\rho_{\ka'})(y)-4\eps 
 \ge (T_{\ka'}\rho_{\ka'})(y)-4\eps.
\end{eqnarray*}
Recalling that $\ka'$
is supercritical, from \eqref{phik} we have
$T_{\ka'}\rho_{\ka'} = -\log(1-\rho_{\ka'})$, so 
when $\cE_y$ holds we have
\begin{equation*}
  \sum_{v\in L} \ka^M(y,x_v)/n \ge -\log(1-\rho(\ka';y)) -4\eps,
\end{equation*}
and hence
\[
  \prod_{v\in L} (1-\ka^M(y,x_v)/n) \le (1-\rho(\ka';y))e^{4\eps}
 \le 1-\rho(\ka';y) + 5\eps.
\]
Since $\ka^M$ is bounded by $M$, and the product is always at most $1$,
it follows that if $\cE_y$ holds and $n\ge M$, then
\begin{equation}\label{jtor}
  \prod_{v\in L} \bigpar{1- (\ka^M(y,x_v)/n \wedge 1)} \le f(y).
\end{equation}

Let $\cE=\cE_{x_1}\cap \cdots\cap \cE_{x_{j+1}}$. Note that $G'$ is independent
of $x_1,\ldots,x_{j+1}$. Given these types,
from \eqref{PEy} we have $\P(\cE) = 1-O(je^{-cn}) = 1- O(ne^{-cn})$,
with the implicit constant independent of the types. Hence, we have
$\P(\cE)=1-O(ne^{-cn})$ unconditionally.
Then, for $j\le\gd n$,
\begin{equation}\label{noE}
 \binom{n}{j+1}\E( N_j' 1_{\cE^\comp} ) \le \binom{n}{j+1} (j+1)^2 \P(\cE^\comp)
 \le (e/\delta)^{\delta n} n^2 \P(\cE^\comp) = o(1),
\end{equation}
using \eqref{edd} in the last step.

Estimating $N_j'$ by the number of paths of length $j$
lying in $V_0$,
\begin{equation}\label{yesE}
 \binom{n}{j+1}\E( N_j' 1_{\cB^\comp\cap \cE})
  \le \binom{n}{j+1} (j+1)! \P(\cA\cap \cB^\comp\cap \cE) \le n^{j+1} \P(\cA\cap \cB^\comp\cap \cE).
\end{equation}
To estimate the final probability let us condition on $G'$ and also on the vertex
types $x_1,\ldots,x_{j+1}$, assuming as we may that $\cE$ holds.
Note that we have not yet `looked at' edges within $V_0$, or edges from $V_0$ to $V'$.
The conditional probability of $\cA$ is then exactly
\begin{equation*}
 \prod_{i=1}^{j} (\kk(x_i,x_{i+1})/n \wedge 1) \le n^{-j} \prod_{i=1}^j \kk(x_i,x_{i+1}).
\end{equation*}
For each $i\le j+1$, since $\cE_{x_i}$ holds we have from \eqref{jtor}
that the probability that $i$ sends no edge to $L$ is at most
$f(x_i)$.
These events are (conditionally) independent for different $i$, so
\begin{equation*}
 \P(\cA\cap \cB^\comp\cap \cE \mid x_1,\ldots,x_{j+1})
 \le n^{-j}\prod_{i=1}^j \kk(x_i,x_{i+1}) \prod_{i=1}^{j+1} f(x_i).
\end{equation*}
Integrating out we find that
\begin{eqnarray*}
 n^{j+1} \P(\cA\cap \cB^\comp\cap \cE)
 &\le& n\int_{\sss^{j+1}} \prod_{i=1}^j \kk(x_i,x_{i+1}) \prod_{i=1}^{j+1} f(x_i) 
 \dd\mu(x_1)\cdots\dd\mu(x_{j+1}) \\
  &=& n \innprod{\tkqq^j1,1}_{\muqq}.
\end{eqnarray*}
From \eqref{yesE} it follows that $\binom{n}{j+1}\E( N_j' 1_{\cB^\comp\cap \cE}) \le n\innprod{\tkqq^j1,1}_{\muqq}$.
Combined with \eqref{noE} and \eqref{njub} this establishes \eqref{Nrb}; as
noted earlier, the result follows.
\end{proof}

Taking, say, $M=1/\eps$ and defining $f_\eps(x)$ by \eqref{fdef},
as $\eps\to 0$ we have $(1-\eps)\ka^M\upto \ka$ pointwise,
and hence $\rho((1-\eps)\ka^M;x)\upto \rho(\ka;x)$ pointwise.
Thus $f_\eps(x)\downto 1-\rho(\ka;x)$  pointwise.
If we know that 
$\innprod{\tkqq^j1,1}_{\muqq}<\infty$ for some $\eps>0$, then
by dominated convergence it follows that
$ \innprod{\tkqq^j1,1}_{\muqq} \downto  \innprod{\tkq^j1,1}_{\muq}$.
Furthermore, if we have
\begin{equation}\label{need}
 \sumj \innprod{\tkqq^j1,1}_{\muqq}<\infty
\end{equation}
for some $\eps>0$, then
by dominated convergence, as $\eps\to0$ we have
\begin{equation*}
 \sumj\innprod{\tkqq^j1,1}_{\muqq} \downto  \sumj\innprod{\tkq^j1,1}_{\muq} = \susq(\ka).
\end{equation*}
Unfortunately we need some assumption on $\ka$ to establish \eqref{need}.

\begin{proof}[Proof of \refT{Tsup}]
Suppose for the moment that \eqref{need} holds for some $\eps>0$,
where $\muqq$ is defined using $f_\eps(x)$, which is in turn
given by \eqref{fdef} with $M=1/\eps$, say.

By the comments above, it follows that, given any $\eta>0$,
choosing $\eps$ small enough and $M$ large enough
we have $\sumj\innprod{\tkqq^j1,1}_{\muqq}\le\susq(\kk)+\eta$.
\refL{sc} then gives $\E \susqd(G_n)\le \susq(\kk)+2\eta$
if $n$ is large enough, for some $\delta=\delta(\eta)>0$.
Hence, if $\delta=\delta(n)$ tends to zero,
we have
\begin{equation}\label{qdb}
 \limsup \E \susqd(G_n)\le \susq(\kk).
\end{equation}

Since $\kk$ is supercritical we have $\rho(\kk)>0$,
and by \eqref{giant} we have $|\cc1(G_n)|\ge \rho(\kk)n/2$ whp.
For any fixed $\delta>0$, by \eqref{2nd} we have $|\cc2(G_n)|<\delta n$ whp;
this also holds if $\delta=\delta(n)$ tends to zero sufficiently slowly.
Given a function $\delta(n)$, let $\cE_n$ be the event that
$|\cc2(G_n)|\le n \delta(n)< |\cc1(G_n)|$. Then, provided $\delta(n)$
tends to zero slowly enough, $\cE_n$ holds whp.
When $\cE_n$ holds we have $\susqd(G_n)=\susq(G_n)$,
so $\E (\susq(G_n);\cE_n) \le \E\susqd(G_n)$,
and \eqref{qdb} gives $\limsup \E(\susq(G_n);\cE_n)\le \susq(\kk)$.
By \refL{Ltestcond} this implies that $\susq(G_n)\pto\susq(\kk)$, which
is our goal.
It thus suffices to establish that \eqref{need} holds for some $\eps>0$.

Recall that 
$f_\eps(x)\le 1$ and $f_\eps\downto f_0=1-\rho_\ka$ as $\eps\to 0$.
Recall also that
$\tkqq$ is defined as the integral operator 
\[
g\mapsto
\int\kk(x,y)g(y)\dd\muqq(y)=\int\kk(x,y)f_\eps(y)g(y)\dd\mu(y)
\]
on $L^2(\muqq)$. The map
$g(x)\mapsto g(x)f_\eps(x)\qq$ is an isometry of
$L^2(\muqq)$ onto $L^2(\mu)$, and thus
$\tkqq$ is unitarily equivalent to the integral operator 
$T_\eps$ on $L^2(\mu)$
with kernel $f_\eps(x)^{1/2}\ka(x,y)f_\eps(y)^{1/2}$.
In particular,
$\norm{\tkqq}=\norm{T_\eps}$, and for
the special case $\eps=0$, when $\tkqq=\tkq$, 
$\norm{\tkq}=\norm{T_0}$.

Fix $\delta>0$. Since $T_\ka$ is compact, there is a finite rank operator $F$
with $\norm{\Delta}<\delta$, where $\Delta=T_\ka-F$.
Let $F_\eps$ and $\Delta_\eps$ denote the operators obtained by
multiplying the kernels of $F$ and $\Delta$ by $f_\eps(x)^{1/2}f_\eps(y)^{1/2}$.
Since $f_\eps\le 1$ holds pointwise, we have
\[
 \norm{\Delta_\eps}\le\norm{\Delta} < \delta.
\]
For any $g\in L^2$ the pointwise product $f_\eps g$ converges to $f_0g$ in $L^2$. Since
$F$ has finite rank, it follows that $\norm{F_\eps-F_0}\to 0$, and hence
that
\[
\limsup_{\eps\to 0} \norm{T_\eps-T_0}
 \le \limsup_{\eps\to 0} \norm{F_\eps-F_0} +\delta = \delta.
\]
Since $\delta>0$ was arbitrary, we have $\norm{T_\eps-T_0}\to 0$,
and in particular $\norm{\tkqq}=\norm{T_\eps}\to
\norm{T_0}=\norm{\tkq}<1$.
Hence, there exists $\eps>0$ such that $\norm{\tkqq}<1$.
But then \eqref{need} holds, because
$ \innprod{\tkqq^j1,1}_{\muqq}\le\norm{\tkqq}^j$.
\end{proof}

\begin{remark}
Chayes and Smith~\cite{ChayesSmith} have recently proved a result related to
\refT{Tbounded}(i) or \refT{Tiid}, for the special case where the type
space $\sss$ is finite. Their model has a fixed number of vertices of each type, which
makes essentially no difference in this finite-type case.
Chayes and Smith consider
(in effect) the number of ordered pairs $(v,w)$ of vertices with
$v$ of type $i$, $w$ of type $j$, and $v$ and $w$ in the same
component, normalized  by dividing by $n$,
showing convergence to the relevant branching process quantity.
These numbers sum to give the susceptibility, so such a result is more
refined than the corresponding result for the susceptibility itself.

In our setting, the analogue is to fix arbitrary measurable
subsets $S$ and $T$ of the type space, and consider $\sus_{S,T}(G_n)$,
which is $1/n$ times the number of pairs $(v,w)$ in the same component with the type
of $v$ lying in $S$ and that of $w$ in $T$. The corresponding
branching process quantity is just $\sus_{S,T}(\ka)$, i.e., the integral
over $x\in S$ of the expected number of particles in $\bpk(x)$
with types in $T$. In analogy with \refT{TBP1}, in the subcritical
case this quantity may be
expressed as $\sus_{S,T}(\kk)=\innprodmu{(I-\tk)\qw1_S,1_T}<\infty$.
It is not hard to see
that the proof of \refT{Tiid} in fact shows that
\begin{equation}\label{sST}
 \sus_{S,T}(G_n)\pto \sus_{S,T}(\ka),
\end{equation}
where $G_n=\gnkx$ is defined on an \iid{} vertex space.
The key point is that, in the light of \refT{Tlower} and its proof, it suffices
to prove a convergence result for the contribution to $\sus_{S,T}(G_n)$
from components of a fixed size $k$. For all the models we consider here,
this may be proved by adapting the methods used to prove
convergence of $N_k(G_n)/n$; we omit the details.
Once we have such convergence, we also obtain the analogue of \eqref{sST}
for $\susq$,
so all our results in this section may be extended in this way,
with the proviso that when considering $\gnkx$ with a general vertex
space $\vxs$ as in~\cite{kernels}, we must assume
that $S$ and $T$ are $\mu$-continuity sets.
\end{remark}

\begin{remark}
We believe that all the results in this section extend, with suitable
modifications, to the random graphs with clustering
introduced by~\citet{clustering}, and generalized (to a form analogous
to $G(A_n)$) in~\cite{cutsub}; these may be seen as the simple
graphs obtained from an appropriate random hypergraph by replacing
each hyperedge by a complete graph on its vertex set. Note that in this case
the appropriate limiting object is a hyperkernel (for the
defintions see~\cite{clustering}), and the corresponding branching process
is now a (multi-type, of course) compound Poisson one.

A key observation is that in such a graph, which is the union of certain
complete graphs, two vertices are in the same component if and only if they
are joined by a path which uses at most one edge from each of these
complete graphs. Roughly
speaking, this means that we need consider only the individual edge probabilities,
and not their correlations, and then arguments such as the proof of \refT{Tiid}
and (at least the first part of) \refT{Tbounded}
go through with little change. It also tells us that the susceptibility of a hyperkernel
is simply that of the corresponding edge kernel; this is no surprise, since
for the expected total size of the branching process all that matters
is (informally) the expected number of type $y$ children of each type $x$
individual, not the details of the distribution. This does not extend
to the modified susceptibility $\susq$, since this depends
on the (type-dependent) survival probability $\rho(\ka;x)$, which
certainly is sensitive to the details of the offspring distribution.

Adapting the proof of \refT{Tsup} needs more work, but we believe it should
be possible. Most of the time, one can work with bounded hyperkernels, where
not only are the individual (hyper)matrix entries uniformly bounded, but there
is a maximum edge cardinality. Taking the $r$-uniform case for simplicity,
one needs to show that the number of $(r-1)$-tuples of
vertices in the giant component in some subset of $\sss^{r-1}$ 
is typically close to what it should be,
since, in the proof of \refL{sc}, the sets $A_{y,i}$ should (presumably)
be replaced by corresponding subsets of $\sss^{r-1}$. 
For strong concentration, one argues as here but using
the appropriate stability result from~\cite{cutsub} in place
of \refT{stab}. Needless to say, since
we have not checked the details, there is always the possibility of unseen
complications!
\end{remark}

\section{Behaviour near the threshold}\label{Sthreshold}

In this section we consider the behaviour of $\sus$ and $\susq$ for a
family $\gl\kk$ of kernels, with $\kk$ fixed and $\gl$ ranging from 0
to $\infty$. Since $\norm{\tlk}=\gl\norm\tk$, 
then, as discussed in \cite{kernels},
$\gl\kk$ is
subcritical, critical and supercritical for $\gl<\glc$,
$\gl=\glc$
and $\gl>\glc$, respectively, where $\glc=\norm\tk\qw$.
Note that if $\norm\tk<\infty$, then $\glc>0$, while if
$\norm\tk=\infty$, then $\glc=0$, so $\gl\kk$ is supercritical for any
$\gl>0$. 

Note also that \refT{Teq} provides an alternative way of finding
$\glc$ (and thus $\norm\tk$): we can try to solve the integral
equation $f=1+\tlk f = 1+\gl\tk f$ and see whether
there exists any integrable positive solution.
This tells us whether $\sus(\gl\kk)$ is finite; since
(by Theorems~\refand{TBP1}{Tcritical}) the susceptibility
is finite in the subcritical case and infinite in the supercritical
case, this information determines $\glc$.
The advantage 
of this approach over attempting to solve \eqref{phik} itself
is that the equation is linear; this is one of the main motivations
for studying $\sus$. (Another is that it tends to evolve very simply 
in time in suitably parameterized models.)

In the subcritical case, 
$\gl<\glc$, we have the following  simple result.
(When we say that a function $f$ defined on the reals is
\emph{analytic}
at a point $x$, we mean that 
there is a neighbourhood of $x$ in which $f$
is given by the sum of a convergent power series;
equivalently, 
$f$ extends to a complex analytic
function in a complex neighbourhood of $x$.)

\begin{theorem}
  \label{TB}
Let $\ka$ be a kernel. Then
$\gl\mapsto\sus(\glk)=\susq(\glk)$ is an increasing, analytic
function on $(0,\glc)$, with a singularity at $\glc$.
Furthermore, 
$\sus(\glk)\upto\sus(\glck)=\susq(\glck)\le\infty$ as $\gl\upto\glc$,
and $\sus(\glk;x)\upto \sus(\glck;x)$ pointwise. 
\end{theorem}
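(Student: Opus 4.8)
The plan is to work from the series representation $\sus(\glk) = \musssqw\sum_j \innprodmu{(\gl\tk)^j 1,1} = \musssqw\sum_j \gl^j \innprodmu{\tk^j 1,1}$ supplied by \eqref{bp1b}. Set $c_j \= \innprodmu{\tk^j 1,1} \ge 0$; note each $c_j$ is finite (it is at most $\|\tk\|^j$ whenever $\|\tk\|<\infty$, and for $\gl<\glc$ the operator $\gl\tk$ is subcritical, hence $\|\gl\tk\|<1$). So $\sus(\glk) = \musssqw\sum_j c_j \gl^j$ is a power series in $\gl$ with non-negative coefficients and radius of convergence $R$. First I would identify $R$: by Theorems~\ref{TBP1} and~\ref{Tcritical}, $\sum_j c_j \gl^j < \infty$ for $\gl<\glc$ (subcritical case of \refT{TBP1}) while $\sum_j c_j \gl^j = \sus(\glk) = \infty$ for $\gl\ge\glc$ provided $\tk$ is compact or $\gl\kk$ is supercritical---and for $\gl>\glc$ supercriticality always gives $\infty$ by \refT{Tcritical}(ii). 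Hence the series diverges at every $\gl>\glc$, so $R\le\glc$; and it converges for $\gl<\glc$, so $R\ge\glc$. Therefore $R=\glc$, and on the open disc of radius $\glc$ the function $\gl\mapsto\musssqw\sum_j c_j\gl^j$ is complex analytic, in particular real analytic on $(0,\glc)$. Monotonicity is immediate since all $c_j\ge0$ and $c_0 = \musss>0$: for $0<\gl<\gl'<\glc$ we have $\sum_j c_j\gl^j \le \sum_j c_j(\gl')^j$, and in fact strict since $c_0>0$ forces $c_1=\innprodmu{\tk 1,1}=\intss\kk\ge0$; if $c_1=0$ then $\kk=0$ a.e.\ and $\sus\equiv\musssqw\musss$ is still (weakly) increasing, but generically strictly increasing---I would state monotonicity as ``increasing'' in the weak sense matching the statement. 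The equality $\sus(\glk)=\susq(\glk)$ on $(0,\glc]$ is just the observation recorded before \refT{TBP1}: $\gl\kk$ is subcritical or critical there, so $\rho(\gl\kk)=0$.

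Next, the singularity at $\glc$. Since the power series $\sum_j c_j\gl^j$ has non-negative coefficients and radius of convergence exactly $\glc$, Pringsheim's theorem tells us that $\gl=\glc$ is a singular point of the analytic continuation; equivalently the function cannot be analytic at $\glc$. Combined with convergence on $(0,\glc)$ this gives the claimed singularity. For the limiting behaviour: by monotone convergence in $\gl$ (all terms $c_j\gl^j$ increase as $\gl\upto\glc$), $\sus(\glk)\upto \musssqw\sum_j c_j\glc^j = \sus(\glck)$, the value being finite or $+\infty$ according to \refT{Tcritical}; and $\sus(\glck)=\susq(\glck)$ again because $\glc\kk$ is critical, so $\rho(\glck)=0$. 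This handles the displayed chain $\sus(\glk)\upto\sus(\glck)=\susq(\glck)\le\infty$.

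Finally, the pointwise statement $\sus(\glk;x)\upto\sus(\glck;x)$. By \eqref{bp1a}, $\sus(\glk;x) = \sum_j (\gl\tk)^j 1(x) = \sum_j \gl^j (\tk^j 1)(x)$, and each $(\tk^j 1)(x)\ge0$ under the standing assumption \eqref{51} (which makes $\tk^j 1$ well-defined and non-negative pointwise). So for each fixed $x$ the map $\gl\mapsto\sus(\glk;x)$ is again a power series with non-negative coefficients, hence increasing in $\gl$, and $\sus(\glk;x)\upto\sum_j \glc^j(\tk^j 1)(x)=\sus(\glck;x)$ as $\gl\upto\glc$ by monotone convergence on the terms. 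No integrability or analyticity is needed here, only monotone convergence of a sum of non-negative numbers.

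The main obstacle I anticipate is pinning down that the radius of convergence is \emph{exactly} $\glc$ rather than merely $\ge\glc$---i.e.\ ruling out convergence of $\sum_j c_j\gl^j$ at some $\gl>\glc$. This is where I must invoke \refT{Tcritical}(ii): for $\gl>\glc$ the kernel $\gl\kk$ is supercritical, so $\sus(\glk)=\infty$, and \emph{this} $\sus(\glk)$ is by \eqref{bp1b} precisely $\musssqw\sum_j c_j\gl^j$, forcing divergence. (One should be slightly careful that \eqref{bp1b} is an identity in $[0,\infty]$ valid for all integrable kernels, so it does apply to the supercritical $\gl\kk$; this is exactly what \refL{LBP1} gives.) Once $R=\glc$ is established, everything else---analyticity, Pringsheim's singularity, monotone limits---is standard. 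A secondary point to be careful about: the problem statement's ``increasing'' should be read as non-strict (or one adds the trivial hypothesis $\kk\not\equiv0$), since if $\kk=0$ a.e.\ the function is constant.
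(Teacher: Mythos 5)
Your proposal is correct and follows essentially the same route as the paper: both use the power-series representation \eqref{bp1b} in $\gl$ with non-negative coefficients $\innprodmu{\tk^j1,1}$, convergence for $\gl<\glc$ via \refT{TBP1}, divergence for $\gl>\glc$ via \refT{Tcritical}(ii) to pin the radius of convergence at $\glc$, the Pringsheim-type fact that a power series with non-negative coefficients is singular at the positive endpoint of its interval of convergence, and monotone convergence for the limits $\sus(\glk)\upto\sus(\glck)$ and $\sus(\glk;x)\upto\sus(\glck;x)$. Your extra remarks (that \eqref{bp1b} is an identity in $[0,\infty]$ applicable to supercritical $\gl\kk$, and that ``increasing'' is to be read weakly when $\kk=0$ a.e.) are accurate refinements of the same argument.
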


\begin{proof}
  By \eqref{bp1b},
\begin{equation}\label{tb}
\sus(\glk)= \musssqw\sumj\innprod{\tk^j1,1}\,\gl^j,
\end{equation}
which converges for $0<\gl<\glc$ by \refT{TBP1}. 
Hence, $\sus(\glk)$ is increasing and analytic on $(0,\glc)$.
Moreover, by \refT{Tcritical}(ii), the sum in \eqref{tb} diverges for
$\gl>\glc$; hence the radius of convergence of this power series is
$\glc$. Since the coefficients are non-negative, this implies that
$\sus(\glk)$ is not analytic at $\glc$.

Finally, $\sus(\glk)\upto\sus(\glck)$ as $\gl\upto\glc$ by
\eqref{tb} and monotone convergence.
Similarly, $\sus(\glk;x)\upto\sus(\glck;x)$ by \eqref{bp1a}
and monotone convergence.
\end{proof}

We shall see in \refSS{SSCHKNS} that it is possible to have 
$\sus(\glck)<\infty$. As we shall now show, 
if $\tk$ is compact, then $\sus(\glck)=\infty$, and the
critical exponent of $\sus$ is $-1$, as $\gl\upto\glc$.

\begin{theorem}\label{Tsub}
  Suppose that $\tk$ is compact (for example, that
  $\int\kk^2<\infty$).
Then for some constant $a$, $0<a\le1$, we have
\begin{equation*}
  \sus(\glk)=\susq(\glk)=\frac{a\glc}{\glc-\gl}+O(1),
\qquad 0<\gl<\glc, 
\end{equation*}
and $\sus(\glck)=\susq(\glck)=\infty$.

If, in addition, $\kk$ is irreducible, then 
$a=\bigpar{\ints\psi}^2/\ints\psi^2$, where $\psi$ is any
  non-negative eigenfunction of $\tk$.
\end{theorem}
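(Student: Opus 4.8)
The plan is to feed the power-series formula \eqref{bp1b} for $\sus(\glk)$ into the spectral theorem for the compact self-adjoint operator $\tk$. By \refR{Renorm} we may assume $\musss=1$, and we may assume $\kk\not\equiv0$ (otherwise the assertion is degenerate), so $0<\glc=\norm\tk\qw<\infty$ and $\sus(\glk)=\sumj\innprod{\tk^j1,1}\gl^j$. Let $(\mu_i)$ be the non-zero eigenvalues of $\tk$ with orthonormal eigenfunctions $(\phi_i)$, and put $c_i\=\innprod{1,\phi_i}^2$; then $|\mu_i|\le\norm\tk$, $\sum_ic_i\le\normll{1}^2=1$, and decomposing $1$ along the $\phi_i$ and $\ker\tk$ gives $\innprod{\tk^j1,1}=\sum_i\mu_i^jc_i$ for $j\ge1$, the $j=0$ term being $\innprod{1,1}=1$. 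For $0<\gl<\glc$ one has $\gl|\mu_i|\le\gl\norm\tk<1$, so $\sum_{i,j}c_i(\gl|\mu_i|)^j<\infty$ and Fubini yields
\begin{equation*}
 \sus(\glk)=1+\sum_ic_i\,\frac{\gl\mu_i}{1-\gl\mu_i},
\end{equation*}
all sums absolutely convergent.

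Next I would isolate the leading eigenvalue. Since $\tk$ is compact its non-zero eigenvalues accumulate only at $0$, so $I_+\=\set{i:\mu_i=\norm\tk=\glc\qw}$ is finite and there is a spectral gap below $\norm\tk$. Set $a\=\sum_{i\in I_+}c_i\in[0,1]$. For $i\in I_+$ the $i$-th term equals $c_i\gl/(\glc-\gl)$, and these sum to $a\gl/(\glc-\gl)=a\glc/(\glc-\gl)-a$. For $i\notin I_+$ the gap (together with $\gl<\glc$) keeps $1-\gl\mu_i$ bounded below by a positive constant independent of $\gl\in(0,\glc)$, whence $\sum_{i\notin I_+}c_i\gl\mu_i/(1-\gl\mu_i)=O(1)$ uniformly in $\gl$. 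Hence $\sus(\glk)=a\glc/(\glc-\gl)+O(1)$. Clearly $a\le\sum_ic_i\le1$; and $a>0$ because, by Lemma~5.15 in \cite{kernels} (whose proof uses only the compactness of $\tk$), $\tk$ has a non-negative eigenfunction $\psi_1$ with eigenvalue $\norm\tk$, and normalising so that $\normll{\psi_1}=1$ we may take $\psi_1=\phi_{i_0}$ for some $i_0\in I_+$, so that $a\ge c_{i_0}=\bigpar{\ints\psi_1\dd\mu}^2>0$. Finally, $\sus(\glck)=\susq(\glck)=\infty$: the first equality holds because $\glck$ is critical, hence $\rho(\glck)=0$; for the value, apply \refT{Tcritical}(i) to $\glck$ (critical, with $T_{\glck}=\glc\tk$ compact), or let $\gl\upto\glc$ above and invoke \refT{TB}.

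For the irreducible case the key extra input is the Perron--Frobenius property: when $\kk$ is irreducible the eigenvalue $\norm\tk$ of the compact operator $\tk$ is simple, with an a.e.\ positive eigenfunction $\psi_1$ (see \cite{kernels}). Then $I_+=\set{i_0}$, and normalising $\psi_1=\phi_{i_0}$ gives $a=c_{i_0}=\bigpar{\ints\psi_1\dd\mu}^2=\bigpar{\ints\psi_1\dd\mu}^2/\ints\psi_1^2\dd\mu$ since $\ints\psi_1^2\dd\mu=1$; this last expression is unchanged on replacing $\psi_1$ by any positive multiple. To conclude I would check that every non-negative eigenfunction $\psi$ of $\tk$ is such a multiple of $\psi_1$: if $\tk\psi=\mu\psi$ with $\psi\ge0$, $\psi\not\equiv0$, then self-adjointness gives $\mu\innprod{\psi,\psi_1}=\innprod{\tk\psi,\psi_1}=\innprod{\psi,\tk\psi_1}=\norm\tk\innprod{\psi,\psi_1}$, while $\innprod{\psi,\psi_1}=\ints\psi\psi_1\dd\mu>0$ because $\psi_1>0$ a.e.; hence $\mu=\norm\tk$ and $\psi$ lies in the one-dimensional top eigenspace $\bbR\psi_1$, as required.

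The only genuinely delicate step is the uniformity of the $O(1)$ remainder — that is, the spectral-gap estimate separating $I_+$ from the rest of the spectrum, which makes the error term bounded in $\gl$ rather than merely finite for each $\gl$ — together with the routine bookkeeping of the spectral decomposition (the kernel of $\tk$, and a possible eigenvalue $-\norm\tk$). The Perron--Frobenius facts used in the irreducible case are standard and can be quoted from \cite{kernels}.
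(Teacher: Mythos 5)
Your proposal is correct and takes essentially the same route as the paper: isolate the eigenspace of the compact self-adjoint operator $\tk$ at the top eigenvalue $\norm\tk$, use the spectral gap to make the remaining contribution $O(1)$ uniformly in $\gl$, and invoke the positivity/simplicity facts from \cite[Lemma 5.15]{kernels}; your termwise eigenfunction expansion of the series \eqref{bp1b} is just the diagonalized form of the paper's resolvent argument via \refT{TBP1}. A minor bonus is that you actually prove that every non-negative eigenfunction is a multiple of the Perron eigenfunction in the irreducible case, a point the paper only asserts.
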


\begin{proof}
  Since a compact operator is bounded, $\glc>0$. 
We may assume that $\musss=1$ by \refR{Renorm}. Furthermore, 
we may replace $\kk$
  by $\glc\kk$ and may thus assume, for convenience, that $\norm\tk=1$
  and $\glc=1$.

Let $E_1$ be the eigenspace \set{f\in L^2(\mu):\tk f=f} of $\tk$, and
$P_1$ the orthogonal projection onto $E_1$.
Since $\tk$ is compact and self-adjoint, $E_1$ and its orthogonal
complement are invariant, 1 does not belong to the spectrum of $\tk$
restricted to $E_1^\perp$, and, for $\gl<1$,
$\norm{(I-\gl\tk)\qw(I-P_1)}=O(1)$, while 
$(I-\gl\tk)\qw P_1=(1-\gl)\qw P_1$.
Consequently, by \refT{TBP1},
\begin{equation*}
  \sus(\glk)=(1-\gl)\qw\innprod{P_11,1}+O(1).
\end{equation*}
Let $a\=\innprod{P_11,1}=\normll{P_11}^2\ge0$; then $a\le\normll{1}^2=1$,
so $0\le a \le 1$.
If $a=0$, then $P_11=0$, so the constant function $1$ is orthogonal to
$E_1$. But this contradicts the fact that $E_1$ always contains a
non-zero eigenfunction $\psi\ge0$, see the proof of \refT{Tcritical}
and \cite[Lemma 5.15]{kernels}.
Hence, $a>0$.

The fact that $\sus(\glck)=\infty$ now follows from \refT{TB}.

Furthermore, if $\kk$ is irreducible, then $E_1$ is one-dimensional,
see again \cite[Lemma 5.15 and its proof]{kernels},
so $P_1f=\normll{\psi}^{-2}\innprod{f,\psi}\psi$, and the formula for
$a$ follows, noting that every non-negative eigenfunction is a
multiple of this $\psi$.
\end{proof}

In the supercritical case, only $\susq$ is of interest. 
If we allow reducible $\kk$, we can have several singularities, coming
from different parts of the type space, see \refE{E2}. We therefore
assume that $\kk$ is irreducible. Even in that case, it is possible
that the dual kernel $\kkq$ is critical, 
see \cite[Example 12.4]{kernels}; in this example
it is not hard to check that  $\susq(\kk)$ is infinite.

We conjecture that when $\kk$ is irreducible,
$\susq(\glk)$ is analytic for all $\gl\neq\glc$ 
under very weak conditions, but we have only been able to show this
under the rather stringent condition \eqref{t5a} below.
(See also the examples in \refS{Sex}.)
Under this condition, we can also show that the
behaviour of $\susq$ is symmetric at $\glc$ to the first order: 
the asymptotic behaviour is the same at the subcritical and
supercritical sides. As seen in Examples \refand{Em}{Er}, this does not 
hold for all $\kk$, even if we assume the \HS{} condition
$\int\kk^2<\infty$. (Furthermore, we shall see in Sections
\refand{SSER}{SSrank1} that 
the second order terms generally differ between the two sides.)

\begin{theorem}\label{Tanalytic}
Suppose that $\kk$ is irreducible, and that
  \begin{equation}\label{t5a}
\sup_x \int_\sss \kk(x,y)^2\dd\mu(y) <\infty.
  \end{equation}
\begin{romenumerate}
\item The function $\gl\mapsto\susq(\glk)$ is analytic
except at
  $\glc\=\norm{\tk}\qw$.
\item As $\gl\to\glc$,
\begin{equation*}
  \susq(\glk)=\frac{b\glc}{|\gl-\glc|}+O(1),
\end{equation*}
with 
$b=\bigpar{\ints\psi}^2/\ints\psi^2>0$, 
where $\psi$ is any
non-negative eigenfunction of $\tk$.
\end{romenumerate}
\end{theorem}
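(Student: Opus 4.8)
The plan is to reduce to the case $\musss=1$ (\refR{Renorm}) and to dispose of the subcritical range $0<\gl<\glc$ at once: there $\susq(\glk)=\sus(\glk)$ is increasing and analytic by \refT{TB}, and \refT{Tsub}---applicable since \eqref{t5a} forces $\intss\kk^2\le\musss\,\sup_x\int_\sss\kk(x,y)^2\dd\mu(y)<\infty$, so that $\tk$ is \HS{} and hence compact---gives $\susq(\glk)=a\glc/(\glc-\gl)+O(1)$ with $a=\bigpar{\ints\psi}^2/\ints\psi^2=b$. So everything reduces to $\gl>\glc$. Write $\rho_\gl\=\rho(\glk;\cdot)$, $g_\gl\=(1-\rho_\gl)\qq$, and let $\widehat\mu_\gl$ be the measure with $\dd\widehat\mu_\gl=(1-\rho_\gl)\dd\mu$. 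The role of \eqref{t5a} is that $\normoo{\tk h}\le C\qq\normll h$ with $C\=\sup_x\int_\sss\kk(x,y)^2\dd\mu(y)<\infty$, so $\tk\colon L^2(\mu)\to L^\infty(\mu)$ is bounded; hence $\tk\rho_\gl$ and every eigenfunction of $\tk$ are bounded, and $1-\rho_\gl=e^{-\gl\tk\rho_\gl}$ is bounded below by a positive constant on compact $\gl$-intervals. For $\gl>\glc$ the kernel $\glk$ is supercritical and \qir{} and $\tk$ is compact, so $\norm{\widehat{T_{\glk}}}<1$ by \cite[Theorem~6.7]{kernels}, whence $\susq(\glk)=\innprod{(I-\widehat{T_{\glk}})\qw1,1}_{\widehat\mu_\gl}$ by \refT{TBP1}\ref{TBP1b}. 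The isometry $h\mapsto h(1-\rho_\gl)\qq$ of $L^2(\widehat\mu_\gl)$ onto $L^2(\mu)$ used in the proof of \refT{Tsup} carries $\widehat{T_{\glk}}$ to the self-adjoint operator $S_\gl$ on $L^2(\mu)$ with kernel $\gl(1-\rho_\gl(x))\qq\kk(x,y)(1-\rho_\gl(y))\qq$, giving
\begin{equation*}
 \susq(\glk)=\innprodmu{(I-S_\gl)\qw g_\gl,g_\gl},\qquad\norm{S_\gl}=\norm{\widehat{T_{\glk}}}<1.
\end{equation*}

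For part (i) on $(\glc,\infty)$, I would apply the analytic implicit function theorem to $F(\gl,f)\=f-1+e^{-\gl\tk f}$ on $(\glc,\infty)\times L^2(\mu)$, which is well defined and analytic because $\tk\colon L^2\to L^\infty$ is bounded. Its $f$-derivative at $(\gl,\rho_\gl)$ is $h\mapsto h-\gl(1-\rho_\gl)\tk h$, which is invertible since its transpose is conjugate (via multiplication by $g_\gl$) to $I-S_\gl$ and $\norm{S_\gl}<1$. Hence $\gl\mapsto\rho_\gl$ is real-analytic into $L^2(\mu)$, and then into $L^\infty(\mu)$, on $(\glc,\infty)$; consequently $\gl\mapsto g_\gl$ is analytic into $L^\infty$, $\gl\mapsto S_\gl$ is analytic into the \HS{} operators (its kernel is the fixed $L^2(\sss^2)$ function $\kk$ times an analytic $L^\infty(\sss^2)$ factor), $(I-S_\gl)\qw=\sumj S_\gl^j$ is analytic in operator norm, and pairing with the analytic vector $g_\gl$ shows $\susq(\glk)$ is analytic there. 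With \refT{TB} this gives (i).

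For (ii), as $\gl\downto\glc$ we have $\normll{\rho_\gl}\to0$ (monotonicity of $\rho$ in $\gl$, dominated convergence, and $\rho_{\glc}=0$), hence $\normoo{\tk\rho_\gl}\le C\qq\normll{\rho_\gl}\to0$, so $\rho_\gl\to0$ and $g_\gl\to1$ in $L^\infty$, and $S_\gl\to\glc\tk$ in operator norm. Since $\kk$ is \qir, $\glc\tk$ has a simple, isolated top eigenvalue $1$ with a positive $L^2(\mu)$-normalised eigenfunction $\psi_1$ (a positive multiple of $\psi$, with $\innprodmu{1,\psi_1}^2=\bigpar{\ints\psi}^2/\ints\psi^2=b>0$); by standard perturbation theory, for $\gl$ near $\glc$ the (also \qir) operator $S_\gl$ has a simple top eigenvalue $\beta_\gl\to1$ with positive eigenfunction $\phi_\gl\to\psi_1$ in $L^2$, well separated from the rest of its spectrum, and $\gl\mapsto\beta_\gl,\phi_\gl$ are smooth at $\glc^+$ once we know (below) that $\gl\mapsto\rho_\gl$ is. Splitting off the top eigenprojection gives $(I-S_\gl)\qw=(1-\beta_\gl)\qw\innprodmu{\cdot,\phi_\gl}\phi_\gl+R_\gl$ with $R_\gl$ uniformly bounded in operator norm near $\glc$, so that $\susq(\glk)=\innprodmu{g_\gl,\phi_\gl}^2/(1-\beta_\gl)+O(1)$ with $\innprodmu{g_\gl,\phi_\gl}^2=b+O(\gl-\glc)$. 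First-order perturbation gives $\beta_\gl'(\glc)=\innprodmu{(\partial_\gl S_\gl)|_{\glc}\psi_1,\psi_1}$, and since $\partial_\gl S_\gl|_{\glc}$ has kernel $\bigl[1-\tfrac{\glc}{2}(\dot\rho(x)+\dot\rho(y))\bigr]\kk(x,y)$, where $\dot\rho\=\partial_\gl\rho_\gl|_{\glc^+}$, a short computation using $\tk\psi_1=\glc\qw\psi_1$ reduces this to $\beta_\gl'(\glc)=\glc\qw-\ints\psi_1^2\,\dot\rho\,\dd\mu$. Hence (ii) follows once we show $\ints\psi_1^2\,\dot\rho\,\dd\mu=2/\glc$: then $\beta_\gl'(\glc)=-\glc\qw$, so $1-\beta_\gl=\glc\qw(\gl-\glc)+O((\gl-\glc)^2)$ and $\susq(\glk)=b\glc/(\gl-\glc)+O(1)$, which together with the subcritical side ($a=b$) is the claimed $\susq(\glk)=b\glc/|\gl-\glc|+O(1)$.

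The remaining---and main---step is to identify $\dot\rho$ by a Lyapunov--Schmidt analysis of $\rho=1-e^{-\gl\tk\rho}$ near the degenerate point $\glc$. Write $\rho=t\psi_1+v$ with $t\in\bbR$ and $v\perp\psi_1$ in $L^2(\mu)$. The $\psi_1^\perp$-component of the equation has an invertible linearisation in $v$ at $(\glc,0,0)$ (since $1$ lies outside the spectrum of $\glc\tk$ restricted to $\psi_1^\perp$), so the analytic implicit function theorem yields $v=v(\gl,t)$, analytic near $(\glc,0)$, with $v(\gl,0)\equiv0$ and $\partial_tv(\gl,0)\equiv0$, hence $v=O(t^2)$; condition \eqref{t5a} is exactly what makes the nonlinear map act smoothly between the spaces in play ($\tk v,(\tk\rho)^2\in L^\infty$, $\psi_1^3\in L^1$). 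Projecting the remaining equation onto $\psi_1$, using $\innprodmu{v,\psi_1}=0$ and $1-e^{-\gl\tk\rho}=\gl\tk\rho-\tfrac12\gl^2(\tk\rho)^2+O(t^3)$, and dividing by $t$, the scalar equation becomes
\begin{equation*}
 0=\Bigl(1-\frac{\gl}{\glc}\Bigr)+\frac{\gl^2}{2\glc^2}\Bigl(\ints\psi_1^3\,\dd\mu\Bigr)t+O(t^2),
\end{equation*}
whose $\partial_t$-derivative at $(\glc,0)$ is $\tfrac12\ints\psi_1^3\,\dd\mu$, a finite positive number (finite because $\psi_1\in L^\infty$, positive because $\psi_1>0$ a.e.). The implicit function theorem then gives an analytic branch $t=t(\gl)$ with $t(\glc)=0$ and $t'(\glc)=\frac{2}{\glc\ints\psi_1^3\,\dd\mu}>0$, which is positive for $\gl>\glc$ near $\glc$ and hence, being nonnegative and nontrivial, equals the maximal solution $\rho(\glk)$. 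Thus $\rho_\gl=t(\gl)\psi_1+O((\gl-\glc)^2)$, so $\dot\rho=\frac{2}{\glc\ints\psi_1^3\,\dd\mu}\psi_1$ and $\ints\psi_1^2\,\dot\rho\,\dd\mu=2/\glc$ as needed; this also supplies the one-sided smoothness of $\gl\mapsto\rho_\gl$ at $\glc^+$ used above. The main obstacle is precisely this last paragraph: arranging the function spaces so that the nonlinearity is genuinely smooth (this is where \eqref{t5a} is essential) and correctly extracting the coefficient of the bifurcating branch; the rest is the machinery of \cite{kernels} together with routine perturbation theory for isolated simple eigenvalues.
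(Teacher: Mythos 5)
Your argument is correct in substance and, for the supercritical estimate that is the heart of the theorem, has the same architecture as the paper's proof: dispose of $\gl<\glc$ by \refT{Tsub}; conjugate the dual operator into $L^2(\mu)$ by multiplication by $(1-\rholk)\qq$ (your $S_\gl$ is exactly the paper's $\ttl$, and your identity $\susq(\glk)=\innprodmu{(I-S_\gl)\qw g_\gl,g_\gl}$ is \eqref{l4}); then perturb about the simple isolated eigenvalue $1$ of $\glc\tk$, the whole of (ii) reducing to the first-order behaviour of $\rho_{\glk}$ at $\glc$. The genuine difference is where that input comes from. The paper imports two facts from \cite[Section 15]{kernels}: the analytic extension of $\gl\mapsto\rho_{\glk}$ on $(\glc,\infty)$ (used for (i)) and the expansion $\rholk=a_\eps\psi+O(\eps^2)$ with $a_\eps=2\eps/\ints\psi^3+O(\eps^2)$ (used for (ii)). You re-derive both: analyticity of $\gl\mapsto\rho_\gl$ via the analytic implicit function theorem, using that the linearisation $I-\gl(1-\rho_\gl)\tk$ is conjugate (directly -- no transpose is needed) to $I-S_\gl$ with $\norm{S_\gl}<1$; and the expansion via a Lyapunov--Schmidt reduction, whose branch slope $t'(\glc)=2/(\glc\ints\psi_1^3)$ and eigenvalue derivative $\beta_\gl'(\glc)=\glc\qw-\ints\psi_1^2\dot\rho=-\glc\qw$ reproduce exactly the paper's $a_\eps$ and its computation $\innprod{\ttl\psi,\psi}=1-\eps+O(\eps^2)$ (there normalised to $\glc=1$). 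Your route is more self-contained but redoes work the paper takes off the shelf; both uses of \eqref{t5a} (to make $\tk\colon L^2\to L^\infty$ bounded, hence $\rho_\gl$, $\psi$ bounded and the nonlinearity analytic) coincide.

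One step should not be left as stated: identifying the bifurcating branch with $\rho_{\glk}$ on the grounds that it is ``nonnegative and nontrivial''. Smallness of $v(\gl,t)$ in $L^2(\mu)$ does not make $t\psi_1+v$ pointwise nonnegative, since $\psi_1$ need not be bounded away from $0$, so nonnegativity of the branch is not immediate. The repair is already in your hands: you prove $\normll{\rho_\gl}\to0$ as $\gl\downto\glc$, and the Lyapunov--Schmidt reduction characterises \emph{all} $L^2$-small solutions of $f=1-e^{-\gl\tk f}$ near $(\glc,0)$ as $f=0$ or $f=t(\gl)\psi_1+v(\gl,t(\gl))$; since $\rho_{\glk}$ is a small nonzero solution for $\gl$ slightly above $\glc$ (nonzero by supercriticality and irreducibility), it must equal the branch, which also gives the one-sided smoothness you use. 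The analogous identification in part (i) -- that the implicit-function-theorem branch through $(\glo,\rho_{\glo})$ coincides with $\rho_{\glk}$ for nearby $\gl$ -- also deserves a sentence: it follows from local uniqueness together with continuity of $\gl\mapsto\rho_{\glk}$ in $L^2(\mu)$, which one gets from monotonicity in $\gl$, passing to the limit in \eqref{phik}, and the uniqueness of nonzero nonnegative solutions for irreducible kernels. With these two identifications spelled out, your proof is complete.
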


\begin{proof}
The subcritical case $\gl<\glc$ follows from \refT{Tsub}, so we assume
$\gl>\glc$. (Note that \eqref{t5a} implies that $\tk$ is \HS{} and
thus compact.)
We may further assume that $\musss=1$.

\pfitem{i} 
Let $\glo>\glc$.
By \cite[Section 15]{kernels}, there exists 
an analytic function $z\mapsto\extrho_z$ defined in a complex
neighbourhood $U$
of $\glo$ and with values in the Banach space $L^2(\mu)$ such that
$\extrho_z=\rho_{z\kk}$ when $z$ is real, and \eqref{phik} extends to 
\begin{equation}\label{phiz}
  \extrho_z = 1-e^{-z\tk \extrho_z}.
\end{equation}
We may further (by shrinking $U$) assume that $\normll{\extrhoz}$ is
bounded in $U$. Then, by \eqref{t5a}
and Cauchy--Schwartz, 
$\normoo{\tk(\extrhoz)}=O(1)$ 
in $U$, and thus, by \eqref{phiz}, $|1-\extrhoz|$ is bounded above and
below, uniformly for  $z\in U$.
In particular, for every $\glk$ with real $\gl\in U$,
$L^2(\muq)=L^2(\mu)$, with uniformly equivalent norms. We can
therefore regard $\tlkq$ as an operator in $L^2(\mu)$.

We define, for $z\in U$, $\tqz f\=z\tk((1-\extrhoz)f)$; thus
$\tql=\tlkq$ for real $\gl\in U$ by \eqref{tkq}. Note that
$z\mapsto\tqz$ is an analytic map of $U$ into the Banach space of bounded
operators on $L^2(\mu)$.

By \refT{TBP1}, $I-\Txq{\glo\kk}$ is invertible. By continuity, 
we may assume that $I-\tqz$ is invertible in $U$. 
Then $f(z)\=\innprodmu{(I-\tqz)\qw1,1-\extrhoz}$ is an analytic
function in $U$, and $f(\gl)=\susq(\glk)$ for real $\gl\in U$ by
\refT{TBP1}(ii). Hence $\susq(\glk)$ is analytic at $\glo$.

\pfitem{ii}
We use a result from perturbation theory, for convenience stated as
\refL{Lpert} below in a form adapted to our purposes; 
see \cite[Section VII.6]{Dunford-Schwartz}
or \cite{Kato} for similar arguments and many related results.

We may rescale and assume that $\glc=\norm{\tk}=1$, \ie, $\kk$ is critical.

It will be convenient to use the fixed Hilbert space $L^2(\mu)$ rather
than $L^2(\muq)$; recall that $\muq$ depends on $\gl$.
Define a self-adjoint operator $\ttl$ in $L^2(\mu)$ by
\begin{equation}\label{l4a}
  \ttl f \= (1-\rholk)\qq\gl\tk(f(1-\rholk)\qq),
\end{equation}
and note that if $\ul$ is the unitary mapping $f\mapsto\uqq f$ of
$L^2(\muq)$ onto $L^2(\mu)$, then $\ttl=\ul\tlkq\ul\qw$ by
\eqref{tkq}. Hence, $\ttl$ in $L^2(\mu)$ is unitarily equivalent to
$\tlkq$ in $L^2(\muq)$. Further, by \refT{TBP1}(ii),
\begin{equation}\label{l4}
  \susq(\glk)
=
\innprodmuq{(I-\tlkq)\qw1,1}
=
\innprodmu{(I-\ttl)\qw\ul1,\ul1}.
\end{equation}

Note that $\rho_\kk=0$, and thus $\tti=\tk$, which has a 
simple eigenvalue 1, with a positive eigenfunction $\psi$
\cite[Lemma 5.15]{kernels}, and all other eigenvalues strictly
smaller. We may assume that $\normll\psi=1$.

We apply \refL{Lpert} with $T=\tti$ and $T'=\ttl$, with $\gl=1+\eps$
for small $\eps>0$.
By \cite[Section 15]{kernels}, $\normoo{\rholk}=O(\eps)$, and more
precisely, $\rholk=a_\eps \psi+\rho_\eps^*$ with
$\normll{\rho_\eps^*}=O(\eps^2)$ and 
\begin{equation}\label{l5}
  a_\eps=\frac2{\ints\psi^3\dd\mu}\eps+O(\eps^2).
\end{equation}
It follows (recalling that $\psi$ is bounded because
$\psi=\tk\psi$ and \eqref{t5a}) that
$\uqq\psi=\psi-\tfrac12a_\eps\psi^2+\reps$, with $\normll{\reps}=O(\eps^2)$.
Consequently, 
\eqref{l4a} implies that
$\norm{\ttl-\tti}=O(\eps)$ and,
using 
$\innprod{\tk\psi^2,\psi}=\innprod{\psi^2,\tk\psi}=\innprod{\psi^2,\psi}
=\ints\psi^3\dd\mu$ and \eqref{l5},
\begin{equation*}
  \begin{split}
  \innprod{\ttl\psi,\psi}
&=
\gl\biginnprod{\tk\bigpar{\uqq\psi},\uqq\psi}
\\&
=\gl\bigpar{\innprod{\tk\psi,\psi}-\tfrac12a_\eps\innprod{\tk\psi,\psi^2}
 -\tfrac12a_\eps\innprod{\tk\psi^2,\psi}
 +O(\eps^2)}
\\&
=(1+\eps)(1-2\eps+O(\eps^2))
\\&
=1-\eps+O(\eps^2).	
  \end{split}
\end{equation*}
Further, $\ul1=\uqq=1+O(\eps)$. Hence, \eqref{l4} and \eqref{lpert}
yield
\begin{equation*}
  \susq((1+\eps)\kk)=\frac{\innprod{1,\psi}^2+O(\eps)}{\eps+O(\eps^2)}+O(1)
=
\frac{\innprod{1,\psi}^2}{\eps}+O(1),
\end{equation*}
which is the desired result.
\end{proof}

\begin{lemma}\label{Lpert}
  Let $T$ be a compact self-adjoint operator in a Hilbert space $H$, such
  that $T$ has a largest eigenvalue $1$ that is simple, with a
  corresponding normalized eigenvector $\psi$.
Then there exists $\eta>0$ such that if $T'$ is any self-adjoint operator
  with $\norm {T'-T}<\eta$ such that $I-T'$ is  invertible, then
  \begin{equation}\label{lpert}
\innprod{(I-T')\qw f,g} 
= \frac{\innprod{f,\psi}\innprod{\psi,g}+\ot}
{1-\innprod{T'\psi,\psi}+\ott}
+O(1).
  \end{equation}
for any $f,g\in H$ with $\norm f,\norm g\le1$.
\end{lemma}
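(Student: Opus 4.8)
The plan is to reduce the estimate to a $2\times 2$ block computation together with a scalar Schur complement. Let $P$ be the orthogonal projection of $H$ onto the one-dimensional eigenspace $\bbC\psi$ and $Q=I-P$. Since $T$ is compact and self-adjoint and $1$ is a \emph{simple} eigenvalue of $T$, the subspace $\psi^\perp=QH$ is $T$-invariant and $1\notin\sigma\bigpar{T|_{\psi^\perp}}$; hence $Q(I-T)Q$ is boundedly invertible on $\psi^\perp$, say with inverse of norm $C$, and one fixes $\eta\le 1/(2C)$. Put $E\=T'-T$, so $E$ is self-adjoint with $\norm E<\eta$, and decompose
\[
I-T'=\smatrixx{a&b\\c&d}
\]
relative to $H=\bbC\psi\oplus\psi^\perp$, identifying $\bbC\psi$ with $\bbC$ via $\psi$. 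Because $\psi$ is an eigenvector of $T$ we have $PTQ=QTP=0$, so $b=-PEQ$, $c=-QEP$, hence $\norm b,\norm c\le\norm E$; also $a=1-\innprod{T'\psi,\psi}$ \emph{exactly}, and $d=Q(I-T)Q-QEQ$, so for $\norm E<\eta$ the block $d$ is invertible with $\norm{d\qw}\le 2C$. All of these bounds are uniform in $T'$.

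Next one invokes the block-inverse (Schur complement) formula: with the scalar $s\=a-bd\qw c$, the operator $I-T'$ is invertible if and only if $s\neq 0$ (which holds here, by hypothesis), and the $PP$, $PQ$, $QP$, $QQ$ blocks of $(I-T')\qw$ are then $s\qw$, $-s\qw bd\qw$, $-d\qw cs\qw$, and $d\qw+d\qw cs\qw bd\qw$. Since $\norm b\,\norm{d\qw}\,\norm c\le 2C\norm E^2$, we obtain $s=1-\innprod{T'\psi,\psi}+\ott$, which is precisely the denominator appearing in \eqref{lpert}.

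It then remains to expand $\innprod{(I-T')\qw f,g}$. Write $f=\alpha\psi+f_\perp$ and $g=\beta\psi+g_\perp$ with $f_\perp,g_\perp\in\psi^\perp$, so that $\alpha=\innprod{f,\psi}$, $\beta=\innprod{\psi,g}$ in the real case and $|\alpha|,|\beta|,\norm{f_\perp},\norm{g_\perp}\le 1$. Using the block inverse and the orthogonality of the $P$- and $Q$-components,
\[
\innprod{(I-T')\qw f,g}
=\frac{(\alpha-bd\qw f_\perp)\beta}{s}
-\frac{\alpha-bd\qw f_\perp}{s}\,\innprod{d\qw c\psi,g_\perp}
+\innprod{d\qw f_\perp,g_\perp}.
\]
Here the last term is $O(1)$ since $\norm{d\qw}\le 2C$; the middle term has modulus at most $(1+2C\eta)\cdot 2C\norm E/|s|$, hence equals $h/s$ for some $h=\ot$; and $|bd\qw f_\perp|\le 2C\norm E$, so $(\alpha-bd\qw f_\perp)\beta=\innprod{f,\psi}\innprod{\psi,g}+\ot$. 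Combining the first two terms and substituting the formula for $s$ yields exactly \eqref{lpert}. (For a complex $H$ one simply carries the conjugates in the obvious places; the only application here is to real $L^2(\mu)$.)

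The delicate point is not any calculation but uniformity: all the implied constants — above all the bound $\norm{d\qw}\le 2C$ and the invertibility of $d$ — must hold simultaneously for every $T'$ with $\norm{T'-T}<\eta$, which is exactly why $\eta$ is extracted from the \emph{fixed} spectral gap of $T$ and never adjusted with $T'$. The two small but essential observations are that $a=1-\innprod{T'\psi,\psi}$ holds on the nose, so that the only correction to $s$ is the genuinely second-order term $-bd\qw c$, and that a quantity of size $\ot/|s|$ may be rewritten as $(\ot)/s$ since $s$ is a scalar; this last point is what lets the off-diagonal contributions be absorbed into the numerator and gives \eqref{lpert} its clean rational form.
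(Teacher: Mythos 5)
Your proof is correct, and it takes a genuinely different route from the paper. The paper argues via Riesz spectral projections: it integrates the resolvent around a small circle about the eigenvalue $1$, shows the perturbed projection $P_A$ for $T'=T+A$ is close to $P_0$ and hence still has rank one (quoting Dunford--Schwartz), writes $(I-T')\qw=(1-\gla)\qw P_A+R_A$ with $\norm{R_A}=O(1)$, and computes the perturbed eigenvalue $\gla=1+\innprod{A\psi,\psi}+\oaa$ from the Rayleigh-quotient identity with $\psia=P_A\psi$. You instead decompose $H=\bbC\psi\oplus\psi^\perp$ and invert $I-T'$ by the Schur complement: the off-diagonal blocks are $-PEQ$, $-QEP$ and hence $\ot$, the corner is exactly $1-\innprod{T'\psi,\psi}$, and the $QQ$-block is uniformly invertible because the spectral gap of the compact self-adjoint $T$ at the simple eigenvalue $1$ survives a perturbation of norm $<\eta=1/(2C)$. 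Your bookkeeping is sound: the Schur scalar $s$ differs from $1-\innprod{T'\psi,\psi}$ only by $-bd\qw c=\ott$, the cross terms are absorbed into the numerator as $\ot$ over the same $s$, the pure $\psi^\perp$ term is $O(1)$, and all constants depend only on $T$, with $s\neq0$ supplied by the invertibility hypothesis. What each approach buys: the paper's contour-integral argument exhibits the perturbed eigenvalue and rank-one spectral projection explicitly (so the denominator is recognizably $1-\gla$), at the cost of invoking standard but heavier machinery; your block (Feshbach--Schur) computation is more elementary and self-contained, makes the exactness of the diagonal entry $1-\innprod{T'\psi,\psi}$ and the second-order nature of the correction completely transparent, and delivers the required uniformity in $T'$, $f$, $g$ directly from the fixed gap of $T$.
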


\begin{proof}
The spectrum $\sigma(T)\subset(-\infty,1-\gd]\cup\set1$ for some
$\gd>0$.
Let $\gam$ be the circle \set{z:|z-1|=\gd/2}.
Then, as is well known,  the spectral projection
\begin{equation}\label{p0}
  P_0\=\frac1{2\pi\ii}\oint_\gam (zI-T)\qw\dd z
\end{equation}
is the orthogonal projection onto the one-dimensional eigenspace
spanned by $\psi$. 
Let $A=T'-T$.
If $A$ is any self-adjoint operator with 
$\norm A\le \eta$, for some sufficiently small $\eta>0$, then
$zI-T-A$ is invertible for $z\in\gam$, and we define
\begin{equation}\label{pa}
  P_A\=\frac1{2\pi\ii}\oint_\gam (zI-T-A)\qw\dd z.
\end{equation}
Thus $P_A$ is the spectral projection for $T+A$ associated to the
interior of $\gam$. It follows from \eqref{p0} and \eqref{pa} that
$\norm{P_A-P_0}=O(\norm A)$, so if $\eta$ is small enough,
$\norm{P_A-P_0}<1$, and it follows \cite[Lemma VII.6.7]{Dunford-Schwartz}
that $P_A$ too has rank 1; this must be the orthogonal projection onto
a one-dimensional space spanned by an eigenfunction $\psia$ of
$T+A$ with eigenvalue $\gla$, with $|\gla-1|<\gd/2$. Moreover,
if $\gla\neq1$, then
since all other eigenvalues of $T+A$ then lie outside $\gam$,
\begin{equation}
  \label{l1}
(I-(T+A))\qw=(1-\gla)\qw P_A + R_A, 
\end{equation}
with $\norm{R_A}\le 2/\gd=O(1)$.

Since $\norm{P_A \psi-\psi}=\norm{(P_A-P_0)\psi}=O(\norm A)$,
$P_A\psi\neq0$ (provided $\eta$ is small enough), and thus we can
take $\psia=P_A\psi$. 
Hence $\norm{\psia-\psi}=\norm{P_A\psi-\psi}=O(\norm A)$ and
\begin{align*}
  \innprod{\psia,\psi} &= \innprod{\psi,\psi}+\oa = 1+\oa,
\\
  \innprod{T \psia,\psi} &= \innprod{\psia,T\psi}
= \innprod{\psia,\psi} = 1+\oa,
\\
  \innprod{A \psia,\psi} &= \innprod{A\psi,\psi}+\oaa,
\end{align*}
and thus
\begin{equation}\label{l2}
  \gla= \frac{\innprod{(T+A) \psia,\psi}}{\innprod{\psia,\psi}}
=1+\frac{\innprod{A \psia,\psi}}{\innprod{\psia,\psi}}
=1+\innprod{A\psi,\psi}+\oaa.
\end{equation}
The result follows from \eqref{l1} and \eqref{l2}, using $P_0
f=\innprod{f,\psi}\psi$. 
\end{proof}

\section{Examples}\label{Sex}

In this section we give several examples illustrating the results
above and their limits.
We sometimes drop $\kk$ from the notation; we let $\rhok$ denote the
function $\rhok(x)=\rhok(\kk;x)$. (But we continue to denote the
number $\ints\rhok\dd\mu$ by $\rhokkk$, in order to distinguish it
from the function $\rhok$.)

Note first that the probabilities $\rhok(x)$ can in principle be
calculated by recursion and integration. The number of children of an
individual of type $x$ in the branching process is Poisson with mean
$\int\kk(x,y)\dd\mu(y)=\tk1(x)$, and thus (in somewhat informal language)
\begin{equation}\label{rho1}
  \rho_1(x)=\P(\text{$x$ has no child})
=e^{-\tk1(x)}.
\end{equation}

Next, $|\bpk(x)|=2$ if and only if $x$ has a single child, which is
childless. Hence, by conditioning on the offspring of $x$,
\begin{equation}\label{rho2}
  \begin{split}
  \rho_2(x)
&=e^{-\tk1(x)}\ints\kk(x,y)\P(|\bpk(y)|=1)\dd\mu(y)
=e^{-\tk1(x)}\tk(\rho_1)(x)
\\&
=\rho_1(x)\tk(\rho_1)(x).	
  \end{split}
\end{equation}

Similarly, considering the two ways to get $|\bpk(x)|=3$, 
\begin{equation}\label{rho3}
  \begin{split}
  \rho_3(x)
&=e^{-\tk1(x)}\ints\kk(x,y)\rho_2(y)\dd\mu(y)
\\&\qquad
+e^{-\tk1(x)}\frac12\ints\kk(x,y)\rho_1(y)\dd\mu(y)
\ints\kk(x,z)\rho_1(z)\dd\mu(z)
\\&
=\rho_1(x)\tk(\rho_2)(x)+\tfrac12\rho_1(x)\bigpar{\tk(\rho_1)(x)}^2,	
  \end{split}
\end{equation}
and the three ways to get $|\bpk(x)|=4$, 
\begin{equation}\label{rho4}
  \rho_4=\rho_1T(\rho_3)+\rho_1T(\rho_1)T(\rho_2)+\frac16\rho_1(T \rho_1)^3,
\end{equation}
and so on. In general, for $\rho_k$, $k\ge2$, we get one term
$\rho_1\prod_jT(\rho_j)^{m_j} /m_j!$
for each partition $1^{m_1}2^{m_2}\dotsm$ of $k-1$.

The numbers $\rhokkk$ are then obtained by integration.

Alternatively, a similar recursion can be given for the probability
that $\bpk(x)$ has the shape of a given tree; this can then be summed
over all trees of a given size.

\subsection{The \ER{} case}
  \label{SSER}
Let $\sss$ consist of a single point, with $\musss=1$. Thus, $\kk$
is a positive number.
(More generally, a constant $\kk$ on any probability space
$(\sss,\mu)$ yields the same results.)
We keep to more traditional notation by letting $\kk=\gl>0$; then
$\gnk=\gnp$ with $p=\gl/n$. See \cite[Example 4.1]{kernels}.

Since $\tk$ is just multiplication by $\gl$, $\norm{\tk}=\gl$, and,
as is well-known, $\kk$ is subcritical if $\gl<1$, critical if
$\gl=1$, and supercritical if $\gl>1$. 

In the subcritical case, by \eqref{bp1b} or \refT{TBP1}(i),
\begin{equation}\label{suser}
  \sus(\kk)=\frac1{1-\gl},
\qquad \gl<1.
\end{equation}
\refT{Tbounded} or
\refT{Tiid} shows that 
$\sus(\gnln)\pto(1-\gl)\qw$ for every constant $\gl<1$. (This and
more detailed results are shown by \citet{SJ218} by another
method. See also \citet[Section 2.2]{Durrett} for the expectation
$\E\sus(\gnln)$.)

Similarly, if $\gl\ge1$ then $\sus(\gnln)\pto\sus(\kk)=\infty$ by
\refT{Tcritical} and any of Theorems \ref{Tinfty}, \ref{Tbounded} or
\ref{Tiid}.

For $\susq$, we have the same results for $\gl\le1$. In the
supercritical case $\gl>1$, $\tkq$ is multiplication by
$\gl(1-\rho(\gl))<1$, where $1-\rho(\gl)=\exp(-\gl\rho(\gl))$ by
\eqref{phik}.
Hence, by Theorems \refand{Tbounded}{TBP1}, or \eqref{bp1d},
for $\gl>1$,
\begin{equation}\label{susqer}
  \susq(\gnln)\pto\susq(\kk)=\frac{\muq(\sss)}{1-\gl(1-\rho(\gl))}
=\frac{1-\rho(\gl)}{1-\gl(1-\rho(\gl))}.
\end{equation}

More generally, \refT{Tbounded} shows that $\susq(G(n,\gl_n/n))\pto\susq(\gl)$
for every sequence $\gl_n\to\gl>0$.

For $\gl=1+\eps$, $\eps>0$, we have the Taylor expansion
\begin{align}
 \rho(1+\eps)&=2\eps-\frac83\eps^2+\frac{28}9\eps^3-\frac{464}{135}\eps^4+\dots
\intertext{and thus}
\label{superer}
 \susq(1+\eps)&={\eps}\qw-\frac43+\frac43\eps-\frac{176}{135}\eps^2+\dots
\end{align}
Combining \eqref{suser} and \eqref{superer}, we see that, 
as shown by \refT{Tanalytic}, $\susq(\gl)\sim1/|\gl-1|$ for
$\gl$ on both sides of 1, but the second order terms are different for
$\gl\upto1$ and $\gl\downto1$.

We can also obtain $\sus(\gl)$ and $\susq(\gl)$ from $\rhok$ and the
formulae \eqref{suskk} and \eqref{suskkq}. 
In this case, $\bpk$ is an ordinary, single-type, Galton--Watson
process with Poisson distributed offspring, and
it is well-known, see \eg{}
\cite{Borel,Otter,Tanner,Dwass,Takacs:ballots,Pitman:enum}, that
$|\bpk|$ has a Borel distribution (degenerate if $\gl>1$), \ie,
\begin{equation}
  \rhok(\kk)=\rhok(x)=\frac{k^{k-1}}{k!}\gl^{k-1}e^{-k\gl},
\qquad k\ge1.
\end{equation}
Consequently, if $\cT(z)\=\sum_{k=1}^\infty \frac{k^{k-1}}{k!}z^{k}$ is
the tree function, then 
\begin{equation}\label{er2}
  \rho(\kk)=1-\sum_{1\le k<\infty} \rhok(\kk)=1-\frac{\cT(\gl e^{-\gl})}{\gl}
\end{equation}
and, using the well-known identity $z\cT'(z) = \cT(z)/(1-\cT(z))$, see \eg{}
\cite{SJ97}, 
\begin{equation}\label{er3}
  \susq(\kk)=\sum_{1\le k<\infty} k\rhok(\kk)
=\sumk \frac{k^{k}}{k!}\gl^{k-1}e^{-k\gl}
=\gl\qw\frac{\cT(\gl e^{-\gl})}{1-\cT(\gl e^{-\gl})}.
\end{equation}

In the subcritical case, when $\gl<1$, we have $\cT(\gl e^{-\gl})=\gl$, and we recover
\eqref{suser}. 
In general, \eqref{er2} and \eqref{er3} yield \eqref{susqer}.

\begin{remark}
  Consider the random graph $\gnm$ with a given number $m$ of
  edges. In the subcritical case $m\sim \gl n/2$ with $0<\gl<1$, we
  obtain $\sus(\gnm)\pto\sus(\kk)=1/(1-\gl)$ by comparison with $\gnp$
  with $p=\gl_n/n$ for $\gl_n=2m/n\pm n^{-1/3}$, say, using \refL{Lmon}.
  In the supercritical case $\gl>1$,
one can use standard results on the numbers 
  of vertices and edges in the giant component; conditioning on the giant
 component assuming typical values, the rest of the graph is essentially a subcritical
 instance of $\gnm$ with different parameters; this may be compared
  with $\gnp$ as above. 
Consequently, for
$m\sim \gl n/2$ with $\gl>1$,
  $\susq(\gnm)\pto\susq(\kk)$, where $\susq(\kk)$ is given by
  \eqref{susqer} and \eqref{er3}, just as for
  $\gnp$ with $p=\gl/n$.
\end{remark}

\subsection{The rank 1 case}\label{SSrank1} 
Suppose that $\kk(x,y)=\psi(x)\psi(y)$ for some positive integrable function
$\psi$ on $\sss$. This is the \emph{rank 1 case} studied in
\cite[Section 16.4]{kernels}; note that $\tk$ is the rank 1 operator
$f\mapsto\innprod{f,\psi}\psi$, 
with $\psi$ as eigenfunction,
provided $\psi\in L^2(\mu)$.

We assume, for simplicity, that $\musss=1$. 
As in \refS{Sthreshold} we consider the family
of kernels $\gl\kk$, $\gl>0$. In this case,
$\norm\tk=\normll{\psi}^2=\ints\psi^2$, and thus
$\glc=\normll{\psi}\qww$.

In the subcritical case, $\gl<\glc=\lrpar{\int\psi^2}\qw$, which
entails $\ints\psi^2<\infty$, we have by induction
\begin{equation*}
  \tlk^j1(x) = \gl^j\Bigpar{\ints\psi^2\dd\mu}^{j-1}\ints\psi\dd\mu 
\cdot\psi(x),
\qquad j\ge1,
\end{equation*}
and thus by \eqref{bp1b} (or by solving \eqref{em})
\begin{equation}\label{bsub}
  \begin{split}
  \sus(\glk)=\susq(\glk)
&
=1+\frac{\gl\lrpar{\int\psi}^2}{1-\gl\int\psi^2}
=1+\frac{\gl\lrpar{\int\psi}^2}{1-\gl/\glc}
\\&
=\frac{\lrpar{\int\psi}^2/\int\psi^2}{1-\gl/\glc}
+1-\frac{\lrpar{\int\psi}^2}{\int\psi^2}.	
  \end{split}
\end{equation}
In particular, this verifies the formula in \refT{Tsub}.

In the supercritical case, we first note that the equation \eqref{phik}
for $\rho=\rho_{\glk}$ becomes
\begin{equation}
  \label{b1}
\rho=1-e^{-\gl\tk\rho}
=1-e^{-\gl\innprod{\rho,\psi}\psi}.
\end{equation}
We define $\xi\in\ooop$ by $\xi\=\gl\innprod{\rho,\psi}$, and thus have
\begin{equation}
  \label{b2}
\rho=1-e^{-\xi\psi},
\end{equation}
with $\xi$ given by the implicit equation
\begin{equation}
  \label{b3}
\xi
=\gl\ints\rho(x)\psi(x)\dd\mu(x)
=\gl\ints\psi(x)\Bigpar{1-e^{-\xi\psi(x)}}\dd\mu(x).
\end{equation}
(See \cite[Section 16.4]{kernels}, where the notation is somewhat different.)
We know, by results from \cite{kernels}, that \eqref{b1} has a unique positive
solution $\rho$ for every $\gl>\glc$; thus \eqref{b3} has a unique
solution $\xi=\xi(\gl)>0$ for every $\gl>\glc$.

It is easier to use $\xi$ as a parameter; by \eqref{b3} we have
\begin{equation}
  \label{b4}
\gl
=\frac{\xi}{\int\lrpar{1-e^{-\xi\psi}}\psi}.
\end{equation}
The denominator is finite for every $\xi>0$ since $\psi\in L^1$;
moreover, $\int(1-e^{-\xi\psi})\psi<\int\xi\psi^2$, and thus \eqref{b4}
yields $\gl>1/\int\psi^2=\glc$. Consequently,
\eqref{b3} and \eqref{b4} give a bijection between
$\gl\in(\glc,\infty)$ and $\xi\in\ooo$. 
Furthermore, differentiation of \eqref{b4} shows that $\gl=\gl(\xi)$
is differentiable, and it follows easily from 
$\int(1-e^{-\xi\psi})\psi>\int\xi\psi^2e^{-\xi\psi}$ that $\dd\gl/\dd\xi>0$.
Hence, the function $\gl(\xi)$ and its inverse  $\xi(\gl)$
are both strictly increasing and continuous. 
In particular, $\gl\downto\glc\iff\xi\downto0$.
Moreover, the denominator in \eqref{b4} is an analytic function of
\emph{complex} $\xi$ with $\Re\xi>0$; hence 
$\gl(\xi)$ and its inverse  $\xi(\gl)$ are analytic, for $\xi>0$ and
$\gl>\glc$, respectively.

We note also the following equivalent formula, provided $\ints\psi^2<\infty$:
\begin{equation}
  \label{b4x}
\frac1{\glc}-\frac1{\gl}
={\xi\qw}{\ints\lrpar{e^{-\xi\psi}-1+\xi\psi}\psi}.
\end{equation}

By \eqref{tkq} and \eqref{b2},
\begin{equation}
  \label{b5}
\tlkq f
= \tlk\bigpar{(1-\rho)f}
=\gl\innprod{(1-\rho)f,\psi}\psi
=\gl\ints e^{-\xi\psi(x)}\psi(x)f(x)\dd\mu(x)\,\psi.
\end{equation}
Hence $\tlkq$ too is a rank 1 operator, with eigenfunction $\psi$ and 
eigenvalue (take $f=\psi$ in \eqref{b5})
\begin{equation}
  \label{b6}
\gam
=\gl\ints e^{-\xi\psi(x)}\psi(x)^2\dd\mu(x)
=\frac{\xi\int e^{-\xi\psi}\psi^2}
{\int\lrpar{1-e^{-\xi\psi}}\psi}.
\end{equation}
Since $y^2e^{-y}<y(1-e^{-y})$ for $y>0$, it follows that $0<\gam<1$.
(When $\int\psi^2<\infty$, 
this follows also from the general result \cite[Theorem 6.7]{kernels}, \cf{}
\refT{TBP1}.) Hence $I-\tlkq$ is invertible (in, for example, $L^2(\muq)$),
and by \refT{TBP1}(ii),
\begin{equation}\label{b7}
  \susq(\glk;x)
=(1-\rho(x))(I-\tlkq)\qw1(x)
=e^{-\xi\psi(x)}(I-\tlkq)\qw1(x).
\end{equation}
Let us write $g:=(I-\tlkq)\qw1$. Then, by \eqref{b5}, 
$1=(I-\tlkq)g=g-\zeta\psi$, 
with $  \zeta=\gl\ints e^{-\xi\psi}\psi g$. Hence,
$g=1+\zeta\psi$ and, 
using \eqref{b6},
\begin{equation*}
  \zeta=\gl\ints e^{-\xi\psi}\psi g
=\gl\ints e^{-\xi\psi}\psi +\gl\zeta\ints e^{-\xi\psi}\psi^2
=\gl\ints e^{-\xi\psi}\psi +\zeta\gam.
\end{equation*}
Hence, using \eqref{b4} and \eqref{b6}, 
\begin{equation*}
\zeta=\frac{\gl\int e^{-\xi\psi}\psi}{1-\gam}
=\frac{\xi\int e^{-\xi\psi}\psi}
{\int\bigpar{1- e^{-\xi\psi}}\psi-\xi\int e^{-\xi\psi}\psi^2}.
\end{equation*}
Finally, by \eqref{b7},
\begin{equation}\label{b8}
  \begin{split}
\susq
&=\ints\susq(\glk;x)\dd\mu(x)
=\ints e^{-\xi\psi}g
=\ints e^{-\xi\psi}	+\zeta\ints e^{-\xi\psi}\psi
\\&
=\ints e^{-\xi\psi}
+\frac{\xi\lrpar{\int e^{-\xi\psi}\psi }^2}
{\int\bigpar{1- e^{-\xi\psi}(1+\xi\psi)}\psi}.
  \end{split}
\end{equation}

We observe that \eqref{b8} shows that $\susq$ is an analytic function
of $\xi\in\ooop$, and thus of $\gl\in(\glc,\infty)$. (So in the rank 1
case, at least, the condition \eqref{t5a} 
is not required for  \refT{Tanalytic}(i).)

Next, suppose that $\ints\psi^3<\infty$. In this case, we can
differentiate twice under the integral signs in \eqref{b4} and
\eqref{b8} using
dominated convergence (comparing with $\ints\psi^3$), and 
taking Taylor expansions we see that as $\xi\to 0$ we have
\begin{equation}\label{b9}
\gl
=\frac{\xi}
{\xi\int\psi^2-\frac12\xi^2\int\psi^3+o(\xi^2)}
=\glc+ \frac12\xi\frac{\int\psi^3}{\lrpar{\int\psi^2}^2}+o(\xi)
\end{equation}
and
\begin{equation}\label{b10}
\susq
=O(1)
+\frac{\xi\lrpar{\int\psi+O(\xi)}^2}
{\frac12\xi^2\int\psi^3+o(\xi^2)}
\sim
\frac{2\lrpar{\int\psi}^2}{\int\psi^3}\xi\qw
\sim\frac{\lrpar{\int\psi}^2/\lrpar{\int\psi^2}^2}{\gl-\glc}
,
\end{equation}
where we used \eqref{b9} in the last step.

Note that \eqref{bsub} and \eqref{b10} show that the behaviour of
$\susq$ at the critical point $\glc$ is symmetrical to the first order: 
\begin{equation}\label{b11}
\susq(\glk)
\sim\frac{\lrpar{\int\psi}^2/\lrpar{\int\psi^2}^2}{|\gl-\glc|}
= \frac{\lrpar{\int\psi}^2/\int\psi^2}{|\gl/\glc-1|} ,
\qquad \gl\to\glc,
\end{equation}
at least when $\int\psi^3<\infty$.
(This is the same first order asymptotics as given by
\refT{Tanalytic}(ii), but note that the latter applies only when
$\psi$ is bounded, since \eqref{t5a} fails otherwise.)
The second order terms are different on the two sides of $\glc$, though: 
if $\int\psi^4<\infty$,
then carrying the Taylor expansions above one step further leads to
\begin{equation}\label{bsup}
  \begin{split}
\susq(\glk)
&
=\frac{\lrpar{\int\psi}^2/\int\psi^2}{\gl/\glc-1}
+1+\frac{\lrpar{\int\psi}^2}{\int\psi^2}	
-\frac{4\int\psi\int\psi^2}{\int\psi^3}	
+\frac{2\lrpar{\int\psi}^2\int\psi^4}{3\lrpar{\int\psi^3}^2}	
\\&\qquad
+o(1),
\qquad \gl\downto\glc,
  \end{split}
\end{equation}
in contrast to \eqref{bsub} for $\gl<\glc$.

To see what may happen if $\ints\psi^3=\infty$, we look at a few specific
examples. 

\begin{example}\label{Em}
Let $2<q<3$ and take $\sss=[1,\infty)$ with
  $\dd\mu(x)=qx^{-q-1}\dd x$, and take $\psi(x)=x$; note that $\ints
  \psi^p<\infty$ if and only if $p<q$; in particular
  $\ints\psi^2<\infty$ but $\ints\psi^3=\infty$.
By \eqref{b4x}, and standard integration by parts of Gamma integrals,
as $\xi\to0$ we have
\begin{equation*}
  \begin{split}
	\frac1{\glc}-\frac1{\gl}
&=
\xi\qw\int_1^\infty \bigpar{e^{-\xi x}-1+\xi x}qx^{-q}\dd x
=
q\xi^{q-2}\int_\xi^\infty \bigpar{e^{-y}-1+y}y^{-q}\dd y
\\&
\sim q\xi^{q-2}\int_0^\infty \bigpar{e^{-y}-1+y}y^{-q}\dd y
= q\xi^{q-2} \Gamma(1-q),
  \end{split}
\end{equation*}
or $\gl-\glc\sim q\Gamma(1-q)\glc^2\xi^{q-2}$.
Similarly, by another integration by parts,
\begin{equation*}
  \begin{split}
\ints\bigpar{1&- e^{-\xi\psi}(1+\xi\psi)}\psi\dd\mu
=\int_1^\infty \bigpar{1-e^{-\xi x}(1+\xi x)}qx^{-q}\dd x
\\&
=q\xi^{q-1} \int_\xi^\infty \bigpar{1-e^{-y}(1+y)}y^{-q}\dd y
\sim q\xi^{q-1} \int_0^\infty \bigpar{1-e^{-y}(1+y)}y^{-q}\dd y
\\&
= \frac{q\xi^{q-1}}{q-1} \Gamma(3-q)
= q(q-2)\xi^{q-1} \Gamma(1-q),
  \end{split}
\end{equation*}
and thus by \eqref{b8},
\begin{equation*}
  \begin{split}
\susq
\sim\frac{\xi\lrpar{\int\psi}^2}
{q(q-2)\xi^{q-1} \Gamma(1-q)}
\sim\frac{\lrpar{\int\psi}^2\glc^2}
{(q-2)(\gl-\glc)},
\qquad \gl\downto\glc,
  \end{split}
\end{equation*}
which still has power $-1$, but differs by a factor $(q-2)\qw$ from
the subcritical asymptotics in \eqref{bsub} and \refT{Tsub}.
Hence, \eqref{b11} does \emph{not} hold in general without assuming
$\ints\psi^3<\infty$. (Although this integral does not appear in the
formula.)
\end{example}

\begin{example}\label{Er}
We see in \refE{Em} that $\susq$ is relatively large in the
barely supercritical phase when
$\psi$ is only a little more than square integrable. We can pursue
this further by taking the same $\sss$ and $\psi$, and
$\dd\mu(x)=c(\log x+1)^{-q}x^{-3}\dd x$ with $q>1$ and a normalization
constant $c$. Similar calculations using \eqref{b4x} and \eqref{b10} 
(we omit the details) show that, letting 
$c$ denote different positive constants (depending on $q$), as $\xi\to0$ we have
$\gl-\glc\sim c (\log(1/\xi))^{-(q-1)}$ and 
$\susq\sim c (\log(1/\xi))^{q}$, and thus
\begin{equation*}
  \susq(\glk) \sim c (\gl-\glc)^{-q/(q-1)},
\qquad \gl\downto\glc,
\end{equation*}
with an exponent $-q/(q-1)$, which can be any real number in
$(-\infty,-1)$. 

Taking instead
$\dd\mu(x)=c(\log\log x)\qww(\log x)^{-1}x^{-3}\dd x$, $x>3$, we
similarly find 
$\gl-\glc\sim c (\log\log(1/\xi))\qw$ and 
$\susq\sim c (\log(1/\xi))(\log\log(1/\xi))^2$, and thus
\begin{equation*}
  \susq(\glk) =\exp\lrpar{-\frac{c+o(1)}{\gl-\glc}},
\qquad \gl\downto\glc,
\end{equation*}
with an even more dramatic singularity. Of course, this sequence of
examples can be continued to yield towers of exponents.
\end{example}

\subsection{The CHKNS model}\label{SSCHKNS}
Consider the family of kernels $\glk$, $\gl>0$, with
\begin{align}
\kk(x,y)
\={\frac1{x\bmax y}-1}
\label{chkns}
\end{align}
on $\sss=(0,1]$ with Lebesgue measure $\mu$.
We thus have
\begin{equation}
  \label{ctk}
  \begin{split}
\tlk f(x)& = 
\gl\Bigpar{\frac1x-1}\int_0^x f(y)\dd y 
+ \gl\int_x^1\Bigpar{\frac1y-1} f(y)\dd y
\\&=
\frac\gl x\int_0^x f(y)\dd y 
+ \gl\int_x^1\frac{ f(y)}y\dd y
- \gl\int_0^1 f(y)\dd y.	
  \end{split}
\end{equation}

\begin{remark}
  Equivalently, by a change of variable, we could consider 
the kernel $\gl(e^{x\bmin y}-1)$ on $\sss=\ooo$ with
$\dd\mu=e^{-x}\dd x$; we leave it to the reader to reformulate results
in this setting.
\end{remark}

This kernel arises in connection with the CHKNS model of a random
graph introduced by
\citet{CHKNS}.
This graph grows from a single
  vertex; vertices are added one by one, and after each vertex is
  added, an edge is added with probability $\gd\in(0,1)$; the endpoints are
  chosen uniformly among all existing vertices. 
Following Durrett \cite{Durrett:CHKNS,Durrett}, we consider a modification
where at each step a Poisson $\Po(\gd)$ number of edges are added to the
graph, again with endpoints chosen uniformly at random.
As discussed in detail in \cite[Section 16.3]{kernels}, this yields a 
random graph of the type $\gnkxn$ for a graphical sequence of kernels
$(\kk_n)$ with limit $\glk$, where $\gl=2\gd$, 
on a suitable vertex space $\vxs$ (with $\sss$ and $\mu$ as above).

Let us begin by solving \eqref{em}. If $f=\tlk f+1$, then \eqref{ctk}
implies first that $f\in C(0,1)$ and then $f\in C^1(0,1)$. Hence we
can differentiate and find, using \eqref{ctk} again, that
\begin{equation}\label{julie}
  f'(x)=(\tlk f)'(x) = -\frac{\gl}{x^2} \int_0^x f(y)\dd y.
\end{equation}
With $F(x)\=\int_0^x f(y) \dd y$, this yields $F''(x)=-\gl F(x)/x^2$,
with the solution $F(x)=C_1x^{\ga_+} + C_2 x^{\ga_-}$, where
$\ga_\pm$ are the roots of $\ga(\ga-1)=-\gl$, \ie,
$\ga_\pm=\frac12\pm\sqrt{\frac14-\gl}$; if $\gl=1/4$ we have a double
root $\ga_+=\ga_-=1/2$ and the solution is 
$F(x)= C_1 x^{1/2} + C_2 x^{1/2}\log x$. Hence any integrable solution
of \eqref{em} must be of the form $f(x)=C_+x^{\ga_+-1} + C_- x^{\ga_--1}$,
or $f(x)= C_+ x^{-1/2} + C_- x^{-1/2}\log x$ if $\gl=1/4$. Any such
$f$ satisfies \eqref{julie}, and since \eqref{ctk} yields $\tlk f(1)=0$,
it solves \eqref{em} if and only if $f(1)=1$, \ie, if $C_++C_-=1$
($C_+=1$ if $\gl=1/4$).

If $0<\gl<1/4$, then $0<\ga_-<1/2<\ga_+<1$, so the solution
$f(x)=x^{\ga_+-1}$ is in $L^2(0,1)$ and non-negative; by \refC{Ceq}, 
this is the unique non-negative solution in $L^2$, and
\begin{equation}\label{csub}
 \sus(\glk)=\intoi x^{\ga_+-1}\dd x = \frac{1}{\ga_+}
=\frac2{1+\sqrt{1-4\gl}}
=\frac{1-\sqrt{1-4\gl}}{2\gl}.
\end{equation}
(If we are lucky, or with hindsight, we may observe directly that
$x^{\ga_+-1}$  is a solution of \eqref{em} by \eqref{sjw} below,
and apply \refC{Ceq} directly, eliminating most of the analysis above.)

For $\gl<1/4$, we have shown that $\sus(\glk)$ is finite, 
so $\gl\kk$ is subcritical; thus $\glc\ge1/4$. 
Since the \rhs{} in
\eqref{csub} has a singularity at $\gl=1/4$, \refT{TB} shows that
$\glc>1/4$ is impossible, so we conclude that
$\glc=1/4$. (Equivalently, $\norm{\tk}=4$.) 
This critical value for the
CHKNS model has earlier been found by \citet{CHKNS}
by a non-rigorous method, also using \eqref{csub} which they found in
a different way; 
another non-rigorous proof was given by
Dorogovtsev, Mendes and Samukhin~\cite{DMS-anomalous},
and the first rigorous proof was given  by
Durrett \cite{Durrett:CHKNS,Durrett}. See also
Bollob\'as, Janson and Riordan~\cite{SJ163,kernels},
where different methods were used not involving the susceptibility.
The argument above seems to be new.

By \refT{TB}, we can let $\gl\upto\glc$ in \eqref{csub}, and see that
the equation holds for $\gl=\glc=1/4$ too; i.e., $\sus(\glc\kk)=2$.

We see also that in the (sub)critical case $\gl\le1/4$,
$\sus(\glk;x)=x^{\ga_+-1}$.

We have no need for the other solutions of \eqref{em}, but note
that  our analysis shows that for $\gl<\glc$,
the other non-negative, integrable
solutions of \eqref{em} are given by 
$x^{\ga_+-1} + C(x^{\ga_--1}-x^{\ga_+-1})$, with $C>0$.
Similarly, although we have no need for the solutions of \eqref{em} for
$\gl\ge\glc$, let us note that for the critical case $\gl=\glc$, the
argument above shows that there is a minimal non-negative solution
$x^{-1/2}$, which belongs to $L^1$ but not to $L^2$; there are further
solutions $x^{-1/2}-Cx^{-1/2}\log x$, $C>0$. For $\gl>1/4$, the roots
$\ga_\pm$ are complex, and the only real integrable solution to \eqref{em} is
$\frac12(x^{\ga_+-1}+x^{\ga_--1})=\Re x^{\ga_+-1}=
x^{-1/2}\cos\bigpar{\xpar{\gl-\frac14}\qq\log x}$, which oscillates;
thus there is no finite non-negative solution at all.

Before proceeding to $\susq$ in the supercritical case, let us
calculate $\rho_k$ for small $k$. 
We begin by observing, from \eqref{ctk}, that $\tlk1(x)=-\gl\log x$. 
Hence \eqref{rho1} yields
\begin{equation}
  \label{crho1}
\rhox1(\glk;x)=e^{\gl\log x} = x^\gl.
\end{equation}
Further, by \eqref{ctk}, for every non-zero $\gam>-1$,
\begin{equation}\label{sjw}
  \tlk (x^\gam) =\frac{\gl}{\gam(\gam+1)}(1-x^\gam).
\end{equation}
Hence \eqref{rho2} yields
\begin{equation}
  \label{crho2}
\rhox2(\glk;x)=x^\gl\tlk(x^\gl)=\frac{1}{1+\gl}(x^{\gl}-x^{2\gl}).
\end{equation}
Similarly, \eqref{rho3} and \eqref{rho4} yield
\begin{align}
  \label{crho3}
\rhox3(\glk;x)
&=\frac{(2+3\gl)x^{3\gl}-4(1+2\gl)x^{2\gl}+(2+5\gl)x^\gl}
{2(1+\gl)^2(1+2\gl)},
\end{align}
and a formula for $\rhox4(\glk;x)$ that we omit, 
and so on.
By integration we then obtain
\begin{align}
  \rhox1(\glk)&=\frac{1}{1+\gl},
\\
  \rhox2(\glk)&=\frac{\gl}{(1+\gl)^2(1+2\gl)},
\\
  \rhox3(\glk)&=\frac{3\gl^2}{(1+\gl)^3(1+2\gl)(1+3\gl)},
\\
  \rhox4(\glk)&=\frac{2\gl^3(7+15\gl)}{(1+\gl)^4(1+2\gl)^2(1+3\gl)(1+4\gl)}.
\end{align}

It is obvious that each $\rhok(\glk;x)$ is a polynomial in $x^\gl$
with coefficients that are rational functions in $\gl$, with only
factors $1+j\gl$, $j=1,\dots,k$ in the denominator. Hence,
each $\rho(\glk)$ is a rational function of the same type.

There is no obvious general formula for the numbers $\rhok(\glk)$,
but, surprisingly, they satisfy a simple quadratic
recursion, given in the following theorem. This recursion was found by
\citet{CHKNS}, using their recursive
construction of the graph, see also \cite[Chapter 7.1]{Durrett}.
(The argument in \cite{CHKNS} is non-rigorous, but as
pointed out by Durrett~\cite{Durrett:CHKNS,Durrett}, 
it is not hard to make it rigorous.)
We give here a proof that instead uses the branching process, which
gives more detailed information 
about the distribution of the `locations' of the components. 

\begin{theorem}
For the CHKNS kernel \eqref{chkns}, 
$\rhok(\glk)$ satisfies the recursion
\begin{equation}
  \label{crho}
\rhok(\glk)
=\frac{k\gl}{2(1+k\gl)}\sum_{j=1}^{k-1}\rho_{k-j}(\glk)\rho_j(\glk),
\qquad k\ge2,
\end{equation}
with $\rhox1(\glk)=1/(1+\gl)$.
Hence,
for each $k\ge1$,
$\rhok(\glk)$ is
a rational function of  $\gl$, 
with poles only at $-1/j$, $j=1\dots, k$.

Moreover, each function $\rhok(x)=\rho_k(\glk;x)$ is a polynomial in
$x^\gl$, with coefficients that are rational functions of $\gl$,
which can be calculated recursively by 
\begin{equation}
  \label{c1}
x\ddx\rhok(\glk;x)
=k\gl\rhok(\glk;x)-\sum_{j=1}^{k-1}j\gl\rho_{k-j}(\glk)\rho_j(\glk;x),
\qquad k\ge1,
\end{equation}
together with the boundary conditions $\rhox1(\glk;1)=1$ and
$\rhox{k}(\glk;1)=0$, $k\ge 2$. 
\end{theorem}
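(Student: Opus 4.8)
The plan is to work directly with the branching process $\bplk(x)$ and its total‑size generating function $g(s;x)\=\sum_{k\ge1}\rho_k(\glk;x)s^k=\E\bigpar{s^{|\bplk(x)|};\,|\bplk(x)|<\infty}$, $s\in\oi$. Conditioning on the first generation of $\bplk(x)$ --- a Poisson process on $\sss$ of intensity $\glk(x,\cdot)\dd\mu$ --- and using that the subtrees hanging off distinct children are independent given their types, one gets $g(s;x)=s\exp\bigpar{-\tlk(1-g(s;\cdot))(x)}$; since $\tlk1(x)=-\gl\log x$ (the computation behind \eqref{rho1}, \eqref{crho1}) this reads
\begin{equation*}
 g(s;x)=sx^\gl\exp\bigpar{\tlk g(s;\cdot)(x)}.
\end{equation*}
Expanding the exponential and reading off the coefficient of $s^k$ recovers the partition recursion for $\rho_k=\rho_k(\glk;\cdot)$ recorded in \refS{Sex} (one term $x^\gl\prod_j\bigpar{\tlk\rho_j}^{m_j}/m_j!$ for each partition $1^{m_1}2^{m_2}\cdots$ of $k-1$), while the coefficient of $s$ gives $\rho_1(\glk;x)=x^\gl$; and since $\tlk f(1)=0$ for every $f$ by \eqref{ctk}, the functional equation forces $g(s;1)=s$, \ie{} the boundary conditions $\rho_1(\glk;1)=1$ and $\rho_k(\glk;1)=0$ for $k\ge2$.

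By \eqref{sjw}, $\tlk(x^{m\gl})=\bigpar{1-x^{m\gl}}/\bigpar{m(m\gl+1)}$, so $\tlk$ preserves the class of polynomials in $x^\gl$; hence, inducting on $k$ in the partition recursion, each $\rho_k(\glk;x)$ is a polynomial in $x^\gl$ of degree $k$ with coefficients rational in $\gl$, the only denominators introduced being factors $m\gl+1$ with $m\le k$, so the poles lie at $\gl=-1/j$, $1\le j\le k$. Writing $\rho_k(\glk;x)=\sum_m a_{k,m}x^{m\gl}$ and integrating, $\rho_k(\glk)=\sum_m a_{k,m}/(m\gl+1)$, rational with the same poles, and $\rho_1(\glk)=\intoi x^\gl\dd x=1/(1+\gl)$.

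The main step is \eqref{c1}. Differentiating \eqref{ctk} gives $(\tlk f)'(x)=-\gl x^{-2}\int_0^x f\dd y$ for any $f$, so the logarithmic derivative of $g=sx^\gl e^{\tlk g}$ in $x$ yields $x\,\partial_x g(s;x)=\gl g(s;x)-\tfrac\gl x g(s;x)\int_0^x g(s;y)\dd y$; equating coefficients of $s^k$,
\begin{equation*}
 x\rho_k'(x)=\gl\rho_k(x)-\frac\gl x\sum_{i+j=k}\rho_i(x)\int_0^x\rho_j(y)\dd y .
\end{equation*}
To reach the stated form one rewrites the integral terms via the polynomial structure --- note $x\,\partial_x\tlk(x^{m\gl})=-\gl x^{m\gl}/(m\gl+1)$ while $\int_0^1x^{m\gl}\dd x=1/(m\gl+1)$, so $\rho_j(\glk)=\sum_m a_{j,m}/(m\gl+1)$ is exactly the value at $x=1$ of the relevant antiderivative --- together with the companion identity $(k-1)\rho_k(x)=\sum_{j=1}^{k-1}j\,\rho_{k-j}(x)\,\tlk\rho_j(x)$, obtained the same way by differentiating $g=sx^\gl e^{\tlk g}$ in $s$ instead of $x$. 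This is the delicate point, and it is \emph{not} a term‑by‑term identity (already for $k=3$ the $j=1$ and $j=2$ contributions cancel only after summing); the cleanest route is probably induction on $k$, feeding in, for the indices $<k$, the numerical recursion derived in the next step. Granting \eqref{c1}, the boundary conditions of the first paragraph determine every $\rho_k(\glk;\cdot)$ recursively.

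Finally, integrating \eqref{c1} over $x\in(0,1)$ gives \eqref{crho}: since $\rho_k(\glk;x)=O(x^\gl)$ as $x\downto0$ and $\rho_k(\glk;1)=0$ for $k\ge2$, integration by parts gives $\intoi x\rho_k'(\glk;x)\dd x=-\rho_k(\glk)$, whence $(1+k\gl)\rho_k(\glk)=\gl\sum_{j=1}^{k-1}j\,\rho_{k-j}(\glk)\rho_j(\glk)$; substituting $j\mapsto k-j$ and averaging turns $\sum_j j\,(\cdot)$ into $\tfrac k2\sum_j(\cdot)$, which is exactly \eqref{crho}. Together with $\rho_1(\glk)=1/(1+\gl)$ and the rationality above, this completes the proof. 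The only genuine obstacle is the bookkeeping in passing from the differentiated functional equation to \eqref{c1}.
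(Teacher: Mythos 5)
Your outer framework is sound, and several pieces coincide with the paper's own argument: the functional equation $g(s;x)=s\,x^{\gl}\exp\bigpar{\tlk g(s;\cdot)(x)}$ is correct, the boundary conditions follow as you say from $\kk(1,\cdot)=0$ (so $\tlk f(1)=0$), the polynomial-in-$x^{\gl}$ and rationality statements follow from \eqref{sjw} (minor caveat: $\tlk$ sends constants to $-\gl\log x$, so the induction needs the observation that every $\rhok(\glk;\cdot)$ is divisible by $x^{\gl}$, which your partition recursion does give), and your last step --- integrating \eqref{c1} by parts and symmetrizing $j\mapsto k-j$ --- is exactly the paper's passage from \eqref{c1} to \eqref{crho}. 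The problem is that the heart of the theorem is \eqref{c1}, and you do not prove it. Writing $\rho_j(x)$ for $\rho_j(\glk;x)$, differentiating the functional equation in $x$ gives, as you say,
\begin{equation*}
 x\rho_k'(x)=\gl\rho_k(x)-\frac{\gl}{x}\sum_{j=1}^{k-1}\rho_{k-j}(x)\int_0^x\rho_j(y)\dd y,
\end{equation*}
which is a true identity but a \emph{different} one from \eqref{c1}: it has the local averages $x\qw\int_0^x\rho_j$ where \eqref{c1} has the global quantities $\rho_j(\glk)=\intoi\rho_j$, and the coefficient $\gl$ instead of $k\gl$. Reconciling the two amounts to proving
\begin{equation*}
 (k-1)\rho_k(x)=\sum_{j=1}^{k-1}j\,\rho_{k-j}(\glk)\,\rho_j(x)-\frac1x\sum_{j=1}^{k-1}\rho_{k-j}(x)\int_0^x\rho_j(y)\dd y,
\end{equation*}
and neither your companion identity $(k-1)\rho_k=\sum_j j\,\rho_{k-j}\,\tlk\rho_j$ nor the suggested induction ``feeding in the numerical recursion for smaller indices'' is shown to yield it; you flag it as ``the delicate point'' and then proceed ``granting \eqref{c1}''. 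Since everything else is routine once \eqref{c1} is available, this is a genuine gap rather than bookkeeping: the identity mixes local with global integrals, does not hold term by term (as you note for $k=3$), and it is not demonstrated that the joint induction on \eqref{c1} and \eqref{crho} closes.

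For comparison, the paper proves \eqref{c1} by a probabilistic argument that supplies exactly the ingredient your manipulation is missing. The CHKNS kernel has the special property that rescaling all types by $1-\eps$ changes the kernel by (essentially) the constant $\eps\gl$; coupling $\bplk(x)$ with the rescaled process, each individual independently ``adopts'' extra children at rate about $\eps\gl$, and the adopted child's type is nearly uniform on $(0,1]$. Conditioning on the rescaled family having size $j$, a single adoption completes the family to size $k$ with probability $j\gl\rho_{k-j}(\glk)\eps+O(\eps^2)$ --- this is precisely where the integrated quantities $\rho_{k-j}(\glk)$ and, via the no-adoption factor $1-k\gl\eps$, the coefficient $k\gl$ come from. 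Comparing $\rhok(\glk;(1-\eps)x)$ with $\rhok(\glk;x)$ and letting $\eps\downto0$ gives \eqref{c1} directly. To salvage your generating-function route you would need an actual proof of the displayed reconciliation identity (or of \eqref{c1} in the equivalent form $x\partial_xg=\gl\bigpar{1-\intoi g(s;y)\dd y}\,s\partial_sg$); at present the central identity is asserted, not proved.
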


\begin{proof}
Fix $\gl>0$. To simplify the notation, throughout this
proof we write $\gk$ for the kernel so far denoted $\glk$.
Let $\eps\in(0,1/2)$, say, and 
  let $\bpk'$ be $\bpk$ with all points scaled by the factor
  $(1-\eps)$;
this is the branching process defined by $\sss'\=(0,1-\eps]$,
  $\dd\mu'\=(1-\eps)\qw\dd x$ and 
$\kk'(x,y)\=\gl\bigpar{\frac{1-\eps}{x\bmax y}-1}$.	
In $\bpk'$, the offspring process of an individual of type $x$ has
  intensity
  \begin{equation*}
\kk'(x,y)\dd\mu'(y)=\gl\Bigpar{\frac{1}{x\bmax y}-\frac1{1-\eps}}\dd y
=\gk(x,y)\dd y - \frac{\eps\gl}{1-\eps} \dd y,
\qquad y\le 1-\eps.		
  \end{equation*}
This is less than the intensity in $\bpk$. 
We let $\kk'(x,y)=0$ if $x>1-\eps$ or $y>1-\eps$, and define
$\kk''(x,y)=\gk(x,y)-\kk'(x,y)\ge0$. More precisely, for
$0<x\le1-\eps$ and $0<y\le 1$,
\begin{equation}
  \label{kk''}
\kk''(x,y)=
\begin{cases}
\frac{\eps\gl}{1-\eps}, & 0<y\le1-\eps,\\
\gl\bigpar{\frac1y-1}\le \frac{\eps\gl}{1-\eps}, & 1-\eps<y\le 1.\\  
\end{cases}
\end{equation}
Thus $\bpk(x)$ and $\bpk'(x)$ may be coupled in the natural way so
that $\bpk'(x)\subseteq\bpk(x)$ in the sense that an individual in
$\bpk'(x)$, of type $z$ say, also belongs to $\bpk(x)$, and its
children in $\bpk(x)$ are its children in $\bpk'(x)$ plus 
some children born according to an independent Poisson process with
intensity $\kk''(z,y)\dd y$; we call the latter children (if any)
\emph{adopted}.
An adopted child of type $y$ gets children and further descendants
according to a copy of $\bpk(y)$, independent of everything else.
Note that this adoption intensity $\kk''(x,y)$ is independent of
$x\in\sss'$, and that the total adoption intensity is
$\intoi \kk''(x,y)\dd y=\eps\gl+O(\eps^2)$.

Fix $k\ge1$. If $|\bpk(x)|=k$, then either $|\bpk'(x)|=k$ and there
are no adoptions, or $|\bpk'(x)|=j$ for some $j<k$ and there are one or
more adoptions, with a total family size of $k-j$.
If $|\bpk'(x)|=k$, then the probability of some adoption is
$k\eps\gl+O(\eps^2)$, and thus
\begin{equation}
  \P\bigpar{|\bpk(x)|=k \bigm| |\bpk'(x)|=k}=1-k\gl\eps+O(\eps^2).
\end{equation}
Now, suppose that $|\bpk'(x)|=j<k$. 
The probability of two or more adoptions is $O(\eps^2)$. Suppose that
there is a single adoption. If the adopted child has type $y$, the
probability that this leads to an adopted branch of size $k-j$, and
thus to $|\bpk(x)|=k$, is $\rho_{k-j}(\kk;y)$.
By \eqref{kk''}, the adoption intensity $\kk''(z,y)$ is 
independent of $z$ as remarked above, and is almost
uniform on $(0,1]$; it
follows that the probability that $|\bpk(x)|=k$, given 
$|\bpk'(x)|=j$ and that there is a single adoption, by some individual
  of type $z$ in $\bpk'(x)$, equals
  \begin{equation}
\frac{\intoi\kk''(z,y)\rho_{k-j}(\kk;y)\dd y}{\intoi\kk''(z,y)\dd y}
=\intoi\rho_{k-j}(\kk;y)\dd y + O(\eps) =\rho_{k-j}(\kk)+O(\eps).
  \end{equation}
Since the probability of an adoption at all is $j\eps\gl+O(\eps^2)$,
we obtain
\begin{equation}
  \P(|\bpk(x)|=k\mid|\bpk'(x)|=j)=j\gl\rho_{k-j}(\kk)\eps+O(\eps^2).
\end{equation}

Consequently, for every $k\ge1$ and $x\in(0,1-\eps]$,
\begin{equation}
\rhok(\kk;x)
=(1-k\gl\eps)\rhok(\kk';x)
+\sum_{j=1}^{k-1}j\gl\rho_{k-j}(\gk)\rho_j(\kk';x)\eps
+O(\eps^2).
\end{equation}
(The implicit constant in $O$ here and below may depend on $k$ but not
on $x$ or $\eps$.)
Replace $x$ by $(1-\eps)x$ and observe that, by definition, 
$|\bpk'((1-\eps)x)|\eqd|\bpk(x)|$ and thus
$\rho_j(\kk';(1-\eps)x)=\rho_j(\kk;x)$. This yields
\begin{equation}
\rhok(\kk;(1-\eps)x)
=(1-k\gl\eps)\rhok(\kk;x)
+\sum_{j=1}^{k-1}j\gl\rho_{k-j}(\gk)\rho_j(\kk;x)\eps
+O(\eps^2).
\end{equation}
Letting $\eps\downto0$ we see first that $\rhok(\kk;x)$ is Lipschitz
continuous in $(0,1)$, and then that it is differentiable with
\begin{equation}
  \label{c1x}
x\ddx\rhok(\gk;x)
=k\gl\rhok(\gk;x)-\sum_{j=1}^{k-1}j\gl\rho_{k-j}(\gk)\rho_j(\gk;x),
\qquad k\ge1,
\end{equation}
which is \eqref{c1} in the present notation.

For $k=1$, \eqref{c1x} gives $\rhox1(\kk;x)=C x^\gl$, for some constant 
$C$. For $x=1$ we have $\gk(1,y)=0$, so
the branching process $\bpk(x)$ dies immediately, and $\rhox1(\kk;x)=1$.
Thus $\rhox1(\kk;x)=x^\gl$ as shown
in \eqref{crho1}.
For $k\ge2$, we note that $x\rhok(\kk;x)\to0$ as $x\to0$ or $x\to1$,
because $\rhok(\kk;x)\le1-\rho_1(\kk;x)=1-x^\gl$, and thus, integrating by
parts,
\begin{equation*}
  \intoi x\ddx \rhok(\kk;x)
=\bigsqpar{x\rhok(\kk;x)}_0^1 -\intoi\rho_k(\kk;x)\dd x = 0-\rhok(\kk).
\end{equation*}
Hence, integration of \eqref{c1x} yields the recursion formula
\begin{equation}
\label{c2}
(1+k\gl)\rhok(\gk)
=\sum_{j=1}^{k-1}j\gl\rho_{k-j}(\gk)\rho_j(\gk),
\qquad k\ge2.
\end{equation}

Replacing $j$ by $k-j$ in the \rhs{} of \eqref{c2} and summing the two
equations, we find that
\begin{equation}
\label{c2b}
2(1+k\gl)\rhok(\gk)
=\sum_{j=1}^{k-1}(j+k-j)\gl\rho_{k-j}(\gk)\rho_j(\gk),
\qquad k\ge2,
\end{equation}
which is \eqref{crho}.
\end{proof}

The susceptibility $\susq$ was calculated for all $\gl$ by \citet{CHKNS} 
using the recursion formula \eqref{crho}, see also Durrett
\cite{Durrett:CHKNS,Durrett}. We 
repeat their argument for completeness.

Let $G(z)\=\sumk\rhok(\gl\kk)z^k$ be the probability generating
function of $\qbplk$, defined at least for $|z|\le1$. Note that 
in the supercritical case, $\qbplk$ is a defective random variable 
which may be $\infty$; we have
$G(1)=1-\P(\qbplk=\infty)=1-\rho(\glk)$. Further,
$G'(1)=\susq(\glk)\le\infty$.

The recursion \eqref{crho} yields, most easily from the version
\eqref{c2},
\begin{equation}\label{cg}
  G(z)+\gl zG'(z) = \gl zG'(z)G(z)+(1+\gl)\rhox1(\glk)z
=\gl zG'(z)G(z)+z,
\end{equation}
and thus
\begin{equation}
\label{G'}
  G'(z)
=\frac{z-G(z)}{\gl z(1-G(z))},
\qquad |z|<1.
\end{equation}
In the supercritical case, $G(1)<1$, and we can let $z\upto1$ in
\eqref{G'}, yielding $\susq(\glk)=G'(1)=1/\gl$.
(In the subcritical case, l'H\^opital's rule, or differentiation of
\eqref{cg},  yields a quadratic
equation for $G'(1)$, with \eqref{csub} as a solution; this is the
method by which \eqref{csub} was found in \cite{CHKNS}.)

Summarizing, we have rigorously verified the explicit formula by
\citet{CHKNS}: 
\begin{equation}\label{csus}
 \susq(\glk)
=
\begin{cases}
  \frac{1-\sqrt{1-4\gl}}{2\gl}, & \gl\le \frac14,
\\
\frac1{\gl}, & \gl>\frac14.
\end{cases}
\end{equation}
Note that there is a singularity at $\gl=1/4$ with a finite jump from
2 to 4, with infinite derivative on the left side and finite
derivative on the right side. 
It is striking that there is a simple
explicit formula for $\susq(\glk)=G'(1)$, while no formula is known
for $G(1)=1-\rho(\glk)$.
This is presumably related to the fact that $\susq(\glk)$ may be found 
by solving the linear equation \eqref{em}, whereas
$\rho(\glk)$ is related to the non-linear equation \eqref{phik}.
As $\gl=1/4+\eps\downto 1/4$,
$\rho(\glk)$ approaches 0 extremely rapidly, as 
$\exp\bigpar{-(\pi/2\sqrt2)\eps\qqw+O(\log\eps)}$
\cite{DMS-anomalous,kernels}; the behaviour at the singularity is thus
very different for $G(1)$ and $G'(1)$.

Note also that, by \eqref{suskkq},
the discontinuous function $\susq(\glk)$
is the pointwise sum of the analytic functions $k\rhokkk$.

\begin{remark}  
  We can obtain higher moments of the distribution $(\rho_k(\glk))_{k\ge1}$ 
of $\qbplk$ by
repeatedly differentiating the differential equation \eqref{G'}
for its probability generating function and then letting $z\upto1$.
In the supercritical case, this 
yields the moments of $\qbplk\ett{\qbplk<\infty}$
(or, equivalently, the moments of $\qbplk$ conditioned on
$\qbplk<\infty$); it follows that 
all these moments are finite, and
we can obtain explicit formulae for them one by one. For example,
with $\rho=\rho(\glk)$,
\begin{align}
  \E(\qbplk^2 ; |\bplk|<\infty)
&=G''(1)+G'(1) = \frac{1-\rho}{\gl\rho}+\frac1{\gl}
= \frac{1}{\gl\rho},
\label{x2}
\\
  \E(\qbplk^3 ; |\bplk|<\infty)
&=G'''(1)+3G''(1)+G'(1) 
= \frac{2}{\gl^2\rho^2}+ \frac{1}{\gl\rho}.
\label{x3}
\end{align}
It can be 
seen that for each $m\ge 1$,
as $\gl\downto\glc$, and thus
$\rho\to0$, we have 
\begin{equation}\label{moments}
 \E(\qbplk^m;|\bplk|<\infty)\sim c_m \rho^{1-m}
\end{equation}
for some constant $c_m>0$; we do not know any general formula for $c_m$.
For any $\gl>\glc=\tfrac14$ and $a,b>0$,
writing $\hx\=\qbplk\ett{\qbplk<\infty}$, from \eqref{csus} and \eqref{x2}--\eqref{x3} we obtain
\begin{align*}
  \E\Bigpar{\hx^2&;\hx\le \frac a\rho}
\le \frac{a}{\rho}\E\hx
= \frac{a}{\rho}\susq(\glk)
= \frac{a}{\gl\rho},
\\
  \E\Bigpar{\hx^2&;\hx\ge \frac b\rho}
\le \frac{\rho}{b}\E\hx^3
= \frac{2}{b\gl^2\rho}+\frac{1}{b\gl},
\end{align*}
and hence
\[
  \E\Bigpar{\hx^2;\frac a\rho\le\hx\le \frac b\rho}
\ge \frac{1}{\gl\rho}- \frac{a}{\gl\rho}-\frac{2}{b\gl^2\rho}-\frac{1}{b\gl}
= \frac{1}{\gl\rho}\Bigpar{1-a-\frac{2}{b\gl}-\frac{\rho}{b}}.
\]
Choosing, for example, $a=1/4$ and $b=32$, so $b\gl>8$, the
last quantity is at least $1/(3\gl\rho)>1.3/\rho$ if
$\gl$ is close to $\glc$, and thus, for such
$\gl$ at least,
\begin{align*}
  \P\Bigpar{\frac 1{4\rho}\le\qbplk\le \frac{32}{\rho}}
\ge \frac{1.3}{\rho}\Bigparfrac{\rho}{b}^2>\frac{\rho}{1000}.
\end{align*}
Hence, $\qbplk$ may be as large as about $\rho\qw$ with
probability about $\rho$, as suggested by \eqref{moments}.

Note that each $\rho_k(\glk)$ is a
continuous function of $\gl$, so as $\gl\downto\glc$, 
the (defective) distribution of $\qbplk$ converges to the distribution of the
critical $|\bpglck|$, which has mean $\sus(\glc\kk)=2$ and 
$\P(|\bpglck|=k)\sim 2/(k^2\log k)$ as $k\to\infty$, see 
\cite[Section 7.3]{Durrett}.

In the subcritical case, $\rho_k(\glk)$ decreases as a power of $k$,
see \cite[Section 7.3]{Durrett} for details.
\end{remark}

We have so far studied $\sus(\glk)$ and $\susq(\glk)$, or, equivalently,
the cluster size in the branching process $\bplk$. 
Let us now return to the random graphs; we then
have to be careful with the precise definitions.
The Poisson version of the CHKNS model mentioned above can be
described as the random multigraph where the number of edges between vertices
$i$ and $j$ is $\Po(\glij)$ with intensity
$\glij\=\gl(1/(j-1)-1/n)$, for $1\le i<j\le n$, 
independently for all such pairs $i,j$, see
\cite{Durrett:CHKNS,Durrett,kernels}. 
For the moment, let us call this random graph $\gin$. Let
$\giin$ be defined similarly, but with 
$\glij\=\gl(1/j-1/n)$, and let 
$\giiin$ be defined similarly with
$\glij\=\gl(1/j-1/(n+1))$, for $1\le i<j\le n$.
Since multiple edges do not matter for the components, we may as well
consider the corresponding simple graphs with multiple edges
coalesced; then the probability of an edge between $i$ and $j$, $i<j$,
is $\pij:=1-\exp(-\glij)$.
(If, for simplicity, we consider $\gl\le1$ only,
it is easy to see that the results below hold also if we instead
let the edges appear with probabilities $\pij=\glij$; this follows by
the same arguments or by contiguity and 
\cite[Corollary 2.12(iii)]{SJ212}.)

We first consider $\giin$; note that this is exactly (the
Poisson version of)
$\gnxx{\glk}$ with $\kk$ defined in \eqref{chkns} and the vertex space
$\vxs$ given by $\sss=(0,1]$ with $\mu$ Lebesgue measure as above, and
the deterministic sequence 
$\xs=(x_1,\ldots,x_n)$ with $x_i=i/n$.  
Arguing as in the proof of \refT{Tbounded},
summing
over distinct indices only, and using the fact that $\kk$ is
non-increasing in each variable, we find that 
the expected number $\E P_\ell(\giin)$ of paths of length $\ell$ is
  \begin{align*}
\E P_\ell(\giin) &\le
\sum_{j_0,\dots,j_\ell=1}^n
\prod_{i=1}^\ell \frac{\glk(j_{i-1},j_{i})}n
\\
&\le
\sum_{j_0,\dots,j_\ell=1}^n
n\int_{\prod_i((j_i-1)/n,j_i/n]}\prod_{i=1}^\ell \glk(x_{i-1},x_{i})
\dd x_0\dotsm\dd x_{\ell}
\\
&\le
n\int_{\sss^{\ell+1}}\prod_{i=1}^\ell \glk(x_{i-1},x_{i})
\dd x_0\dotsm\dd x_{\ell}
=n\innprod{\tlk^\ell1,1}.
  \end{align*}
Hence Lemmas \refand{Lpaths}{Ltest} imply that \eqref{tiid} holds and
$\sus(\giin)\pto\sus(\glk)$.

For $\giiin$, we observe that $\giiin$ can be seen as an induced
subgraph of $\gii{n+1}$, and thus 
\begin{equation}\label{piii}
  \E \sum_\ell P_\ell(\giiin)
\le
  \E \sum_\ell P_\ell(\gii{n+1})
\le(n+1)\sus(\glk).
\end{equation}
Hence \refL{Ltest} implies that $\sus(\giiin)\pto\sus(\glk)$.

Finally, it is easily checked that $\gin$ and $\giiin$ satisfy the
conditions of \cite[Corollary 2.12(iii)]{SJ212}, and thus are
contiguous.
Hence $\sus(\gin)\pto\sus(\glk)$ too.
(One can also compare $\gin$ and $\giin$ as in~\cite[Lemma 11]{SJ163}.)

It turns out that in probability bounds such as the one we have just proved
do not obviously transfer from $\gin$ to the original CHKNS model.
On the other hand (as we shall see below), bounds on the expected
number of paths do. Hence,
in order to analyze the original CHKNS model, we shall need
to show that
\begin{equation}\label{es_chkns}
 \limsup \E n^{-1} \sum_\ell P_\ell(\gin) \le \sus(\gl\ka).
\end{equation}
If $\gl>1/4$, then $\gl\ka$ supercritical, so $\sus(\gl\ka)=\infty$ and there is nothing to prove.
Suppose then that $\gl\le 1/4$.
We may regard $\gin$ with the vertex $1$ deleted as $\giii{n-1}$.
Writing $P(G)$ for the total number of paths in a graph $G$,
and $P^*$ for the number involving the vertex $1$,
by \eqref{piii} we thus have 
\[
 \E P(\gin)-\E P^*(\gin)= \E P(\giii{n-1}) \le n\sus(\glk),
\]
so to prove \eqref{es_chkns} it suffices to show that $\E P^*(\gin)=o(n)$.

Let $S(\gin)$ denote the number of paths in $\gin$ starting at vertex 1. 
Since a path visiting vertex 1 may be viewed as the edge disjoint
union of two paths starting there, and edges are present independently,
we have $\E P^*(\gin) \le (\E S(\gin) )^2$.
Now $\E S(\gin)$ is given by 1 plus the sum over $i$ of $1/i$ times
the expected number of paths in $\giii{n-1}$ starting at vertex $i$.
Durrett~\cite[Theorem 6]{Durrett:CHKNS} proved
the upper bound
\[
 \frac{3}{8}\frac{1}{\sqrt{ij}} \frac{(\log i+2)(\log n-\log j+2)}{\log n+4}
\]
on the expected number of paths between vertices $i$ and $j$ in the
graph $H$ on $[n]$ in which edges are present independently
and the probability of an edge $ij$, $i<j$, is $1/(4j)$ (a form
of Dubin's model; see the next section).
In fact, his result is stated for the probability that
a path is present, but the proof bounds the expected number of paths.
(The factor $1/\sqrt{ij}$ is omitted in~\cite[Theorem 6]{Durrett:CHKNS}; 
this is
simply a typographical error.)
This bound carries over to $\giii{n-1}$,
which we may regard as a subgraph of $H$. Multiplying by $1/i$
and summing,
a little calculation shows that this bound implies that
$\E S(\gin)=O(n^{1/2}/\log n)$ for $\gl=1/4$, and hence
for any $\gl\le 1/4$.
 From the comments above,
\eqref{es_chkns} follows, and for any $\gl>0$ we have
$\sus(\gin)\pto \sus(\glk)$.

Recall that the original CHKNS model $G_n$ has the same expected edge 
densities as $\gin$, but the mode of addition is slightly different,
with 0 or 1 edges added at each step, rather than a Poisson number;
this introduces some dependence between
edges. However, as noted in~\cite{SJ163}, the form of this
dependence is such that conditioning on a certain set
of edges being present can only reduce the probability
that another given edge is present.
Thus, any given path is at most as likely in $G_n$ as in $\gin$,
and \eqref{es_chkns} carries over to the CHKNS model.
On the other hand,
the effect of this dependence is small except for the first
few vertices, and it is easy to see that $N_k(G_n)$
has almost the same distribution as $N_k(\gin)$. 
In particular, $N_k(G_n)/n\pto \rho_k(\gl\kk)$, so
the proof of \refT{Tlower} goes though. Using \refL{LA1}
it follows that $\sus(G_n)\pto \sus(\gl\kk)$.

Turning to the supercritical case, let $M_k(G)$ denote the number
of components of a graph $G$, other than $\cc1$, that have order $k$.
We claim that, in all variants $\gin$, $\giin$, $\giiin$ or the original
CHKNS model, for fixed $\gl>\glc$
there is some sequence of events $\cE_n$ that holds whp,
and some $\eta>0$ such that
\begin{equation}\label{tail}
 n^{-1} \E ( M_k(G_n)\mid \cE_n) \le 100 e^{-\eta k^{1/5}},
\end{equation}
say, for all $n,k\ge 1$.
Suppose for the moment that \eqref{tail} holds.
Then
\[
 \E \susq(G_n\mid \cE_n) = n^{-1} \sum_{k\ge 1} k^2 \E (M_k(G_n)\mid\cE_n) \le \sum_k 100 k^2 e^{-\eta k^{1/5}} <\infty.
\]
For each fixed $k$ we have $n^{-1}\E k^2M_k(G_n) = n^{-1}\E (k\nk(G_n)-O(k)) \to k\rho_k(\glk)$.
Since $\cE_n$ holds whp and $n^{-1}k^2M_k(G_n)$ is bounded it follows that
$n^{-1} k^2 \E (M_k(G_n)\mid \cE_n)\to k\rho_k(\glk)$.
Hence, by dominated convergence,
$\E (\susq(G_n)\mid\cE_n) \to \sum k\rho_k(\glk) = \susq(\glk)$, and (which we know already in this case),
$\susq(\glk)$ is finite.
By \refL{Ltestcond}(ii),
it then follows that $\susq(G_n)\pto\susq(\glk)$.

To prove \eqref{tail} we use an idea from \cite{SJ163}; with an eye to
the next subsection, in the proof we shall not rely on the exact values
of the edge probabilities, only on certain bounds.
Fix $\gl>\glc$. Choosing $\eta$ small, in proving \eqref{tail} we may
and shall assume that $k$ is at least some constant that may depend on $\gl$.
Set $\delta=k^{-1/100}$,
and let $G_n'$ be the subgraph of $G_n$ induced
by the first $n'=(1-\delta)n$ vertices. (We ignore the irrelevant rounding
to integers.)
In all variants $\gin$, $\giin$, $\giiin$,
the distribution of $G_n'$ stochastically dominates that of $G_{n'}$,
so whp $G_n'$ contains a component $C$ of order at least $3\rho(\glk)n'/4\ge \rho(\glk)n/2$.
Let us condition on $G_n'$, assuming that this holds. Note that whp the largest
component of $G_n$ will contain $C$, so it suffices to bound the expectation
of $M_k'$, the number of $k$-vertex components of $G_n$ not containing $C$.
To adapt what follows to the original CHKNS model, we should instead condition
on the edges added by time $n'$ as the graph grows; we omit the details.

Suppose that $C'$ is a component of $G_n'$ other than $C$.
Consider some vertex $v$, $n'< v\le (1-\delta/2)n$.
Then $v$ has probability at least $\gl(1/v-1/n)\ge \gl\delta/(2n)\ge \delta/(8n)$
of sending
an edge to any given vertex, and hence probability at least $\delta|S|/(9n)$
of sending at least one edge to any given set $S$ of vertices.
Hence with probability at least $\delta^2\rho(\glk)|C'|/(200n)$, $v$
sends an edge to both $C$ and $C'$. Since these events are independent for
different $v$, the probability that $C'$ is not part of the same component
of $G_n$ as $C$ is at most
\[
 \bigpar{1-\delta^2\rho(\glk)|C'|/(200n)}^{\delta n/2} \le \exp\bigpar{-\delta^3\rho(\glk)|C'|/400}
 = \exp(-a\delta^3|C'|),
\]
for some $a>0$ independent of $k$.

Let $A$ be the number of components of $G_n'$ of size at least $k^{1/4}$
that are not joined to $C$ in $G_n$. Then it follows that $\E A\le ne^{-a k^{1/5}}$.

For any $v\le n'$, the expected number of edges from `late' vertices $w> n'$
to $v$ is at most $1/2$, say. (We may assume $\delta$ is small if $\gl$ is large.)
Let $B$ be the number of vertices receiving at least $k^{1/4}$ edges from late
vertices. Then it is easy to check
(using a Chernoff bound or directly)
that $\E B\le ne^{-bk^{1/4}}$ for some $b>0$.
The subgraph of $G_n$ induced by the late vertices is dominated by an
Erd\H os--R\'enyi random graph with average degree at most $1/2$.
Let $N$ be the number of components of this subgraph with size at least
$k^{1/4}$. Then, since the component exploration process is dominated by
a subcritical branching process, we have
$\E N\le ne^{-ck^{1/4}}$ for some $c>0$.

Let $M_k''$ be the number of $k$-vertex components of $G_n$ other than that
containing $C$ that do not contain any of the components/vertices
counted by $A$, $B$ or $N$. Since $\E(M_k'-M_k'') \le \E(A+B+N) \le ne^{-dk^{1/5}}$ for some $d>0$,
it suffices to bound $\E M_k''$. Condition on $G_n'$ and explore from some vertex not in $C$.
To uncover a component counted by $M_k''$, this exploration must cross from late to early vertices
at least $k^{1/4}$ times -- each time we reach a component of size at most $k^{1/4}$,
and from each of these vertices we get back to at most $k^{1/4}$ late vertices,
and from each of those to at most $k^{1/4}$ other late vertices before
we next cross over to early vertices.
However, every time we find an edge from a late to an early vertex
(conditioning on the presence of such an edge but not its destination early vertex), we have
probability at least $\rho(\glk)/2$ of hitting $C$. It follows that
$\E M_k''\le n (1-\rho(\glk)/2)^{k^{1/4}}$, and \eqref{tail} follows.

Note that since $\susq(\glk)$ is a discontinuous function of $\gl$,
we cannot obtain convergence to $\susq(\glk)$
for an arbitrary sequence $\gl_n\to\gl$, as in \refT{Tbounded} and
\refSS{SSER}. In fact, it follows easily from \refT{Tlower} that if
$\gl_n\downto \glc$ slowly enough, then
$\sus(\gnxx{\gl_n\kk})\pto\infty>\sus(\glc\kk)$ and 
$\susq(\gnxx{\gl_n\kk})>\lim_{\gl\downto\glc}\susq(\glk)-\eps
=4-\eps>\susq(\glc\kk)$ 
\whp{} for every $\eps\in(0,2)$,
for any vertex space $\vxs$ (with $\sss$ and $\mu$ as above), and thus
in particular for $\giin$.

\subsection{Dubins' model}\label{SSDubins} 

A random graph closely related to the CHKNS model is 
the graph $\gnxx{\glk}$ with kernel
\begin{align}
\label{dubins}
\kk(x,y)
\=\frac1{x\bmax y}
\end{align}
on $\sss=(0,1]$, 
where the vertex space $\vxs$ is as in \refSS{SSCHKNS}, so
$\xs=(x_1,\ldots,x_n)$.
In this case, the probability $\pij$ of an edge between $i$ and $j$ is
given (for $\gl\le1$) by
$\pij=\glk(i/n,j/n)/n =\gl/(i\bmax j)$. Note that this is independent
of $n$, so we may regard $\gnxx{\glk}$ as an induced subgraph of an
infinite random graph with vertex set $\bbN$ and these edge
probabilities, with independent edges.

This infinite random graph was introduced
by Dubins, who asked when it is a.s.\
connected. 
Shepp \cite{Shepp} 
proved that this holds if and only if $\gl>1/4$.
The finite random graph $\gnxx\glk$ 
was studied by \citet{Durrett:CHKNS,Durrett}, 
who showed that  $\glc=1/4$; thus the 
critical value for the emergence of a giant component in the
finite version coincides with the critical value for
connectedness of the infinite version.
See also \cite{SJ163,Rsmall,kernels}.

We  have
\begin{equation}
  \label{tkdubins}
\tlk f(x)
=
\frac\gl x\int_0^x f(y)\dd y 
+ \gl\int_x^1\frac{ f(y)}y\dd y.
\end{equation}

We can solve \eqref{em} as in \refSS{SSCHKNS}; we get the same
equation \eqref{julie} and thus the same solutions 
$f(x)=C_+x^{\ga_+-1} + C_- x^{\ga_--1}$ (unless $\gl=1/4$ when we also
get a logarithmic term), and substitution into \eqref{tkdubins} shows that
this is a solution of \eqref{em} if and only if $C_+\ga_+
+C_-\ga_-=1$, see \eqref{dxa} below.
If $0<\gl<1/4$, so $\ga_+>1/2>\ga_-$,
there is thus a positive solution
$f(x)=\ga_+\qw x^{\ga_+-1}$ in $L^2$.
(This is the unique solution in $L^2$, by a direct check or by \refC{Ceq}.)
Hence, \refC{Ceq} yields
\begin{equation}
  \sus(\glk)=\intoi f(x)\dd x=\ga_+\qww 
=\frac{1-2\gl-\sqrt{1-4\gl}}{2\gl^2},
\qquad 0<\gl<1/4.
\end{equation}
Since this function is analytic on $(0,1/4)$ but
has a singularity at $\gl=1/4$ (although it remains finite there), 
\refT{TB} shows that $\glc=1/4$, which gives a new proof of this
result by \citet{Durrett:CHKNS}.
Note that $\sus(\glc\kk)=4$ is finite.

We can estimate the expected number of paths as in \refSS{SSCHKNS}, and
show by Lemmas \refand{Lpaths}{Ltest} that 
$\sus(\gnxx\glk)\pto\sus(\glk)$ for any $\gl>0$.

In the supercritical case, the tail bound \eqref{tail} goes through, showing 
that for any $\gl>\glc$ we have $\susq(\glk)<\infty$, and $\susq(\gnxx\glk)\pto \susq(\glk)$.
Unfortunately, while the argument gives a tail bound on the sum $\sum_k k\rho_k(\glk)$
for each fixed $\gl>\glc$, the dependence on $\gl$ is rather bad, so it does
not seem to tell us anything about the behaviour of $\susq(\glk)$ as $\gl$
approaches the critical point.

We can easily calculate $\rho_k$ for small $k$.
First, by \eqref{tkdubins}, $\tlk1(x)=\gl-\gl\log x$.
Hence \eqref{rho1} yields
\begin{equation}
  \label{drho1}
\rhox1(\glk;x)=e^{-\gl+\gl\log x} = e^{-\gl}x^\gl.
\end{equation}
Further, instead of \eqref{sjw} we now have, for every non-zero
$\gam>-1$,
\begin{equation}\label{dxa}
  \tlk (x^\gam) =
\frac{\gl}{\gam}-\frac{\gl}{\gam(\gam+1)}x^\gam.
\end{equation}
Hence \eqref{rho2} yields
\begin{equation}
  \label{drho2}
\rhox2(\glk;x)=e^{-\gl} x^\gl\tlk(e^{-\gl}x^\gl)
=e^{-2\gl} x^\gl\Bigpar{1-\frac{x^\gl}{\gl+1}}.
\end{equation}
Similarly, by \eqref{rho3} and some calculations,
\begin{multline*}
  \label{drho3}
\rhox3(\glk;x)
=\frac{e^{-3\gl}}{2(1+\gl)^2(1+2\gl)}\\
\Bigpar{(2+3\gl)x^{3\gl}-4(1+2\gl)(1+\gl)x^{2\gl}+(2+3\gl)(1+2\gl)(1+\gl)x^\gl}
,\end{multline*}
and so on.
By integration we then obtain
\begin{align}
  \rhox1(\glk)&=\frac{e^{-\gl}}{1+\gl},
\\
  \rhox2(\glk)&=\frac{2\gl e^{-2\gl}}{(1+\gl)(1+2\gl)},
\\
  \rhox3(\glk)&=\frac{(15\gl^2+18\gl^3)e^{-3\gl}}
{2(1+\gl)^2(1+2\gl)(1+3\gl)}.
\end{align}

It is clear that each $\rho_k(\glk)$ is $e^{-k\gl}$ times a rational
funtion of $\gl$, but we do not know any general formula or a
recursion that enables us to calculate $\susq(\glk)$ in the
supercritical case as in \refSS{SSCHKNS}.

\subsection{Functions of $\max\set{x,y}$}\label{SSmax}

The examples in Sections \refand{SSCHKNS}{SSDubins} are both of the
type $\kk(x,y)=\phi(x\bmax y)$ for some function $\phi$ on $(0,1]$.
It is known that if, for example, $\phi(x)=O(1/x)$,
then $\tk$ is bounded on $L^2$, and thus there exists a
positive $\glc>0$; see 
\cite{MV,SJ139} and \cite[Section 16.6]{kernels}.

We have 
\begin{equation}
  \tlk f(x)=\gl\phi(x)\int_0^x f(y)\dd y + \gl\int_x^1\phi(y) f(y)\dd y.
\end{equation}
If $\phi\in C^1(0,1]$, then any integrable solution of
  \eqref{em} must be in $C^1(0,1]$ too, and differentiation yields
$f' = \gl \phi' F$, where $F(x)\=\int_0^x f(y)\dd y$ is the primitive
  function of $f$;
furthermore, we have $f(1)=1+\tlk f(1)=1+\gl\phi(1) F(1)$.
Hence, solving \eqref{em} is equivalent to solving the
Sturm--Liouville problem
\begin{equation}\label{sl}
  F''(x) = \gl\phi'(x) F(x)
\end{equation}
with the boundary conditions 
\begin{equation}\label{sl2}
F(0)=0\qquad \text{and}  \qquad F'(1)=\gl\phi(1) F(1)+1.
\end{equation}
If there is a solution 
to \eqref{sl} and \eqref{sl2}
with $F'\ge0$ and $F'\in L^2$, then \refC{Ceq}
shows that
\begin{equation}\label{slsus}
  \sus(\glk)=\intoi F'(x)\dd x = F(1).
\end{equation}

The examples in Sections \refand{SSCHKNS}{SSDubins} are examples of
this, as is the \ER{} case in \refSS{SSER} ($\phi=1$).
We consider one more simple explicit example.

\begin{example}
Let $\phi(x)=1-x$. Then \eqref{sl} becomes $F''=-\gl F$, with
the solution, using \eqref{sl2},  $F(x)=A\sin(\glq x)$ with 
$A\glq\cos(\glq)=1$. This solution satisfies $F'\ge0$ if $\glq<\pi/2$,
so we find $\glc=\pi^2/4$ and, by \eqref{slsus},
\begin{equation}
  \sus(\glk) = \frac{\tan(\glq)}{\glq},
\qquad \gl<\glc=\pi^2/4.
\end{equation}
\end{example}

\subsection{Further examples}

We give also a couple of counterexamples.

\begin{example}
  \label{E2}
Let $\sss=\set{1,2}$, with $\mu\set1=\mu\set2=1/2$, and let
$\kke(1,1)=2$, $\kke(2,2)=1$ and $\kke(1,2)=\kke(2,1)=\eps$ for
$\eps\ge0$.

For $\eps=0$, $\kk_0$ is reducible; given the numbers $n_1$ and $n_2$
of vertices of the two types, the random graph $\gnxx{\gl\kko}$
consists of
two disjoint independent random graphs $G(n_1,2\gl/n)$ and
$G(n_2,\gl/n)$; since $n_1/n,n_2/n\pto 1/2$, the first part has a threshold
at $\gl=1$ and the second a threshold at $\gl=2$. Similarly, the
branching process $\bpxx{\gl\kko}$ is a single-type Galton--Watson process
with offspring distribution $\Po(\gl)$ if $x=1$ and $\Po(\gl/2)$ if
$x=2$, so $\bpx{\gl\kko}$ is a mixture of these. Hence, if
$\susqer(\gl)$ denotes 
the (modified) susceptibility in the \ER{} case, given by
\eqref{suser} for $\gl<1$ and \eqref{susqer} for $\gl\ge1$, then
\begin{equation}
 \susq(\gl\kko)=\tfrac12 \susqer(\gl)+\tfrac12\susqer(\gl/2),
\end{equation}
so $\susq(\gl\kko)$ has two singularities, at $\gl=1$ and $\gl=2$.
Clearly, $\glc=1$.

Now consider $\eps>0$ and let $\eps\downto0$. 
Then
$\glc(\kke)\le\glc(\kko)=1$.
Furthermore, 
for any fixed $\gl$,
$\rho(\gl\kke,x)\to\rho(\gl\kko,x)$ 
by \cite[Theorem 6.4(ii)]{kernels}, and hence 
$\Txq{\gl\kke}\to\Txq{\gl\kko}$ (we may regard the operators as
$2\times2$ matrices). Consequently, if $\gl>1$ with $\gl\neq2$ and thus
$\norm{\Txq{\gl\kko}}<1$, then
$(I-\Txq{\gl\kke})\qw\to(I-\Txq{\gl\kko})\qw$, and thus
$\susq(\gl\kke)\to\susq(\gl\kko)$ by \refT{TBP1}. 
This holds for $\gl=2$ also, with the limit $\susq(2\kko)=\infty$, for
example by \eqref{bp1d} and Fatou's lemma.

Since $\susq(\gl\kko)$ has singularities both at 1 and 2, we may
choose $\gd\in(0,1/2)$
such that $\susq((1+\gd)\kko)>\susq(\frac32\kko)$ and
$\susq((2-\gd)\kko)>\susq(\frac32\kko)$, 
and then choose $\eps>0$ such that
$\susq((1+\gd)\kke)>\susq(\frac32\kke)$ and
$\susq((2-\gd)\kke)>\susq(\frac32\kke)$. This yields an example of an
irreducible kernel $\kk$ such that $\susq(\glk)$ is not monotone
decreasing on $(\glc,\infty)$.
\end{example}

\begin{example}
  \label{Ebad}
\refT{Tbounded} shows convergence of $\sus(\gnkx)$ to $\sus(\kk)$ for
any vertex space $\vxs$ when $\kk$ is bounded. For unbounded $\kk$, 
some restriction on the vertex space is necessary. (Cf.\
\refT{Tiid} with a very strong condition on $\vxs$ and none on $\kk$.) 
The reason is that our
conditions on $\vxs$ are weak and do not notice sets of vertices of
order $o(n)$, but such sets can mess up $\sus$.

In fact, assume that $\kk$ is unbounded.
For each $n\ge16$, find $(a_n,b_n)\in\sss^2$ with $\kk(a_n,b_n)>n$.
Define $\xs$ by taking $\floor{n^{3/4}}$ points $x_i=a_n$, 
$\floor{n^{3/4}}$ points $x_i=b_n$, and the remaining
$n-2\floor{n^{3/4}}$ points  \iid{} 
at random with distribution $\mu$.
It is easily seen that this yields a vertex space $\vxs$, and that we have
created a component with at least $2\floor{n^{3/4}}$ vertices.
Consequently, $\ccc1>n^{3/4}$, and by \eqref{sus}, 
$\sus(\gnkx)\ge\ccc1^2/n>n\qq$,
so $\sus(\gnkx)\to\infty$, even if $\kk$ is subcritical and thus
$\sus(\kk)<\infty$.

Using a similar construction (but this time for more specific kernels $\kk$),
it is easy to give examples of unbounded supercritical $\kk$
with $\susq(\kk)<\infty$ but $\susq(\gnkx)\to\infty$ for suitable vertex spaces
$\vxs$.
\end{example}

\newcommand\AAP{\emph{Adv. Appl. Probab.} }
\newcommand\JAP{\emph{J. Appl. Probab.} }
\newcommand\JAMS{\emph{J. \AMS} }
\newcommand\MAMS{\emph{Memoirs \AMS} }
\newcommand\PAMS{\emph{Proc. \AMS} }
\newcommand\TAMS{\emph{Trans. \AMS} }
\newcommand\AnnMS{\emph{Ann. Math. Statist.} }
\newcommand\AnnPr{\emph{Ann. Probab.} }
\newcommand\CPC{\emph{Combin. Probab. Comput.} }
\newcommand\JMAA{\emph{J. Math. Anal. Appl.} }
\newcommand\RSA{\emph{Random Struct. Alg.} }
\newcommand\ZW{\emph{Z. Wahrsch. Verw. Gebiete} }
\newcommand\DMTCS{\jour{Discr. Math. Theor. Comput. Sci.} }

\newcommand\AMS{Amer. Math. Soc.}
\newcommand\Springer{Springer-Verlag}
\newcommand\Wiley{Wiley}

\newcommand\vol{\textbf}
\newcommand\jour{\emph}
\newcommand\book{\emph}
\newcommand\inbook{\emph}
\def\no#1#2,{\unskip#2, no. #1,} 
\newcommand\toappear{\unskip, to appear}

\newcommand\webcite[1]{
\texttt{\def~{{\tiny$\sim$}}#1}\hfill\hfill}
\newcommand\webcitesvante{\webcite{http://www.math.uu.se/~svante/papers/}}
\newcommand\arxiv[1]{\webcite{arXiv:#1.}}

\def\nobibitem#1\par{}

\end{document}